\documentclass[reqno,centertags,11pt,a4paper]{amsart}
\usepackage{amssymb,mathrsfs}
\usepackage{color,umoline}
\usepackage[dvipsnames]{xcolor}
\usepackage{graphicx}
\usepackage{enumitem}
\usepackage[font=small,labelfont=bf]{caption}
\usepackage{tikz,subcaption}
\usepackage{relsize}
\usepackage[mathscr]{eucal}
\usepackage{verbatim}
\usepackage[draft]{todonotes}
\usepackage[
  colorlinks=true,
  linkcolor=blue,
  citecolor=blue
]{hyperref}
\usetikzlibrary{patterns}
\tikzstyle{EDR}=[draw=lightgray,line width=0pt,preaction={clip, postaction={pattern=north east lines, pattern color=gray}}]
\tikzstyle{EDR1}=[draw=lightgray,line width=0pt,preaction={clip, postaction={pattern=north west lines, pattern color=gray}}]

\definecolor{mygray}{gray}{0.95}

\definecolor{mypink1}{rgb}{1.2,1.1,0.9}

\definecolor{mypink2}{rgb}{1.0,0.95 ,0.9}

\definecolor{mypink3}{rgb}{1.0,0.6,0.7}

\numberwithin{equation}{section}

\newtheorem{theorem}{Theorem}[section]

\newtheorem{lemma}[theorem]{Lemma}

\newtheorem{proposition}[theorem]{Proposition}
\newtheorem{remark}[theorem]{Remark}

\newcommand{\A}{\mathcal A}
\newcommand{\Pcal}{\mathcal P}
\newcommand{\M}{\mathcal M}

\newcommand{\Aalg}{\mathcal A_{\mathbb Z}(\mathcal M)}

\newcommand{\Torus}{\mathbb T}
\newcommand{\Z}{\mathbb Z}
\newcommand{\N}{\mathcal N}
\newcommand{\Q}{\mathbb Q}
\newcommand{\R}{\mathbb R}
\newcommand{\F}{\mathcal F}

\newcommand{\supp}{\operatorname{supp}}
\newcommand{\tr}{\tau}
\newcommand{\one}{\mathbf 1}
\newcommand{\E}{\mathbb E}
\newcommand{\id}{\operatorname{id}}

\newcommand{\beq}{\begin{equation}}
	\newcommand{\eeq}{\end{equation}}
\newcommand{\beqq}{\begin{equation*}}
	\newcommand{\eeqq}{\end{equation*}}
\newcommand{\ben}{\begin{eqnarray}}
	\newcommand{\een}{\end{eqnarray}}
\newcommand{\beno}{\begin{eqnarray*}}
	\newcommand{\eeno}{\end{eqnarray*}}

\begin{document}
\setcounter{page}{1}
\title[Noncommutative maximal inequalities for polynomial ergodic averages]
{Noncommutative maximal inequalities for polynomial ergodic averages}

\author[]{Guixiang Hong}
\address{
Institute for Advanced Study in Mathematics\\
Harbin Institute of Technology\\
Harbin
150001\\
China}
\email{gxhong@hit.edu.cn}

\author[]{Wenbo Li}
\address{
Institute for Advanced Study in Mathematics\\
Harbin Institute of Technology\\
Harbin
150001\\
China}
\email{lwbmath@hit.edu.cn}

\author[]{\'{E}ric Ricard}
\address{UNICAEN\\ CNRS\\ LMNO\\ 14000 Caen\\France}
\email{eric.ricard@unicaen.fr}

\author[]{Liang Wang}
\address{Department of Mathematics\\ City University of Hong Kong\\Hong Kong SAR}
\email{L.Wang@cityu.edu.hk}

\thanks{}

\subjclass[2010]{Primary  46L51; Secondary 42B20}
\keywords{Noncommutative $L_p$-spaces, noncommutative maximal ergodic inequalities, polynomial ergodic averages,
noncommutative Stein extrapolation, noncommutative Borel-Cantelli argument, noncommutative multilinear Doob's inequality}

\date{\today}

\begin{abstract}
We prove a noncommutative maximal ergodic inequality for averages
along polynomial sequences. Let $\gamma$ be a
trace-preserving automorphism of a semifinite von Neumann algebra $(\mathcal N,\tau)$.
We show that the associated polynomial averages
\begin{equation*}
 A_Nf:=\frac1N\sum_{n=1}^N\gamma^{P(n)}(f),
 \qquad N\in\mathbb N,
\end{equation*}
satisfy a strong
maximal inequality on $L_p(\mathcal N)$ for every $1<p<\infty$, extending the
previously known restricted range of $p$.

The proof follows Bourgain's major-arc strategy but requires substantially new ideas and tools for operators that may have further applications in noncommutative analysis. More precisely, we obtain a localized maximal inequality by developing a noncommutative version of Stein's extrapolation, novel combinatorial methods, and a surprising multilinear version of Doob's maximal inequality. For the required decaying $L_2$-approximation, we combine two of Bourgain's constructions in a way that avoids the multi-frequency maximal inequality used in the scalar proof.
\end{abstract}

\maketitle
\setcounter{tocdepth}{2}
\tableofcontents

\section{Introduction and main result}

Motivated in part by quantum physics, noncommutative analysis has
developed rapidly. Noncommutative ergodic theory, an important part
of this subject, dates back to the early study of ``rings of
operators''. Initially, the focus was mainly on mean ergodic
theorems. Lance \cite{La} initiated the study of individual ergodic
theorems by establishing a noncommutative analogue of Birkhoff's
ergodic theorem for bounded functions. This was followed by a
series of further results; see, for example, \cite{CoN,Ku}.
Thirty years after Yeadon's weak type $(1,1)$ maximal ergodic
inequality \cite{Ye}, Junge and Xu \cite{JX07} established
noncommutative Birkhoff's ergodic theorems on $L_p$-spaces. Their approach
drew on the Pisier/Junge theory of vector-valued noncommutative $L_p$-spaces \cite{Pisier+Asterique98}, operator space theory, and
noncommutative Doob's maximal inequalities \cite{Junge+Crelle2002}.
Their work has inspired many subsequent studies of noncommutative
maximal inequalities and ergodic theorems; see, for example,
\cite{CAD,Be,HRW,Hu,LS,Ho,HoSu18,LeXu12,HLW,CW,Cadilhac,
CadilhacRicard+PM2025,HongRicard+PTRF2026}.
Further background can be found in the introductions of \cite{HoSu18,CHW24} and the
references therein.
 
Despite these advances, progress on noncommutative analogues of
Bourgain's generalizations of Birkhoff's ergodic theorems from the whole natural numbers to arithmetic subsets
\cite{Bo881,Bo882,Bo89} has been made only recently. Chen, Hong,
and Wang \cite{CHW24} initiated this study and obtained a maximal
ergodic inequality for averages along powers in the range
\begin{equation}\label{restricted p}
 p>\frac{1+\sqrt5}{2}.
\end{equation}
For averages along the primes, Hong and Wang
\cite{HongWang+arXiv2026} subsequently removed this restriction
and established bilateral almost uniform convergence. A related
result for random ergodic averages can be found in
\cite{LeMerdyZadeh+arXiv2026}.
In the present paper, we remove the restriction
\eqref{restricted p} for all integer  coefficients polynomials and thus establish
a full noncommutative analogue of Bourgain's seminal theory.

Let $\mathcal N$ be a von Neumann algebra equipped with a normal
faithful semifinite trace $\tau$, and let $\gamma$ be a
trace-preserving automorphism of $\mathcal N$, that is, $\tau\circ\gamma=\tau$.

We denote by $L_p(\mathcal N)$ the noncommutative $L_p$-space
associated with $(\mathcal N,\tau)$. It is well known that
$\gamma$ extends to an isometry on $L_p(\mathcal N)$ for every
$1\leq p\leq\infty$. A detailed definition of noncommutative $L_p$-space as well as the vector-valued spaces $L_p(\mathcal N; \ell_\infty)$ and $L_p(\mathcal N; \ell_1)$ will be recalled in next section.

Let
\[
 P(n)=\sum_{i=1}^d b_i n^i
\]
be a polynomial with integer coefficients and $b_d>0$. We consider the associated ergodic averages
\begin{equation*}
 A_Nf:=\frac1N\sum_{n=1}^N\gamma^{P(n)}(f),
 \qquad N\in\mathbb N.
\end{equation*}

\begin{theorem}\label{thm1}
Let $1<p<\infty$. Then
\begin{equation}\label{mthm1}
 \big\|(A_Nf)_{N\in\mathbb N}\big\|_
 {L_p(\mathcal N;\ell_\infty)}
 \lesssim_{p,P} \|f\|_{L_p(\mathcal N)}
\end{equation}
for every $f\in L_p(\mathcal N)$. Moreover, $A_Nf$ converges
bilaterally almost uniformly to an element of $L_p(\mathcal N)$
as $N\to\infty$.
\end{theorem}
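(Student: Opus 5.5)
We follow Bourgain's strategy, transferred to the noncommutative setting. By the Calderón transference principle (available in the noncommutative setting because $\gamma$ is a trace-preserving automorphism), it suffices to prove the corresponding inequality on $\Z$ with values in $L_p(\N)$. We replace $A_N f$ by the convolution operators $K_N * f$ on $\ell_p(\Z; L_p(\N))$, where $K_N=\frac1N\sum_{n\le N}\delta_{P(n)}$. Since $K_N\ge0$, positivity lets us restrict to lacunary $N=\lfloor 2^{k\epsilon}\rfloor$, or simply $N=2^k$, up to a standard comparison for the intermediate $N$. The almost uniform convergence follows from \eqref{mthm1} by the usual Banach principle argument: on the dense subspace $L_2\cap L_p$, the splitting into the $\gamma$-invariant part, coboundaries, and the spectral decomposition gives convergence, and we then pass to oscillation/variational estimates only at the $L_2$ level. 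We therefore concentrate on the maximal inequality.

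\emph{Step 1 (major arc decomposition).} On the Fourier side we write $\widehat{K_{2^k}}=\sum_{s\le \kappa k} L_{k,s}+E_k$. Here
\[
L_{k,s}(\xi)=\sum_{a/q\in\mathcal R_s} G(a/q)\,\widehat{\Phi_{2^k}}(\xi-a/q)\,\eta_s(\xi-a/q),
\]
with Gauss sums $G(a/q)$, $q\sim 2^s$, and $\widehat{\Phi_{2^k}}$ the continuous average along $P$. By Weyl's inequality, the error $E_k$ satisfies $\|E_k\|_{\ell_2\to\ell_2}\lesssim 2^{-ck}$. Decay of the Gauss sums gives $|G(a/q)|\lesssim q^{-1/d}$, which yields the bound $\|\sup_k |L_{k,s}f|\|_{2}\lesssim 2^{-\delta s}\,\mathrm{polylog}$. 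The noncommutative difficulty is that Bourgain's bound on the $\ell_2$ maximal function uses a multi-frequency maximal inequality, which is not available for operators. Instead we split the $k$ into $k\le 2^{s}$ and $k>2^s$. For small $k$ we estimate the maximum crudely by the square function over $\lesssim 2^{s}$ indices, losing $s^{1/2}$; this loss is acceptable provided the smoothing is sharp. For large $k$, the arc $\eta_s$ is wider than the continuous kernel's localization, so $L_{k,s}=\widehat{\Phi_{2^k}}\cdot \Pi_s$ with a fixed multiplier $\Pi_s$ of norm $2^{-\delta s}$. The noncommutative Hardy–Littlewood maximal inequality of Junge–Xu/Mei then applies. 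Combining the two ranges gives the decaying $L_2$ bound $2^{-\delta' s}$ in $L_2(\N;\ell_\infty)$.

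\emph{Step 2 ($L_p$ bounds with at most polynomial growth in $s$) and interpolation.} For $p$ near $1$ we need $\|\sup_k|L_{k,s}f|\|_{L_p(\ell_\infty)}\lesssim s^{C}\|f\|_p$, or at worst $2^{\epsilon s}$. Interpolating against the $L_2$ decay would then give summability in $s$ for all $1<p<\infty$. This is the main obstacle, and it is where the scalar proof used the restricted range \eqref{restricted p}. We would attack it by localization. The multiplier $\Pi_s$ is periodic modulo $Q_s=\mathrm{lcm}\{q\le 2^s\}$, so we write $f$ as a sum over residue classes modulo $Q_s$, factor $L_{k,s}$ as a discrete periodic operator composed with a continuous-scale maximal operator, and control the latter via a noncommutative Stein extrapolation. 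That extrapolation is: a family analytic in a parameter, bounded on $L_2$ with decay and on $L_{1+}$ with exponential growth, is bounded with loss $s^{C}$ on $L_p$. The supremum over $k$ at scales $2^k<Q_s$ would be handled by viewing the averages as conditional expectations along a filtration adapted to the residues. The product structure arising from sums over several denominators requires a multilinear Doob inequality bounding $\|\sup_k \prod_j \E_k(f_j)\|$. We expect to prove this by induction on the number of factors using Junge's noncommutative Doob inequality and Hölder in $L_p(\ell_\infty)$. Once Step 2 is in place, summing over $s$ and adding the error terms $E_k$, which are controlled in $L_p(\ell_2)$ by interpolation with the trivial $\ell_1$ bound, completes the proof of \eqref{mthm1}.
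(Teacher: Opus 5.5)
\textbf{There is a genuine gap: Step~2 does not produce $L_p$ bounds for $p$ near $1$.}

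Interpolating an analytic family between decay $2^{-\delta s}$ on $L_2$ and growth $2^{Cs}$ near $L_1$ gives $2^{(C(1-\theta)-\delta\theta)s}$. That grows exponentially as $p\to1$; it does not give $s^C$. Your treatment of $E_k$ has the same defect. Its $L_2$ gain $2^{-ck}$ has $c\lesssim\kappa$, because $E_k$ contains the tail $\sum_{s>\kappa k}L_{k,s}$. Meanwhile its trivial $\ell_1$ bound grows like $2^{C\kappa k}$, coming from the kernels of the $L_{k,s}$, $s\le\kappa k$. So interpolation gives decay only for $p$ bounded away from $1$, whatever $\kappa$ is. The ``filtration adapted to the residues'' and the ``products over several denominators'' are not tied to any concrete estimate.

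What is missing is Bourgain's reorganization from \cite[Section~7]{Bo89}. For $k\in[4^s,4^{s+1})$ one writes $f*K_{2^k}=f*(K_{2^k}-M_{s,k})+\sum_{s'\le s}f*(M_{s',k}-M_{s'-1,k})$, using the common-denominator kernels \eqref{M_k}. These are uniformly bounded in $L_p(\ell_\infty)$ for $k\ge4^s$ (Lemma~\ref{lem:M-p-v2}). One then bounds the full averages $K_{2^k}$ on each block $4^s\le k<4^{s+1}$ in $L_{p_0}$ with loss only $s$, via the localized estimate of Proposition~\ref{Key1-es}.

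The tools you name do appear in the paper, but in different roles:
\begin{itemize}
\item Theorem~\ref{pro:stein extrapolation} is not complex interpolation. It is a Stein-type maximal principle that upgrades an $L_{p_0}\to L_1(\ell_\infty)$ bound to weak type $(p_0,p_0)$, via the noncommutative Borel--Cantelli Proposition~\ref{prop:borel-cantelli}.
\item Duality turns \eqref{Key1-es4} into the $q$th-moment bound \eqref{eq:q-moment-Key1} for $\sum_k g_k*K_{2^k}$, over $M\simeq\log k_0$ classes of $M$-separated scales.
\item Repeated indices are handled by M\"obius inversion (Proposition~\ref{prop:rep}).
\item The products to which the multilinear Doob inequality (Proposition~\ref{prop:Multi-Doob}) applies are the words $D_k$ of the cyclic expansion \eqref{eq:cyclic-minimum}. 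This is after the smallest-scale factor is replaced by $g_{k_1}*L_{k_0}$, at a cost controlled by Lemma~\ref{L_2:Lem}.
\end{itemize}

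\textbf{Step~1 also contains two errors.}

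First, for fixed $s$ the range $s/\kappa\le k\le2^s$ has about $2^s$ indices. The square function therefore costs $2^{s/2}$, not $s^{1/2}$, which swamps the gain $2^{-s/d}$ for every $d\ge2$. A dyadic Rademacher--Menshov chaining would reduce the loss to $O(s)$. It would use $\|(y_j)_j\|_{L_2(\Aalg;\ell_\infty)}\le(\sum_j\|y_j\|_2^2)^{1/2}$ and the uniformly bounded variation of $k\mapsto w_{2^k}(\beta)$ from \eqref{eq:wdec-v2}.

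Second, for large $k$ the identity $L_{k,s}=\widehat{\Phi_{2^k}}\cdot\Pi_s$ is false. The continuous multiplier is translated to each centre $a/q$, so no Hardy--Littlewood inequality applies directly. You must instead factor through a periodized multiplier $\sum_a w_N(\cdot-a/Q)\varphi(D(\cdot-a/Q))$, with a common denominator $Q$ and cutoffs supported in $(-\frac1{2Q},\frac1{2Q})$, and use the sampling principle (Lemmas~\ref{lem:CHW-v2}, \ref{lem:cyclic-v2}, \ref{lem:packet-v2}). The least common multiple of the $q\sim2^s$ has size $e^{c2^s}$. So this forces arcs far narrower than $\eta_s$ and is only available for $k\gtrsim2^s$.

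The paper gets its decaying $L_2$ bounds (Lemma~\ref{lem:L2decay-v2}) by another route. It uses Bourgain's approximant \eqref{eq:LN-v2} with error \eqref{eq:scalar-error-v2}, the layer estimate of Lemma~\ref{lem:layer-v2}, and the comparison Lemma~\ref{lem:comparison-v2}.

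\textbf{The convergence argument is also not routine.} Along $P(n)$, coboundaries do not telescope, and the spectral theorem only gives norm convergence. So there is no dense class on which bilateral almost uniform convergence is automatic. The paper instead proves the quantitative estimate \eqref{eq:quantitative-convergence}, using \eqref{eq:LN-v2} and \cite[Corollary~4.8 and Remark~4.9]{HongWang+arXiv2026}.
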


The main task of this paper is to prove the maximal inequality
\eqref{mthm1}. For bilateral almost uniform convergence, we adapt
the argument in \cite[Section~4 and Remark~4.9]{HongWang+arXiv2026}
from $P(n)=n^d$ to general polynomials with integer coefficients.
Together with \eqref{mthm1}, this gives convergence throughout
the range $1<p<\infty$. We outline this argument at the end of
Section~\ref{sec:proof of main result}.

As in \cite[Section~4.2]{CHW24}, a standard transference argument
reduces the proof of Theorem~\ref{thm1} to the corresponding
maximal inequality for the shift action on $\mathbb Z$. Write
\[
 \mathcal A_{\mathbb Z}(\mathcal N)
 :=\ell_\infty(\mathbb Z)\overline\otimes\mathcal N,
\]
equipped with the canonical tensor-product trace. For every
finitely supported $\mathcal S(\mathcal N)$-valued function $f$ on $\mathbb Z$ where $\mathcal S(\mathcal N)$ is the set of bounded operators with finite support trace,
define
\begin{equation*}
 \mathcal A_Nf:=f*K_N,
 \qquad
 K_N:=\frac1N\sum_{n=1}^N\delta_{P(n)}.
\end{equation*}
Here convolution is taken in the $\mathbb Z$-variable.

By a standard doubling argument, it suffices to prove the
following dyadic maximal inequality:
\begin{equation}\label{thm11}
 \big\|(\mathcal A_{2^k}f)_{k\in\mathbb N}\big\|_
 {L_p(\mathcal A_{\mathbb Z}(\mathcal N);\ell_\infty)}
 \lesssim_{p,P}
 \|f\|_{L_p(\mathcal A_{\mathbb Z}(\mathcal N))}
\end{equation}
for every finitely supported $\mathcal S(\mathcal N)$-valued
function $f$ on $\mathbb Z$.

A key ingredient in the proof of Theorem~\ref{thm1}, beyond
the results of \cite{CHW24}, is the following noncommutative
version of Bourgain's estimate \cite[Lemma~7.32]{Bo89}.

\begin{proposition}\label{Key1-es}
Let $1<p<\infty$ and $N_0\geq3$. Then
\begin{equation*}
 \big\|(f*K_N)_{N_0<N<N_0^2}\big\|_
 {L_p(\mathcal A_{\mathbb Z}(\mathcal N);\ell_\infty)}
 \lesssim_{p,P} \log\log N_0\,
 \|f\|_{L_p(\mathcal A_{\mathbb Z}(\mathcal N))}
\end{equation*}
for every finitely supported $\mathcal S(\mathcal N)$-valued
function $f$ on $\mathbb Z$.
\end{proposition}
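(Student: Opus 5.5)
Following Bourgain's proof of \cite[Lemma~7.32]{Bo89}, I would decompose the kernel $K_N$ by the circle method at the scale of $N_0$, treating each $N\in(N_0,N_0^2)$ at once. Write $\widehat{K_N}(\theta)=\frac1N\sum_{n\le N}e(P(n)\theta)$. On a major arc around $a/q$ with $q\le Q:=(\log N_0)^{C}$ and $|\theta-a/q|\lesssim N^{-d}Q$, Weyl/Hua estimates give the approximation
\[
\widehat{K_N}(\theta)\approx\sum_{q\le Q}\sum_{(a,q)=1}S(a,q)\,\widehat{\Phi_N}(\theta-a/q)\,\eta_Q(\theta-a/q),
\]
with $S(a,q)=q^{-1}\sum_{r\le q}e(aP(r)/q)$ the Gauss sum and $\Phi_N$ the continuous average of $\delta_{P(t)}$, $t\in[0,N]$. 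On minor arcs Weyl's inequality gives $|\widehat{K_N}|\lesssim (\log N_0)^{-A}$. Correspondingly $f*K_N=f*M_N+f*E_N$, with $M_N$ the major-arc multiplier.

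\emph{Error term.} For the error I would use the noncommutative square-function domination $\|(g_N)\|_{L_p(\ell_\infty)}\le\|(\sum_N|g_N|^2)^{1/2}\|_p$ (valid for $p\ge2$, and handled through an $\ell_1$-duality/column-row version for $p<2$). This gives at most $N_0^2$ terms, each with $L_2$ norm $\lesssim(\log N_0)^{-A}\|f\|_2$. That is too crude. Instead I would control the error at $L_2$ via $\sum_{N}\|f*E_N\|_2^2$ restricted to a sparse set ($N$ ranging over $(1+(\log N_0)^{-1})$-lacunary points, about $(\log N_0)^{2}$ of them), with intermediate $N$ handled by a variational/difference estimate. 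Since $E_N$ is uniformly bounded on every $L_r$ by $O(\log N_0)^{O(1)}$ trivially, interpolating this $L_2$ decay against trivial $L_r$ bounds at $r$ close to $1$ or $\infty$ makes the polylog factors harmless. This is where I would invoke the noncommutative Stein extrapolation the abstract mentions, so that the estimate is obtained for all $1<p<\infty$ and not just near $2$.

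\emph{Main term.} For $M_N$, I would group denominators dyadically, $q\sim 2^s$ with $2^s\le Q$, giving about $\log Q\sim\log\log N_0$ groups; this is the source of the $\log\log N_0$ loss. Within a group the key task is to bound
\[
\Big\|\Big(f*\sum_{q\sim2^s}\sum_a S(a,q)\,\Phi_N^{a/q}\Big)_N\Big\|_{L_p(\ell_\infty)}
\]
by a constant independent of $s$, or with decay $2^{-\epsilon s}$ at $p=2$. The approach is to factor the operator. First apply the periodic multiplier $\sum_{a,q}S(a,q)\eta_Q(\cdot-a/q)$, which is $L_p$-bounded with polylog loss via Ionescu–Wainger-type or more simply via $|\{a/q\}|\le 2^{2s}$ and a fixed-$N$ bound. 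Then apply the continuous maximal operator $\sup_N|g*\Phi_N|$. The frequency separation $|\theta-a/q|\lesssim N^{-d}Q\ll Q^{-2}$, which holds because $N>N_0\gg Q$, lets one write $\Phi_N^{a/q}$ modulated as $\Phi_N$ acting on functions living on residue classes mod $q$. The maximal bound then follows by transference from the noncommutative maximal inequality for $\Phi_N$ (a Junge–Xu type result from \cite{CHW24}) applied fibrewise on $\Z/q\Z\times\Z$.

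\emph{Main obstacle.} The hard step is the main term at $p$ far from $2$. In the scalar proof the multi-frequency maximal inequality is used there, and that inequality is unavailable noncommutatively. I would first try to avoid it as follows. Because $N\in(N_0,N_0^2)$ and $Q$ is only polylogarithmic, the smoothing $\eta_Q$ can be chosen at scale $N_0^{-d}$, not $Q^{-2}$. With that choice the whole common-denominator block $\sum_{q\le Q}$ with $\mathrm{lcm}$ $\le e^{Q}\ll N_0$ becomes a single ``periodic multiplier times one-frequency maximal operator''. The operator thus splits into a fixed bounded multiplier and a maximal function along a single frequency, needing only a single Doob/Junge–Xu type inequality and not a multi-frequency one. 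If that common-denominator factorization loses too much at $p\ne2$, I would combine the $L_2$ bound, which has only $\log\log N_0$ loss, with trivial $L_{1+}$ and $L_\infty$ bounds of size $\log N_0$ through the extrapolation step, keeping the loss at $\log\log N_0$.
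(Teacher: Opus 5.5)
\textbf{There is a genuine gap.} For $p$ near $1$, nothing in your proposal gives a bound better than a power of $\log N_0$, and the steps meant to do so are incorrect.

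\emph{The common-denominator step fails as stated.} Since $\operatorname{lcm}(1,\dots,Q)=e^{Q(1+o(1))}$, it is not $\ll N_0$ for $Q=(\log N_0)^C$ with $C\ge1$; for $C>1$ it is vastly larger. The sampling principle needs a cutoff supported in $|\beta|<1/(2\operatorname{lcm})$ that is also equal to one well beyond the scale $N_0^{-d}$. Together these force $Q\lesssim\log N_0$.

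\emph{The main term is not the obstacle.} In the admissible regime $Q\lesssim\log N_0$, the major-arc part is uniformly bounded on every $L_p(\ell_\infty)$, exactly as in Lemma~\ref{lem:M-p-v2}. The real difficulty is the remainder $K_N-M_N$. Its $L_2(\ell_\infty)$ norm over $N_0<N<N_0^2$ gains only a small fixed power of $\log N_0$ (the Weyl/Gauss-sum exponent). Without the proposition itself, the available $L_r(\ell_\infty)$ bound for $r$ near $1$ is the trivial $O(\log N_0)$, obtained by summing over the $\sim\log_2N_0$ dyadic scales of $K_N$. Interpolating these two bounds works only for $p$ in a neighbourhood of $2$ fixed by those exponents.

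\emph{Enlarging $Q$ does not help.} Taking $Q$ larger to gain more $L_2$ decay destroys the common-denominator structure. The major-arc $L_r$ bounds then grow like $Q^{O(1)}$. Suppressing that growth is what Ionescu--Wainger does, and no $L_p(\mathcal N;\ell_\infty)$ version of it is available.

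\emph{Your fallback does not repair this.} Interpolating an $L_2$ bound $\log\log N_0$ with $L_{1+}$ bounds $\log N_0$ gives $(\log N_0)^{1-\theta}(\log\log N_0)^{\theta}$. Theorem~\ref{pro:stein extrapolation} only converts an $L_p\to L_1(\ell_\infty)$ estimate into a weak type $(p,p)$ one with the same constant; it does not lower constants. In fact, a $\log\log N_0$ bound at all $p$ is precisely what is needed to interpolate the remainder against its $L_2$ decay; this is how the proposition is used in Section~\ref{sec:proof of main result}. So the scheme is circular.

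\textbf{The missing idea} is Bourgain's positivity--duality--moment argument, which the paper carries out.
\begin{enumerate}
\item Write $N_0=2^{k_0}$ and fix an integer $q>2$ with $q/(q-1)<p$. Theorem~\ref{pro:stein extrapolation} and interpolation reduce the proposition to an $L_{q/(q-1)}\to L_1(\ell_\infty)$ bound.
\item By duality this becomes $\|\sum_k g_k*K_{2^k}\|_q\lesssim\log k_0$ for $g_k\ge0$ with $\|\sum_k g_k\|_\infty\le1$.
\item Split $k_0<k<2k_0$ into $M\simeq\log k_0$ residue classes mod $M$. This splitting, not the number of denominator layers, is the source of the $\log\log N_0$ loss.
\item Within one class the scales are $2^M$-separated. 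Expand $\|\sum_k a_k\|_q^q$ with $a_k=g_k*K_{2^k}$ and $A=\sum_k a_k$. Repeated-index terms are bounded by $\|A\|_{q-1}^{q-1}$ (Proposition~\ref{prop:rep}).
\item In each distinct-index term, rotate the smallest scale to the front and replace $a_{k_1}$ by $g_{k_1}*K_{N_0}$ at cost $k_0^{-q}$ (Lemma~\ref{L_2:Lem}). The circle method enters only here, at $L_2$, through the separation $N_1/N_2\le2^{-M}$.
\item The remaining sum is controlled by $L_q(\ell_1)$--$L_{q/(q-1)}(\ell_\infty)$ duality and the multilinear Doob inequality (Proposition~\ref{prop:inj term}). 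This gives $\|A\|_q^q\lesssim\|A\|_q^{q-1}+1$.
\end{enumerate}
This argument uses only $0\le a_k\le1$ and an $L_2$ input, and it is what reaches $p$ close to $1$; nothing in your proposal plays its role.
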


Noncommutativity creates lots of difficulties in adapting scalar
pointwise arguments, since many fundamental classical notions and techniques, such as maximal functions, stopping times, the Borel--Cantelli/covering lemma, and monotonicity or
convexity arguments, either fail to exist or cease to hold in the noncommutative framework. This makes
research in this area highly challenging, having led researchers to introduce new
concepts, tools, and techniques, including vector-valued noncommutative $L_p$-spaces and probabilistic methods as in the previously-mentioned papers \cite{La,Ye,Pisier+Asterique98,Junge+Crelle2002,JX07}

The proof of Proposition \ref{Key1-es}  requires entirely new ideas and tools to overcome certain difficulties that are specific to the noncommutative case but, in the commutative case, appear as utterly trivial facts.

i) First, we develop a noncommutative version of Stein's
extrapolation argument in Section~\ref{sec:ste}. Building on
earlier ideas of Kolmogorov and Calder\'on, Stein
\cite{Stein+AM1961} showed, under suitable hypotheses, that
almost everywhere convergence of a sequence of convolution
operators on $L_p$ implies a weak type $(p,p)$ estimate for the
associated maximal operator when $1\leq p\leq2$. In this sense,
his result is an analogue of the Uniform Boundedness Principle
for almost everywhere convergence rather than norm convergence.
Similar results were obtained by Burkholder \cite{Bur64} in
probability and by Sawyer \cite{Saw} in ergodic theory.

Noncommutative individual ergodic theorems, including
\cite{La,Ye,JX07,HLW}, have generally been proved with the help
of maximal inequalities. This makes it natural to ask whether
Stein's implication has a noncommutative counterpart. A general
analogue is an extremely difficult problem: Stein's proof relies
on a pointwise maximal function and many point-dependent techniques including a Borel--Cantelli argument,
neither of which can be used directly in the noncommutative
setting. In the Appendix, we obtain an $L_1$ result in the
spirit of Burkholder's theorem. For $1<p\leq2$,
Theorem~\ref{pro:stein extrapolation} gives a weaker form of
Stein's result that is sufficient for the present paper. A key
step is to find a suitable replacement for the
Borel--Cantelli argument.
In the commutative proof, one constructs sets $E_k$ on
which the corresponding maximal functions exceed
$\lambda_k$, where $\lambda_k\to\infty$.
The Borel--Cantelli argument ensures that almost every
point belongs to infinitely many $E_k$, which is the
key to the contradiction.

In our proof, these sets are replaced by projections, and the Borel--Cantelli type result in Proposition~\ref{prop:borel-cantelli} plays the corresponding
role. We first use a tensor-product construction to make the
relevant projections commute. Applying the conditional expectation onto the
commutative algebra generated by these projections then yields the scalar inequalities corresponding to
the operator inequalities in \eqref{est7}. Consequently, the commutative  contradiction
argument completes the proof of
Theorem~\ref{pro:stein extrapolation}.  

ii) Second, the moment expansion in
Section~\ref{combinatorial method} presents the second substantial
obstacle in the noncommutative setting. After applying
extrapolation, Bourgain uses duality to reduce part of his
argument to an estimate of
\[
 \left\|\sum_{i\in I}a_i\right\|_q^q
\]
for positive scalars $a_i$ and a large integer $q$. He
expands the $q$th power and uses the facts that the factors
commute and that their products remain nonnegative; see
\cite[(7.40)--(7.41)]{Bo89}. Neither fact holds for positive
operators, so this part of the scalar proof cannot be carried
over directly. To overcome this difficulty, we combine
M\"obius inversion with probabilistic and interpolation
arguments to control the terms with repeated indices. The
result is Proposition~\ref{prop:rep}, which plays the role of
the scalar estimate in \cite[(7.41)]{Bo89}. 

iii) Third, the terms with distinct indices present the third
substantial obstacle. In the scalar proof, Bourgain uses
\cite[(7.42)]{Bo89} and the positivity of products to obtain
\cite[(7.43)]{Bo89}. In the noncommutative setting, a product
of positive operators need not be positive, and we no longer
have an exact analogue of \cite[(7.40)]{Bo89}.

Instead, cyclicity of the trace gives the identity
\eqref{eq:cyclic-minimum}. Duality between
$L_q(\N;\ell_1)$ and
$L_{\frac{q}{q-1}}(\N;\ell_\infty)$ then reduces the problem
to estimating $ \big\|(D_i)_i\big\|_
 {L_{\frac{q}{q-1}}(\N;\ell_\infty)}
$.
A direct product estimate can fail in the noncommutative
setting; see Remark~\ref{extreme case}. Using the combinatorial
ideas developed above, we express the relevant terms in $D_i$
through linear combinations of products of martingale averages and establish the
multilinear noncommutative Doob's maximal inequality
\eqref{prop:Multi-Doob-mart} to control them. Such a maximal
inequality for products of martingales is rather unexpected in
the noncommutative setting and may be of independent interest.
The details are given in
Sections~\ref{subsec:maximal} and~\ref{subsec:multilinear}.

\bigskip

We next explain how Proposition~\ref{Key1-es} enters the proof
of Theorem~\ref{thm1}. The proof follows the overall strategy
of Bourgain \cite{Bo89}; see
Section~\ref{sec:proof of main result}. It relies on maximal
and approximation estimates for the major-arc multipliers.
Combining Proposition~\ref{Key1-es} with the results of
\cite{CHW24} gives the $L_p$ estimates \eqref{Key-es5} and
\eqref{Key-es6}, whose bounds have at most polynomial growth. To make
the resulting estimates summable, via interpolation we also need the
exponentially decaying $L_2$ estimates in
Lemma~\ref{lem:L2decay-v2}.
The main additional difficulty is to prove these $L_2$
estimates in the noncommutative setting. In the scalar case,
Bourgain obtains the corresponding estimates using the
multi-frequency maximal inequality \cite[Lemma~4.1]{Bo89}.
Its pointwise metric-entropy argument does not directly extend
to $L_2(\Aalg;\ell_\infty)$; see
Remark~\ref{rem:Bourgain-Lemma41}. We use a different argument
based on the decomposition in \cite[Sections~3 and~6]{Bo881}.
The required estimates are established in
Lemmas~\ref{lem:layer-v2} and~\ref{lem:comparison-v2}.

More precisely, in \cite[Section~6]{Bo89}, Bourgain divides
the set of rational numbers $a/q$, with $(a,q)=1$, into subsets according to the size of $q$, placing those with $2^s\leq q<2^{s+1}$ in the same set. A central advance of that paper is the multi-frequency maximal inequality \cite[Lemma~4.1]{Bo89}, which controls the corresponding
maximal operators without using a common denominator for
the rational points. Its bound depends only logarithmically
on the number of points, see
Remark~\ref{rem:Bourgain-Lemma41}.
This approach makes the proof less dependent on the
arithmetic properties of the exponential sums, unlike
the earlier argument in \cite[Sections~3 and~6]{Bo881}.

Establishing a noncommutative analogue of this
multi-frequency maximal inequality appears to be a difficult
problem. We instead combine Bourgain's earlier decomposition
\cite[Sections~3 and~6]{Bo881}, presented here in
\eqref{eq:Qu-v2}--\eqref{eq:LN-v2}, with the noncommutative
sampling principle in Lemma~\ref{lem:CHW-v2}.
This decomposition divides the rational numbers into sets
according to common denominators, rather than only the
size of their denominators. We apply the sampling principle
to estimate the contribution of each such set and then sum
the estimates using the decay of the exponential sums
$S(a/q)$, defined in \eqref{eq:S-v2}.
This gives the operator-valued estimate in
Lemma~\ref{lem:layer-v2}.
The argument retains the stronger arithmetic information
used in Bourgain's earlier proof, including Weil's estimate
and the multiplicative properties of the exponential sums.
These enter through the scalar approximation
\eqref{eq:scalar-error-v2}, whose error is
$O_P((\log N)^{-\sigma})$ for some $\sigma>1/2$.

A key new step is Lemma~\ref{lem:comparison-v2}, which
compares the truncated multiplier $L_{2^k}^{(\leq2^s)}$
with $\widehat M_{s,k}$ in the noncommutative maximal
$L_2$-norm. The latter multiplier is defined in
\eqref{M_k}, following Bourgain's construction in
\cite[Section~7]{Bo89}.
This comparison allows us to obtain the decaying
$L_2$-estimates in Lemma~\ref{lem:L2decay-v2} from
Lemma~\ref{lem:layer-v2} and the scalar approximation
\eqref{eq:scalar-error-v2}, without requiring a
noncommutative analogue of \cite[Lemma~4.1]{Bo89}.

\noindent\textbf{Notation.}
Throughout the paper, $C$ denotes a positive constant whose value
may change from line to line. Constants may depend on the fixed
polynomial $P$ and the fixed cutoff functions, but are independent
of the averaging and scale parameters unless otherwise stated.
For nonnegative quantities $X$ and $Y$, we write $X\lesssim Y$
if $X\leq CY$. Additional dependence is indicated by subscripts,
as in $X\lesssim_{\varepsilon}Y$. Finally, $X\simeq Y$ means
that both $X\lesssim Y$ and $Y\lesssim X$ hold.

\section{Preliminaries}\label{pre}

\subsection{Noncommutative \texorpdfstring{$L_p$}{Lp}-spaces}

We refer to \cite{DDPS}. Let $(\mathcal M,\tau)$ be a semifinite von Neumann algebra equipped
with a normal faithful semifinite trace $\tau$. Denote by
$\mathcal S(\mathcal M)_+$ the set of all $x\in\mathcal M_+$ such
that
\[
 \tau(\supp x)<\infty,
\]
where $\supp x$ denotes the support projection of $x$. The linear
span of $\mathcal S(\mathcal M)_+$, denoted by
$\mathcal S(\mathcal M)$, is a weak$^*$-dense $*$-subalgebra of
$\mathcal M$.
For $1\leq p<\infty$, define
\[
 \|x\|_p:=\bigl(\tau(|x|^p)\bigr)^{1/p},
 \qquad x\in\mathcal S(\mathcal M),
\]
where $|x|=(x^*x)^{1/2}$. The completion of
$\mathcal S(\mathcal M)$ with respect to this norm is the
noncommutative $L_p$-space associated with $(\mathcal M,\tau)$,
denoted by $L_p(\mathcal M)$. As usual, we set
\[
 L_\infty(\mathcal M):=\mathcal M,
 \qquad
 \|x\|_\infty:=\|x\|_{\mathcal M}.
\]

Suppose that $\mathcal M$ acts on a separable Hilbert space $H$.
A closed densely defined operator $x$ on $H$ is said to be affiliated with
$\M$ if $u^* xu = x$ for all unitary operators $u$ in the commutant $\M'$	 of $\M$. If $x$ is a densely defined self-adjoint operator on $H$ with spectral decomposition $x=\int_{-\infty}^\infty sde_s^x$, then,
for any Borel subset $B\subset \R$, we denote by $\chi_B(x)$ the corresponding spectral projection $\int_{-\infty}^\infty \chi_B sde_s^x$.
A closed and densely defined operator $x$ affiliated with
$\M$ is called $\tau$-measurable if there exists $\lambda>0$ such that
$$\tau(\chi_{(\lambda,\infty)}(|x|))<\infty.$$
We denote the set of the $*$-algebra of $\tau$-measurable operators by $L_0(\M)$. 

Let $(X,\mu)$ be a measure space and let $\mathcal B$ be a
semifinite von Neumann algebra equipped with a normal faithful
semifinite trace $\tau_{\mathcal B}$. We use the notation
\[
 \Big(\mathcal A_X(\mathcal B), \tau\Big)
 :=\Big(L_\infty(X)\overline\otimes\mathcal B,\int_X\otimes\tau_{\mathcal B}\Big)
\]
for the associated semicommutative von Neumann algebra equipped with the tensor-product trace.

\subsection{Noncommutative
\texorpdfstring{$\ell_\infty$-valued $L_p$}{ell-infinity-valued Lp}-spaces}

Let $1\leq p\leq\infty$ and let $\Lambda$ be an index set. The
space $L_p(\mathcal M;\ell_\infty(\Lambda))$ was fristly introduced  by Pisier \cite{Pisier+Asterique98} and then generalized by Junge \cite{Junge+Crelle2002} as a right functional setting to avoid talking about maximal funcions. It consists of all
families
\[
 x=(x_\lambda)_{\lambda\in\Lambda},
 \qquad x_\lambda\in L_p(\mathcal M),
\]
for which there exist $a,b\in L_{2p}(\mathcal M)$ and a bounded
family $(y_\lambda)_{\lambda\in\Lambda}\subset\mathcal M$ such that
\[
 x_\lambda=ay_\lambda b,
 \qquad \lambda\in\Lambda.
\]
Here, when $p=\infty$, we use the convention
$L_{2p}(\mathcal M)=L_\infty(\mathcal M)=\mathcal M$.

The norm on $L_p(\mathcal M;\ell_\infty(\Lambda))$ is defined by
\begin{equation*}
 \big\|(x_\lambda)_{\lambda\in\Lambda}\big\|_
 {L_p(\mathcal M;\ell_\infty(\Lambda))}
 :=
 \inf_{x_\lambda=ay_\lambda b}
 \left\{
 \|a\|_{2p}
 \sup_{\lambda\in\Lambda}\|y_\lambda\|_\infty
 \|b\|_{2p}
 \right\},
\end{equation*}
where the infimum is taken over all such factorizations.

It was proved in \cite{JX07} that a family
$(x_\lambda)_{\lambda\in\Lambda}$ belongs to
$L_p(\mathcal M;\ell_\infty(\Lambda))$ if and only if
\[
 \sup_{\substack{I\subset\Lambda\\ I\text{ finite}}}
 \big\|(x_\lambda)_{\lambda\in I}\big\|_
 {L_p(\mathcal M;\ell_\infty(I))}
 <\infty.
\]
Moreover,
\begin{equation*}
 \big\|(x_\lambda)_{\lambda\in\Lambda}\big\|_
 {L_p(\mathcal M;\ell_\infty(\Lambda))}
 =
 \sup_{\substack{I\subset\Lambda\\ I\text{ finite}}}
 \big\|(x_\lambda)_{\lambda\in I}\big\|_
 {L_p(\mathcal M;\ell_\infty(I))}.
\end{equation*}

If every $x_\lambda$ is self-adjoint, then
$(x_\lambda)_{\lambda\in\Lambda}$ belongs to
$L_p(\mathcal M;\ell_\infty(\Lambda))$ if and only if there exists
$a\in L_p(\mathcal M)_+$ such that
\[
 -a\leq x_\lambda\leq a,
 \qquad \lambda\in\Lambda.
\]
In this case,
\begin{equation*}
 \big\|(x_\lambda)_{\lambda\in\Lambda}\big\|_
 {L_p(\mathcal M;\ell_\infty(\Lambda))}
 =
 \inf\left\{
 \|a\|_p:
 a\in L_p(\mathcal M)_+,\
 -a\leq x_\lambda\leq a
 \text{ for every }\lambda\in\Lambda
 \right\}.
\end{equation*}

In particular, if $x_\lambda\geq0$ for every $\lambda\in\Lambda$,
then
\begin{equation*}
 \big\|(x_\lambda)_{\lambda\in\Lambda}\big\|_
 {L_p(\mathcal M;\ell_\infty(\Lambda))}
 =
 \inf\left\{
 \|a\|_p:
 a\in L_p(\mathcal M)_+,\
 x_\lambda\leq a
 \text{ for every }\lambda\in\Lambda
 \right\}.
\end{equation*}
This justifies the commonly used notation
$$\big\|(x_\lambda)_{\lambda\in\Lambda}\big\|_
 {L_p(\mathcal M;\ell_\infty(\Lambda))}
 =\big\| \mathop{\rm sup^+}\limits_{\lambda\in \Lambda} x_\lambda\big\|_p.$$

When the index set is clear from the context, we simply write
$L_p(\mathcal M;\ell_\infty)$ instead of
$L_p(\mathcal M;\ell_\infty(\Lambda))$.

We shall also use freely  the following interpolation result from
\cite{JX07}.

\begin{lemma}
Let $1\leq p_0,p_1\leq\infty$ and $0<\theta<1$. Define $p$ by
\[
 \frac{1}{p}
 =
 \frac{1-\theta}{p_0}
 +
 \frac{\theta}{p_1}.
\]
Then
\begin{equation*}
 \big(
 L_{p_0}(\mathcal M;\ell_\infty),
 L_{p_1}(\mathcal M;\ell_\infty)
 \big)_\theta
 =
 L_p(\mathcal M;\ell_\infty)
\end{equation*}
isometrically.
\end{lemma}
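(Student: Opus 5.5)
The plan is to reduce the statement to complex interpolation of compatible couples of Banach spaces, using the factorization description of $L_p(\M;\ell_\infty)$ recalled above.

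\textbf{Contractive inclusion into the interpolation space.} Take $x=(x_\lambda)$ with $\|x\|_{L_p(\M;\ell_\infty)}<1$. Choose a factorization $x_\lambda=ay_\lambda b$ with $\|a\|_{2p}=\|b\|_{2p}<1$ and $\sup_\lambda\|y_\lambda\|_\infty\le1$. By polar decomposition we may take $a,b\ge0$, absorbing the unitary parts into $y_\lambda$. For $z$ in the strip $0\le\operatorname{Re}z\le1$ put
\[
 a(z)=a^{\,p\left(\frac{1-z}{p_0}+\frac{z}{p_1}\right)},\qquad b(z)=b^{\,p\left(\frac{1-z}{p_0}+\frac{z}{p_1}\right)},
\]
and $F(z)=(a(z)y_\lambda b(z))_\lambda$. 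On $\operatorname{Re}z=j$ the operators $a(z),b(z)$ are a unitary times $a^{p/p_j}$, $b^{p/p_j}$, whose $L_{2p_j}$-norms are at most $\|a\|_{2p}^{p/p_j}<1$. Hence $F(j+it)$ has $L_{p_j}(\M;\ell_\infty)$-norm at most $1$. At $z=\theta$ we recover $F(\theta)=x$, which gives $\|x\|_\theta\le1$.

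Two routine points remain for this direction. First, to get analyticity and boundedness into the sum space, reduce to finite index sets $I$, which is legitimate by the finite-subset characterization from \cite{JX07}. Second, truncate $a,b$ spectrally away from $0$ and $\infty$, so that $a,b\in\mathcal S(\M)$ with bounded inverse on their support. The truncation is then removed by approximation.

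\textbf{Reverse inequality, via duality.} This is the harder direction. The plan is to avoid the three-lines lemma for factorizations and instead identify $L_p(\M;\ell_\infty(I))$, for finite $I$, isometrically as the dual of $L_{p'}(\M;\ell_1(I))$. Here $\ell_1$-valued spaces are defined as elements $\sum_\lambda u_{\lambda}^*v_{\lambda}$ with the column/row norm, and this duality is known from \cite{Junge+Crelle2002,JX07}. One then checks that the $\ell_1$ scale interpolates, $(L_{p_0'}(\ell_1),L_{p_1'}(\ell_1))_\theta=L_{p'}(\ell_1)$, at least as a contractive inclusion. This works by the same explicit analytic-family argument, since $\ell_1$-valued elements factor as products of a column in $L_{2p'}(\M;\ell_2^c)$ and a row in $L_{2p'}(\M;\ell_2^r)$, and those row and column spaces interpolate as complemented subspaces of $L_{2p'}(M_\infty(\M))$.

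The duality theorem for complex interpolation, $(X_0,X_1)_\theta^*=(X_0^*,X_1^*)^\theta$, then gives the reverse inequality. For finite $I$ and $p_j>1$ these spaces are reflexive, so the upper method coincides with the lower one and the norms match. The endpoint cases $p_j\in\{1,\infty\}$ have to be handled separately. For $p_1=\infty$ one uses that $L_\infty(\M;\ell_\infty)$ is a dual space, and the Calderón duality for one reflexive endpoint still applies after restricting to the closure of the intersection. Finally, the finite-$I$ equality passes to arbitrary $\Lambda$ by the supremum formula over finite subsets.

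\textbf{Main obstacle.} I expect the reverse inequality to be the main difficulty, namely producing a global factorization of an element of the interpolation space. The duality route handles this, but it needs the $\ell_1$/$\ell_\infty$ duality and some care at the endpoints $p_0=1$ or $p_1=\infty$.
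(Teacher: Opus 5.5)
The paper gives no proof of this lemma. It quotes it from \cite{JX07} and uses it as a black box, so your argument has to stand on its own. Its structure is the standard one and is sound: an explicit analytic factorization family gives the contractive inclusion $L_p(\mathcal M;\ell_\infty)\subset(L_{p_0}(\mathcal M;\ell_\infty),L_{p_1}(\mathcal M;\ell_\infty))_\theta$, and duality with the $\ell_1$-scale gives the converse. Two steps need repair.

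\emph{Easy direction.} Here the reduction to finite $I$ is not ``legitimate by the finite-subset characterization''. That formula is about the $L_p(\mathcal M;\ell_\infty)$-norm; it gives no a priori bound of the interpolation norm over $\Lambda$ by interpolation norms over finite $I\subset\Lambda$. The reduction is also unnecessary. After truncation, $z\mapsto a(z),b(z)$ are bounded analytic with values in $e\mathcal Me$, where $\tau(e)<\infty$. Moreover $F(z)=\Phi(a(z),b(z))$, where $\Phi(c,d)=(cy_\lambda d)_{\lambda\in\Lambda}$ is bounded bilinear $L_{2p_j}(\mathcal M)\times L_{2p_j}(\mathcal M)\to L_{p_j}(\mathcal M;\ell_\infty(\Lambda))$ for $j=0,1$. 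So $F$ is admissible for any index set. In the converse direction, passing from finite $I$ to $\Lambda$ is fine, because restriction to $I$ is contractive at both endpoints.

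\emph{Endpoints in the converse.} Your endpoint discussion is misdirected. Put $Y_j=L_{p_j'}(\mathcal M;\ell_1(I))$.
The case $p_1=\infty$ needs nothing special. If $1<p_0<\infty$, then $Y_0$ is reflexive, $Y_0\cap Y_1$ is dense in both, and $Y_1^*=L_\infty(\mathcal M;\ell_\infty(I))$. So the reflexive form of Calder\'on's duality theorem applies directly.
The case you do not treat, $p_0=1$, is where the route breaks. There $Y_0=L_\infty(\mathcal M;\ell_1(I))$ is not reflexive, and $Y_0\cap Y_1$ need not be dense in it when $\tau(1)=\infty$. Also $L_1(\mathcal M;\ell_\infty(I))$ is in general strictly smaller than $Y_0^*$. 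For $\{p_0,p_1\}=\{1,\infty\}$ neither endpoint is reflexive.

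\emph{Remedy.} Use only the elementary half of duality.
\begin{itemize}
\item For $x$ in the interpolation space, pick an admissible $f$ with $f(\theta)=x$.
\item For $y$ in the dense class of finitely supported elements of $L_{p'}(\mathcal M;\ell_1)$ with truncated factors and $\|y\|_{L_{p'}(\ell_1)}<1$, take the explicit family $g$ with $g(\theta)=y$ and boundary norms at most $1$.
\item The pairing bound $|\sum_\lambda\tau(x_\lambda y_\lambda)|\le\|x\|_{L_r(\ell_\infty)}\|y\|_{L_{r'}(\ell_1)}$ holds for every $1\le r\le\infty$, by H\"older and Cauchy--Schwarz on the factorizations.
\item The three lines lemma applied to $\sum_\lambda\tau(f(z)_\lambda g(z)_\lambda)$ then gives $|\langle x,y\rangle|\le\|x\|_\theta\|y\|_{L_{p'}(\ell_1)}$.
\end{itemize}
Since $p_0\neq p_1$ forces $1<p<\infty$, the isometric duality $L_{p'}(\mathcal M;\ell_1)^*=L_p(\mathcal M;\ell_\infty)$ applies. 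It yields $x\in L_p(\mathcal M;\ell_\infty)$ with $\|x\|_{L_p(\ell_\infty)}\le\|x\|_\theta$. To identify the representing element with $x$, test against $y$ supported at a single index. This covers all endpoints and arbitrary index sets at once, with no reflexivity needed.
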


It is well-known that $L_{p}(\mathcal M;\ell_\infty)$ is a dual space when $1<p\leq\infty$, and its predual space is denoted by $L_{p'}(\mathcal M;\ell_1)$ with $p'$ being the conjugate exponent of $p$. In the present paper, we do not need all the properties of $L_{p'}(\mathcal M;\ell_1)$, and only mention the only one that we will use: given a sequence of positive operators $(x_\lambda)_{\lambda\in \Lambda}$, one has
$$\|(x_\lambda)_{\lambda\in \Lambda}\|_{L_{p'}(\mathcal M;\ell_1)}=\Big\|\sum_{\lambda\in\Lambda}x_{\lambda}\Big\|_{p'}.$$

Besides the abvove noncommutative strong maximal norms, there also exist noncommutative weak maximal quasi-norms; this will be recalled in \eqref{phi_inf}.

\section{A noncommutative version of Stein's extrapolation}
\label{sec:ste}

Let $G$ be a compact abelian group equipped with the normalized Haar measure $\mu$. Let $(\mathcal M,\tau)$ be
a finite von Neumann algebra equipped with a normal faithful tracial
state. We write
\[
 \mathcal A=L_\infty(G)\overline\otimes\mathcal M,
 \qquad
 \varphi=\int_G\otimes\tau.
\]
For $g\in G$, let $\alpha_g$ be the trace-preserving automorphism of
$\mathcal A$ induced by translation:
\[
 (\alpha_gf)(h)=f(g^{-1}h),
 \qquad h\in G.
\]

Let $\mathcal I$ be an index set. For $r\in\mathcal I$, we consider the convolution operator on the group variable with a Borel probability measure $\nu_r$
\[
 A_r(f)(g)
 = \int_{G}f(gh^{-1})\,d\nu_r(h),
 \qquad g\in G.
\]
Each $A_r$ is a positive trace-preserving contraction on
$L_p(\mathcal A)$ for every $1\leq p\leq\infty$, and
\[
 A_r\alpha_g=\alpha_gA_r.
\]

We shall also use countable tensor products of finite von Neumann
algebras. If $(\mathcal M^\infty,\tau^\infty)$ is such a tensor
product, we write
\[
 \mathcal A^\infty
 =
 L_\infty(G)\overline\otimes\mathcal M^\infty,
 \qquad
 \varphi^\infty
 =
 \int_G\otimes\tau^\infty.
\]

\begin{theorem}\label{pro:stein extrapolation}
Let $1<p\leq2$. Suppose that there exists $B>0$ such that, for every
finite von Neumann algebra $(\mathcal M,\tau)$,
\begin{equation}\label{L18 bound}
 \big\|(A_r(f))_{r\in\mathcal I}\big\|_
 {L_1(\mathcal A;\ell_\infty)}
 \leq B\|f\|_{L_p(\mathcal A)},
 \qquad f\in L_p(\mathcal A).
\end{equation}
Then, for every finite von Neumann algebra $(\mathcal M,\tau)$,
\begin{equation}\label{Lambdap8 bound}
 \big\|(A_r(f))_{r\in\mathcal I}\big\|_
 {\Lambda_p(\mathcal A;\ell_\infty)}
 \leq CB\|f\|_{L_p(\mathcal A)},
 \qquad f\in L_p(\mathcal A),
\end{equation}
where $C$ is an absolute constant.
\end{theorem}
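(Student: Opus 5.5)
We argue by contradiction, following Stein's scheme, with projections in place of level sets and a tensor-product construction that makes them commute. The weak maximal quasi-norm $\Lambda_p(\mathcal A;\ell_\infty)$ (recalled later in \eqref{phi_inf}) is controlled by the following quantity: for $\lambda>0$, the infimum of $\varphi(1-e)$ over projections $e$ such that $e A_r(f) e\le\lambda$ for all $r$. By homogeneity in $f$ and a standard reduction to positive $f$ (split $f$ into four positive parts), the claim is then
\[
\lambda^p\,\inf_e\varphi(1-e)\le (CB)^p\|f\|_p^p .
\]
We argue that if this fails with a huge constant, it also fails for arbitrarily bad $f$ on a suitable algebra, and that this contradicts \eqref{L18 bound}.

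\emph{Step 1: bad functions.} Suppose that for each $k$ there are a finite algebra $\mathcal M_k$ and a positive $f_k\in L_p(\mathcal A_k)$ with $\|f_k\|_p=1$, together with a level $\lambda_k$, such that every projection $e$ satisfying $eA_r(f_k)e\le\lambda_k$ for all $r$ has $\varphi(1-e)\ge\delta_k$. Here we choose $\delta_k\lambda_k^p\ge 4^k$. By the Kolmogorov--Calder\'on trick, we replace $f_k$ by translates $\alpha_{g}f_k$ with independent random $g\in G$. Because $A_r$ commutes with $\alpha_g$ and the Haar measure is invariant, a translated function has the same bad set, only moved.

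\emph{Step 2: making the pieces commute.} We place $f_k$ in the $k$-th tensor factor of $\mathcal M^\infty=\overline\bigotimes_k\mathcal M_k$ and set
\[
F=\sum_k c_k\,\alpha_{g_k}f_k,\qquad c_k=2^{-k/p}\delta_k^{1/p}.
\]
With this choice $\sum_k c_k^p\|f_k\|_p^p<\infty$. Using $p\le2$ and independence, the tensor factors behave like independent random variables, so a Rosenthal/Khintchine-type bound in $L_p$ keeps $\|F\|_p\lesssim1$ without needing the $\ell_1$ sum of the $c_k$. Applying \eqref{L18 bound} on $\mathcal A^\infty$ then gives $\|\sup^+_r A_rF\|_1\le B'$ for a fixed constant $B'$. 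Since $A_r$ is positive, $A_rF\ge c_kA_r(\alpha_{g_k}f_k)$ for every $k$. Consequently a single dominating $a\in L_1(\mathcal A^\infty)_+$ dominates every piece $c_kA_r(\alpha_{g_k}f_k)$.

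\emph{Step 3: Borel--Cantelli and the contradiction.} Take the projections $q_k=\chi_{(\lambda_kc_k,\infty)}$ of the relevant operators. These live in different tensor factors, with the $L_\infty(G)$ part randomized by translation, and so they commute. Proposition~\ref{prop:borel-cantelli} then yields that $\bigvee_{k\ge m}q_k$ has large trace, because the traces $\varphi(q_k)\ge\delta_k$ sum to infinity in a suitable normalization. Next we apply the conditional expectation onto the commutative von Neumann algebra generated by the $q_k$. This turns the operator inequalities $a\ge c_k q_kA_r(\cdot)q_k$ into scalar inequalities: on a set of positive measure the conditional expectation of $a$ exceeds $c_k\lambda_k\to\infty$ for infinitely many $k$. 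That contradicts $a\in L_1$, since conditional expectations are $L_1$-contractions.

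\emph{Main obstacle.} The hardest part is passing from "for all $e$ with $eA_r(f_k)e\le\lambda_k$ we have $\varphi(1-e)\ge\delta_k$" to a genuine operator lower bound on $a$ along a projection. One needs something like: if $a\ge A_r(f_k)$ for all $r$, then $\chi_{(\lambda_k,\infty)}(a)$ has trace at least $\delta_k$. This follows by taking $e=\chi_{[0,\lambda_k]}(a)$, which gives $eA_r(f_k)e\le eae\le\lambda_k$. So the natural projections are spectral projections of the dominating function $a$ itself, rather than of the individual $A_r f_k$. Making the translates independent enough for Proposition~\ref{prop:borel-cantelli} to apply to these spectral projections is where the $p\le2$ hypothesis and the tensor construction are both used.
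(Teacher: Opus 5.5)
Your proposal has a genuine gap, in two linked places.

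\emph{Divergence versus norm control.} Nothing in your normalization makes $\sum_k\varphi(q_k)$ diverge. Your $\delta_k$ is whatever value of $\Phi_{\inf}$ the failing pair $(f_k,\lambda_k)$ happens to give, and $\delta_k\lambda_k^p\ge4^k$ is compatible with $\sum_k\delta_k<\infty$ (say $\delta_k=4^{-k^2}$ with $\lambda_k$ huge). Getting divergence requires Burkholder's selection, as in the paper: $\lambda_k\to\infty$, $\sum_ka_k^p\le1$ and $\sum_k(a_k/\lambda_k)^pM(\lambda_k/a_k)=\infty$. In general this forces many repetitions of the same bad function, and hence $\sum_ka_k=\infty$.

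Your $c_k\le2^{-k/p}$ are $\ell_1$-summable. That is why $\|F\|_p<\infty$ costs you nothing, and also why divergence fails. With merely $\ell_p$-summable coefficients, a positive sum is out of control: $\|F\|_p\ge\|F\|_1=\sum_kc_k\|f_k\|_1$ on a probability space. No Khintchine/Rosenthal inequality removes this mean term for positive, uncentred summands, which moreover share the $G$-variable and so are not even independent.

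The paper inserts Rademacher signs, so that $\|\sum_k\varepsilon_ka_k\gamma_k\|_p\lesssim(\sum_ka_k^p)^{1/p}$ by the type $p$ of $L_p$. This, not Borel--Cantelli, is where $p\le2$ enters. Since the domination $A_rF\ge c_kA_r(\cdot)$ is then lost, the paper passes to the commutative algebra generated by the moved projections and the signs, and uses Burkholder's Lemma~1 (a L\'evy-type symmetrization) with Fubini.

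Alternatively, your positive scheme can be saved by replacing $F$ with $y=(\sum_kc_k^p\gamma_k^p)^{1/p}$, where $\gamma_k$ are the moved copies of $f_k$. By L\"owner--Heinz, $y\ge c_k\gamma_k$ for every $k$, and $\|y\|_p^p=\sum_kc_k^p\|\gamma_k\|_p^p$. So domination survives with $\ell_p$ coefficients, no sign argument is needed, and $p\le2$ is not used.

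\emph{The projections.} The projections fed into Proposition~\ref{prop:borel-cantelli} must be attached to each $f_k$ alone, because they are moved before $F$, and hence any dominating element, exists. They must also carry an operator lower bound that survives the embedding into the $k$-th tensor coordinate, translation and conjugation. $\Phi_{\inf}$ gives no such projection: it only says that every $e$ with $eA_r(f_k)e\le\lambda$ has large $\varphi(e^\perp)$, with no lower bound along $e^\perp$.

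Your substitute $q_k=\chi_{(c_k\lambda_k,\infty)}(a)$ fails for three reasons. It is circular, since $a$ depends on $F$, i.e.\ on the translations that Borel--Cantelli is supposed to produce. These are spectral projections of a single $a\in L_1$, so $\bigvee_{k\ge m}q_k$ has trace at most $\|a\|_1/\inf_{k\ge m}c_k\lambda_k\to0$ and can never equal $1$. And all they give is the Chebyshev bound $\delta_kc_k\lambda_k\le\|a\|_1$, a weak-$L_1$ statement that contradicts nothing.

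The missing idea is Lemma~\ref{sup>inf}, i.e.\ working with $\Phi_{\sup}$. This gives a maximal orthogonal family $(e_{k,i})_i$ with $e_{k,i}A_{r_{k,i}}(f_k)e_{k,i}>(\lambda_k/a_k)e_{k,i}$ and $\sum_i\varphi(e_{k,i})\ge\Phi_{\inf}$. These projections are moved by Proposition~\ref{prop:borel-cantelli}, which requires a II$_1$ factor and uses unitary conjugations as well as translations; hence $\mathcal M*L(\mathbb F_2)$ in the paper. They commute across different $k$ by the tensor structure and within fixed $k$ by orthogonality. After the conditional expectation they yield \eqref{est9}, hence $\sup_r|a_k\mathbb E(A_r(\gamma_k))|\ge\lambda_ke_k''$ with $\bigvee_{k\ge\ell}e_k''=1$.
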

Since $(A_r(f))_{r\in \mathcal I}$ has a bounded cardinal, we can reduce to von Neumann algebras $\mathcal M$ with a bounded cardinality, so that the statement is really about sets.

The symbol $\mathrm{sup}^{+}$ appearing in the definitions of
$\Phi_{\inf}$ and $\Phi_{\sup}$ below is part of the notation for
the corresponding distribution functions.

For a family $(f_n)_n\subset L_0(\mathcal A)_+$, define
\[
 \big\|(f_n)_n\big\|_{\Lambda_p(\mathcal A;\ell_\infty)}
 :=
 \sup_{\lambda>0}
 \lambda\,
 \Phi_{\inf}\left(
 \mathop{\mathrm{sup}^{+}}\limits_n f_n>\lambda
 \right)^{1/p},
\]
where
\begin{equation}\label{phi_inf}
 \Phi_{\inf}\left(
 \mathop{\mathrm{sup}^{+}}\limits_n f_n>\lambda
 \right)
 :=
 \inf\left\{
 \varphi(e^\perp):
 \begin{array}{l}
 e\in\mathcal P(\mathcal A),\\
 \|ef_ne\|_\infty\leq\lambda
 \text{ for every }n
 \end{array}
 \right\}.
\end{equation}
Here, $\mathcal P(\mathcal A)$ denotes the set of projections in
$\mathcal A$.

For positive families, we shall also use the quantity introduced
by Cadilhac \cite{Cadilhac}:
\begin{equation*}
\begin{aligned}
 \Phi_{\sup}\left(
 \mathop{\mathrm{sup}^{+}}\limits_n f_n>\lambda
 \right)
 :=
 \sup\Biggl\{
 &\sum_k\varphi(e_k):
 (e_k)_k\subset\mathcal P(\mathcal A)
 \text{ is pairwise orthogonal},\\
 &\text{and for every }k\text{ there exists }n
 \text{ such that }
 e_kf_ne_k>\lambda e_k
 \Biggr\}.
\end{aligned}
\end{equation*}

\begin{lemma}\label{sup>inf}
Let $(f_n)_n\subset L_0(\mathcal A)_+$. Then, for every $\lambda>0$,
\[
 \Phi_{\sup}\left(
 \mathop{\mathrm{sup}^{+}}\limits_n f_n>\lambda
 \right)
 \geq
 \Phi_{\inf}\left(
 \mathop{\mathrm{sup}^{+}}\limits_n f_n>\lambda
 \right).
\]
\end{lemma}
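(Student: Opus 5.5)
The plan is a maximality argument. We build, by Zorn's lemma, a maximal pairwise orthogonal family of projections of the kind appearing in the definition of $\Phi_{\sup}$. Then we show that the complement of its sum is admissible in the definition of $\Phi_{\inf}$. If $\Phi_{\sup}\bigl(\mathop{\mathrm{sup}^{+}}_n f_n>\lambda\bigr)=\infty$ there is nothing to prove, so we may assume it is finite.

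\textbf{Step 1 (maximal family).} Consider the collection of all pairwise orthogonal families $(e_k)_k\subset\mathcal P(\mathcal A)$ such that for every $k$ there is $n$ with $e_kf_ne_k>\lambda e_k$. Order it by inclusion. The empty family belongs to it, and the union of a chain is again such a family. Zorn's lemma therefore gives a maximal family $(e_k)_k$. Put $e:=\one-\sum_k e_k$. Since $\varphi$ is normal,
\[
\varphi(e^\perp)=\sum_k\varphi(e_k)\le\Phi_{\sup}\Bigl(\mathop{\mathrm{sup}^{+}}_n f_n>\lambda\Bigr).
\]

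\textbf{Step 2 (admissibility of $e$).} It remains to show that $\|ef_ne\|_\infty\le\lambda$ for every $n$; then $\Phi_{\inf}\le\varphi(e^\perp)$ and the lemma follows. Suppose instead that $\|ef_ne\|_\infty>\lambda$ for some $n$, where the norm is allowed to be $+\infty$. The operator $ef_ne$ is a positive $\tau$-measurable operator, and $\|ef_ne\|_\infty=\infty$ is allowed here. In either case there is $\varepsilon>0$ such that the spectral projection
\[
q:=\chi_{(\lambda+\varepsilon,\infty)}(ef_ne)
\]
is nonzero. Because $ef_ne=e(ef_ne)e$, its support lies under $e$, so $q\le e$. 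In particular $q$ is orthogonal to every $e_k$. Moreover $qeq$-compression gives $qf_nq=q(ef_ne)q\ge(\lambda+\varepsilon)q$, hence $qf_nq>\lambda q$ in any reasonable strict sense, including $qf_nq-\lambda q\ge\varepsilon q$. Thus $(e_k)_k\cup\{q\}$ is a strictly larger admissible family, contradicting maximality.

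\textbf{Main obstacle.} The only delicate point is the meaning of the strict inequality $e_kf_ne_k>\lambda e_k$ for unbounded $\tau$-measurable $f_n$. Choosing the spectral threshold $\lambda+\varepsilon$ instead of $\lambda$ makes the new projection satisfy the strongest version, $qf_nq\ge(\lambda+\varepsilon)q$. For this choice to be legitimate we need $\|ef_ne\|_\infty>\lambda$ to imply that $\chi_{(\lambda+\varepsilon,\infty)}(ef_ne)\neq0$ for some $\varepsilon>0$. This holds by the spectral theorem for the closed positive operator $ef_ne$, whose operator norm is the supremum of its spectrum.
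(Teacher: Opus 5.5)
Your proof is correct and follows the paper's argument. Both take a Zorn-maximal orthogonal family, show its complement $e$ is admissible for $\Phi_{\inf}$, and do so by extracting a nonzero spectral projection of $ef_ne$ under $e$ that would otherwise extend the family. Your threshold $\lambda+\varepsilon$, where the paper uses $\mathbf 1_{(\lambda,\infty)}(ef_{n_0}e)$ directly, is a harmless refinement that makes the strict inequality hold in its strongest (uniform) sense.
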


If $\mathcal A$ is commutative, the inequality above becomes an
equality.

\begin{proof}
By Zorn's lemma, choose a maximal pairwise orthogonal family
$(e_k)_k$ such that, for every $k$, there exists an index $n(k)$
satisfying
\[
 e_kf_{n(k)}e_k>\lambda e_k.
\]
Since $\varphi$ is finite and faithful, the nonzero members of this
family form an at most countable collection. Put
\[
 e=\left(\bigvee_ke_k\right)^\perp.
\]
We claim that
\[
 ef_ne\leq\lambda e
 \qquad\text{for every }n.
\]
Indeed, suppose that this fails for some $n_0$. Then the spectral
projection
\[
 e'
 =
 \mathbf 1_{(\lambda,\infty)}(ef_{n_0}e)
\]
is a nonzero subprojection of $e$ and satisfies
\[
 e'f_{n_0}e'>\lambda e'.
\]
Since $e'$ is orthogonal to every $e_k$, adjoining $e'$ to the
family $(e_k)_k$ contradicts maximality.

Consequently, $e$ is admissible in the definition of
$\Phi_{\inf}$, and hence
\[
 \Phi_{\inf}\left(
 \mathop{\mathrm{sup}^{+}}\limits_n f_n>\lambda
 \right)
 \leq\varphi(e^\perp)
 =\sum_k\varphi(e_k)
 \leq
 \Phi_{\sup}\left(
 \mathop{\mathrm{sup}^{+}}\limits_n f_n>\lambda
 \right).
\]
\end{proof}

We next establish an ergodic version of the Borel--Cantelli lemma largely inspired by Sawyer \cite{Saw}.

\begin{proposition}\label{prop:borel-cantelli}
Let $\mathcal M$ be a $\mathrm{II}_1$ factor and let
$\mathcal U$ be a subgroup of the unitary group of $\mathcal M$
such that
\[
 \mathcal U' \cap\mathcal M=\mathbb C1.
\]
Let $(e_k)_{k\geq1}$ be a sequence of projections in
$ \mathcal A=L_\infty(G)\overline\otimes\mathcal M$ such that
\[
 \sum_{k\geq1}\varphi(e_k)=\infty.
\]
Then there exist $(g_k)_{k\geq1}\subset G$ and
$(u_k)_{k\geq1}\subset\mathcal U$ such that, for every $\ell\geq1$,
\[
 \bigvee_{k\geq\ell}
 \alpha_{g_k}(u_k^*e_ku_k)
 =1.
\]
\end{proposition}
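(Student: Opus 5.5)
The plan is a greedy construction, run in consecutive blocks of indices. Inside each block, each new translate-conjugate of $e_k$ is chosen to cut the trace of the uncovered part by a factor of roughly $1-\varphi(e_k)$. Divergence of $\sum_k\varphi(e_k)$ then forces the uncovered part of every tail to have trace $0$. Throughout, $\varphi$ is a state, since $\mu$ is normalized and $\tau$ is a tracial state, so $0\le\varphi(e_k)\le1$.

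\emph{Step 1: a one-step trace inequality.} Let $q$ be a projection (the currently uncovered part) and $f$ a projection. We want to bound the new uncovered part $q\wedge f^\perp=(q^\perp\vee f)^\perp$. Put $r=q\wedge f^\perp$. Then $r\le q$ and $rf=0$, so $r\,(qfq)=rfq=0$. Hence $qfq$ is supported on $q-r$, and since $\|qfq\|\le1$ we get $\varphi(qfq)\le\varphi(q-r)$. This gives
\[
 \varphi(q\wedge f^\perp)\le\varphi(q)-\varphi(qfq).
\]

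\emph{Step 2: averaging.} Fix a projection $e\in\mathcal A$. The maps $x\mapsto\alpha_g(u^*xu)$, for $g\in G$ and $u\in\mathcal U$, are unitary operators on $L_2(\mathcal A)$ forming a group. By the mean ergodic theorem in Alaoglu--Birkhoff form, the $L_2$-closed convex hull of the orbit of $e$ contains the projection of $e$ onto the vectors fixed by this group. An element fixed by all $\alpha_g$ is constant in the $G$-variable, i.e.\ lies in $1\otimes\mathcal M$. If it is also fixed by $\mathrm{Ad}\,\mathcal U$, it lies in $\mathcal U'\cap\mathcal M=\mathbb C1$. So the projection of $e$ is $\varphi(e)1$. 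Since $x\mapsto\varphi(qxq)$ is $L_2$-continuous and linear, we can choose finitely many $(g_i,u_i)$ and convex weights whose combination has $\varphi(q\,\cdot\,q)$ within $\varepsilon$ of $\varphi(e)\varphi(q)$. Some single term must then satisfy
\[
 \varphi\big(q\,\alpha_g(u^*eu)\,q\big)\ge\varphi(e)\varphi(q)-\varepsilon.
\]
Combining this with Step 1, $q'=q\wedge(\alpha_g(u^*eu))^\perp$ satisfies $\varphi(q')\le(1-\varphi(e))\varphi(q)+\varepsilon$.

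\emph{Step 3: blocks.} Since $\sum_k\varphi(e_k)=\infty$, partition $\mathbb N$ into consecutive finite blocks $B_j=[m_j,m_{j+1})$ with $\sum_{k\in B_j}\varphi(e_k)\ge j$. Inside block $B_j$, start from $q=1$ and apply Step 2 successively to $e_k$, $k\in B_j$, with errors $\varepsilon_k$ satisfying $\sum_{k\in B_j}\varepsilon_k\le2^{-j}$. Iterating the recursion and using $1-x\le e^{-x}$, the final uncovered projection is $\big(\bigvee_{k\in B_j}\alpha_{g_k}(u_k^*e_ku_k)\big)^\perp$, and its trace is at most
\[
 \prod_{k\in B_j}(1-\varphi(e_k))+2^{-j}\le e^{-j}+2^{-j}.
\]
For a given $\ell$, every block $B_j$ with $m_j\ge\ell$ lies in the tail $[\ell,\infty)$. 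Therefore
\[
 \varphi\Big(\big(\textstyle\bigvee_{k\ge\ell}\alpha_{g_k}(u_k^*e_ku_k)\big)^\perp\Big)\le e^{-j}+2^{-j}
\]
for all large $j$, so this trace is $0$. Faithfulness of $\varphi$ then gives $\bigvee_{k\ge\ell}\alpha_{g_k}(u_k^*e_ku_k)=1$.

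The main obstacle is Step 2. One has to justify that averaging over the possibly non-amenable, non-compact group $\mathcal U$ together with $G$ yields $\varphi(e)1$. I expect this to be handled by the $L_2$ mean ergodic theorem as above rather than by a Dixmier-type norm averaging. One should also check that passing from a convex combination to a single $(g,u)$ is legitimate, which it is because the functional $x\mapsto\varphi(qxq)$ is linear. The $\mathrm{II}_1$ factor hypothesis is used only to ensure $\tau$ is a normalized faithful trace; the triviality of $\mathcal U'\cap\mathcal M$ is what makes the fixed-point space one-dimensional.
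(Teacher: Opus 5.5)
Your proof is correct and follows essentially the same route as the paper. Lemma~\ref{lem:borel-cantelli0} is your one-step estimate: it uses a multiplicative loss $\theta>1$ instead of your additive $\varepsilon$, and it is proved by contradiction via the minimal-norm element of the closed convex hull of the orbit, which is exactly the mechanism behind the Alaoglu--Birkhoff statement you invoke. Your Step~1 is the paper's inequality $\varphi(p_1\wedge p_2)\le\varphi(p_1p_2)$, since $\varphi(q)-\varphi(qfq)=\varphi(qf^\perp)$, and the block argument is the same. One detail is worth stating explicitly: the projection of $e$ onto the fixed vectors is a bounded operator, because it lies in the $L_2$-closure of the operator-norm unit ball, so the hypothesis $\mathcal U'\cap\mathcal M=\mathbb C1$ applies to it directly.
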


We postpone the proof of Proposition
\ref{prop:borel-cantelli} until the end of this section and first
prove Theorem \ref{pro:stein extrapolation}.

\begin{proof}[Proof of Theorem \ref{pro:stein extrapolation}]
By homogeneity, replacing $A_r$ by $B^{-1}A_r$, we may assume that
$B=1$. By decomposing an arbitrary element into four positive
parts, it is enough to prove the required estimate for
$f\in L_p(\mathcal A)_+$.

In view of Lemma \ref{sup>inf}, it suffices to prove that
\[
 \sup_{\lambda>0}
 \lambda^p
 \Phi_{\sup}\left(
 \mathop{\mathrm{sup}^{+}}\limits_{r\in\mathcal I}A_r(f)>\lambda
 \right)
 \lesssim1
\]
whenever $f\in L_p(\mathcal A)_+$ and $\|f\|_p\leq1$.

Define
\[
 M(\lambda)
 :=
 \sup_{\substack{f\in L_p(\mathcal A)_+\\\|f\|_p\leq1}}
 \lambda^p
 \Phi_{\sup}\left(
 \mathop{\mathrm{sup}^{+}}\limits_{r\in\mathcal I}A_r(f)>\lambda
 \right).
\]
Since $\varphi(1)=1$, we have
\[
 M(\lambda)\leq\lambda^p.
\]
Thus, if $M$ is unbounded, then necessarily
\[
 \limsup_{\lambda\to\infty}M(\lambda)=\infty.
\]

Assume, toward a contradiction, that $M$ is unbounded. The standard
selection argument used in \cite[p.~78]{Bur64} provides sequences
$(a_k)_{k\geq1}$ and $(\lambda_k)_{k\geq1}$ of positive numbers such
that
\[
 \lambda_k\longrightarrow\infty,
 \qquad
 \sum_{k\geq1}a_k^p\leq1,
\]
and
\[
 \sum_{k\geq1}
 \left(\frac{a_k}{\lambda_k}\right)^p
 M\left(\frac{\lambda_k}{a_k}\right)
 =\infty.
\]

For every $k$, choose $f_k\in L_p(\mathcal A)_+$ with
$\|f_k\|_p\leq1$ such that
\[
 \Phi_{\sup}\left(
 \mathop{\mathrm{sup}^{+}}\limits_{r\in\mathcal I}A_r(f_k)>
 \frac{\lambda_k}{a_k}
 \right)
 \geq
 \left(\frac{a_k}{\lambda_k}\right)^p
 M\left(\frac{\lambda_k}{a_k}\right)-2^{-k}.
\]
It follows that
\begin{equation}\label{est1}
 \sum_{k\geq1}
 \Phi_{\sup}\left(
 \mathop{\mathrm{sup}^{+}}\limits_{r\in\mathcal I}A_r(f_k)>
 \frac{\lambda_k}{a_k}
 \right)
 =\infty.
\end{equation}

For every $k$ we may choose a pairwise
orthogonal family $(e_{k,i})_i$ such that
\[
 \sum_i\varphi(e_{k,i})
 \geq
 \Phi_{\sup}\left(
 \mathop{\mathrm{sup}^{+}}\limits_{r\in\mathcal I}A_r(f_k)>
 \frac{\lambda_k}{a_k}
 \right)-2^{-k},
\]
and such that, for every $i$, there exists
$r_{k,i}\in\mathcal I$ satisfying
\begin{equation}\label{est2}
 e_{k,i}A_{r_{k,i}}(f_k)e_{k,i}
 >
 \frac{\lambda_k}{a_k}e_{k,i}.
\end{equation}
Set
\[
 e_k=\sum_i e_{k,i}.
\]
Equation \eqref{est1} then gives
\begin{equation}\label{est3}
 \sum_{k\geq1}\varphi(e_k)=\infty.
\end{equation}

We now enlarge the underlying von Neumann algebra. Let
\[
 \mathcal N=\mathcal M*L(\mathbb F_2)
\]
be the reduced free product with respect to the given traces. Then
$\mathcal N$ is a finite factor containing $\mathcal M$ as a
trace-preserving von Neumann subalgebra; see
\cite{TakesakiIII}. Let
\[
 \mathcal N^\infty
 =
 \overline{\bigotimes}_{k\geq1}\mathcal N,
 \qquad
 \mathcal A^\infty
 =
 L_\infty(G)\overline\otimes\mathcal N^\infty.
\]

Let
\[
 \Pi_k
 =
 \id_{L_\infty(G)}\otimes\pi_k:
 \mathcal A\longrightarrow\mathcal A^\infty
\]
be the trace-preserving embedding into the $k$-th tensor coordinate.
Thus, for elementary tensors,
\[
 \Pi_k(f\otimes m)
 =
 f\otimes
 1\otimes\cdots\otimes1\otimes m\otimes1\otimes\cdots,
\]
where $m$ occupies the $k$-th coordinate. Put
\[
 e_k'=\Pi_k(e_k),
 \qquad
 e_{k,i}'=\Pi_k(e_{k,i}).
\]
Because the noncommutative parts of these projections lie in
distinct tensor coordinates and their $G$-components belong to the
commutative algebra $L_\infty(G)$, all the projections
$(e_{k,i}')_{k,i}$ commute whenever their first indices are
different.

Since $\Pi_k$ commutes with every $A_r$, equation \eqref{est2}
implies
\begin{equation}\label{est4}
 e_{k,i}'A_{r_{k,i}}(\Pi_k(f_k))e_{k,i}'
 >
 \frac{\lambda_k}{a_k}e_{k,i}'.
\end{equation}
Moreover, \eqref{est3} gives
\[
 \sum_{k\geq1}\varphi^\infty(e_k')=\infty.
\]

Let $\mathcal U_0$ be the subgroup of the unitary group of
$\mathcal N^\infty$ consisting of finite tensor products:
\[
 \mathcal U_0
 :=
 \left\{
 \prod_{j\in F}\pi_j(v_j):
 F\subset\mathbb N\text{ finite},
 \ v_j\in\mathcal U(\mathcal N)
 \right\}.
\]
Since $\mathcal N$ is a factor,
\[
 \mathcal U_0'\cap\mathcal N^\infty=\mathbb C1.
\]
Applying Proposition \ref{prop:borel-cantelli} to $(e_k')_k$, we
obtain sequences $(g_k)_k\subset G$ and $(u_k)_k\subset\mathcal U_0$
such that
\begin{equation}\label{est5}
 \bigvee_{k\geq\ell}
 \alpha_{g_k}(u_k^*e_k'u_k)
 =1
 \qquad\text{for every }\ell\geq1.
\end{equation}

Define
\[
 e_k''
 :=
 \alpha_{g_k}(u_k^*e_k'u_k),
 \qquad
 e_{k,i}''
 :=
 \alpha_{g_k}(u_k^*e_{k,i}'u_k).
\]
Every $u_k\in\mathcal U_0$ normalizes each tensor-coordinate
algebra $\pi_j(\mathcal N)$. In particular,
\[
 u_k^*e_k'u_k
 \in
 L_\infty(G)\overline\otimes\pi_k(\mathcal N).
\]
It follows that the projections $e_k''$, and hence all
$e_{k,i}''$ with distinct first indices, commute.

Using
\[
 A_r\alpha_{g_k}=\alpha_{g_k}A_r,
 \qquad
 A_r(u_k^*hu_k)=u_k^*A_r(h)u_k,
 \qquad
 A_r\Pi_k=\Pi_kA_r,
\]
we deduce from \eqref{est4} that
\begin{equation}\label{est7}
 e_{k,i}''
 A_{r_{k,i}}\!
 \left(\alpha_{g_k}
 (u_k^*\Pi_k(f_k)u_k)\right)
 e_{k,i}''
 >
 \frac{\lambda_k}{a_k}e_{k,i}''.
\end{equation}

Set
\[
 \gamma_k
 :=
 \alpha_{g_k}(u_k^*\Pi_k(f_k)u_k).
\]
Then $\gamma_k\geq0$ and
\[
 \|\gamma_k\|_p=\|f_k\|_p\leq1.
\]

Let $(\varepsilon_k)_k$ be the canonical independent Rademacher
variables on $\Omega_\varepsilon=\{-1,1\}^{\mathbb N}$.

Since $1<p\leq2$, the noncommutative space
$L_p(\mathcal A^\infty)$ has Rademacher type $p$; see
\cite[Corollary~5.5]{PX03} and also \cite{Fack87}. Therefore,
\begin{equation*}
 \left\|
 \sum_{k\geq1}\varepsilon_ka_k\gamma_k
 \right\|_
 {L_p(\mathcal A^\infty\overline\otimes
 L_\infty(\Omega_\varepsilon))}
 \lesssim
 \left(
 \sum_{k\geq1}a_k^p\|\gamma_k\|_p^p
 \right)^{1/p}
 \leq1.
\end{equation*}

Apply \eqref{L18 bound} in the finite von Neumann algebra
\[
 \widetilde{\mathcal A}
 =
 \mathcal A^\infty
 \overline\otimes L_\infty(\Omega_\varepsilon)
\]
to the self-adjoint element
\[
 F=\sum_{k\geq1}\varepsilon_ka_k\gamma_k.
\]
By the order description of the
$L_1(\widetilde{\mathcal A};\ell_\infty)$ norm, there exists
$b\in L_1(\widetilde{\mathcal A})_+$ such that
\[
 \|b\|_1\lesssim1
\]
and
\[
 -b
 \leq
 \sum_{k\geq1}\varepsilon_ka_kA_r(\gamma_k)
 \leq b
 \qquad\text{for every }r\in\mathcal I.
\]

Let
\[
 \mathcal D_0
 =
 W^*(e_{k,i}'':k\geq1,\ i\geq1)
 \subset\mathcal A^\infty,
\]
and set
\[
 \mathcal D
 =
 \mathcal D_0
 \overline\otimes L_\infty(\Omega_\varepsilon).
\]
Here,
$\mathcal D_0$ denotes the von Neumann subalgebra of $\mathcal A^\infty$
generated by the projections $(e_{k,i}'')_{k,i}$. Equivalently, it is
the weak-operator closure of the unital $*$-algebra generated by
these projections.
The algebra $\mathcal D_0$ is commutative. Indeed, for each fixed
$k$, the projections $(e_{k,i}'')_i$ are pairwise orthogonal. If
$k\neq\ell$, then $e_{k,i}''$ and $e_{\ell,j}''$ belong to distinct
tensor coordinates and therefore commute. Consequently, $\mathcal D$
is commutative.

Let
\[
 \mathbb E:\widetilde{\mathcal A}\longrightarrow\mathcal D
\]
be the trace-preserving conditional expectation. Applying
$\mathbb E$ to the preceding inequalities gives
\[
 -\mathbb E(b)
 \leq
 \sum_{k\geq1}
 \varepsilon_ka_k\mathbb E(A_r(\gamma_k))
 \leq
 \mathbb E(b)
\]
for every $r\in\mathcal I$. Notice that
$\mathbb E(A_r(\gamma_k))$ does not depend on the Rademacher
variables.

We are now in a commutative probability space. By
\cite[Lemma~1, p.~78]{Bur64} and Fubini's theorem,
\[
 \limsup_{k\to\infty}
 \sup_{r\in\mathcal I}
 \bigl|a_k\mathbb E(A_r(\gamma_k))\bigr|
 <\infty
\qquad\text{almost everywhere}.
\]

On the other hand, since $e_{k,i}''\in\mathcal D$ and
$\mathbb E$ is $\mathcal D$-bimodular, applying $\mathbb E$ to
\eqref{est7} gives
\begin{equation}\label{est9}
 e_{k,i}''
 \mathbb E(A_{r_{k,i}}(\gamma_k))
 e_{k,i}''
 \geq
 \frac{\lambda_k}{a_k}e_{k,i}''.
\end{equation}
Since $\mathcal D$ is commutative, \eqref{est9} implies
\[
 \sup_{r\in\mathcal I}
 \bigl|a_k\mathbb E(A_r(\gamma_k))\bigr|
 \geq\lambda_ke_{k,i}''
 \qquad\text{for every }i.
\]
The projections $(e_{k,i}'')_i$ are pairwise orthogonal and satisfy
$e_k''=\sum_i e_{k,i}''$. Hence
\[
 \sup_{r\in\mathcal I}
 \bigl|a_k\mathbb E(A_r(\gamma_k))\bigr|
 \geq\lambda_k\sum_i e_{k,i}''
 =\lambda_ke_k''.
\]
By \eqref{est5},
\[
 \bigvee_{k\geq\ell}e_k''=1
 \qquad\text{for every }\ell\geq1.
\]
Since $\lambda_k\to\infty$, we obtain
\[
 \limsup_{k\to\infty}
 \sup_{r\in\mathcal I}
 \bigl|a_k\mathbb E(A_r(\gamma_k))\bigr|
 =\infty
 \qquad\text{almost everywhere},
\]
which is a contradiction. This proves
\eqref{Lambdap8 bound}.
\end{proof}

Proposition \ref{prop:borel-cantelli} follows from the next lemma whose proof is very similar to Lemma 1 in \cite{Saw}.

\begin{lemma}\label{lem:borel-cantelli0}
Let $\mathcal M$ and $\mathcal U$ satisfy the assumptions of
Proposition \ref{prop:borel-cantelli}. Let $p,q\in\mathcal A$ be
projections and let $\theta>1$. Then there exist $g\in G$ and
$u\in\mathcal U$ such that
\begin{equation*}
 \varphi\bigl(
 p\wedge\alpha_g(u^*qu)
 \bigr)
 \leq
 \theta\,\varphi(p)\varphi(q).
\end{equation*}
\end{lemma}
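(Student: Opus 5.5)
The plan is an averaging argument: compute the average of $\varphi(p\wedge\alpha_g(u^*qu))$ over $g\in G$ and over suitable unitaries $u$, and show this average is essentially $\varphi(p)\varphi(q)$. Some choice must then do at least as well up to the factor $\theta$.

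\emph{Step 1: from meets to products.} Meets are not linear, so I first bound them by a linear quantity. For projections $e,f$ we have $e\wedge f\le e$ and $e\wedge f\le f$, hence
\[
\varphi(e\wedge f)=\varphi\bigl((e\wedge f)f(e\wedge f)\bigr)\le \varphi(efe)=\varphi(ef).
\]
With $f=\alpha_g(u^*qu)$ it therefore suffices to find $g,u$ such that
\[
\varphi\bigl(p\,\alpha_g(u^*qu)\bigr)\le\theta\,\varphi(p)\varphi(q).
\]

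\emph{Step 2: averaging over $G$.} View $p$ and $q$ as functions $G\to\mathcal M$. Then
\[
\varphi(p\,\alpha_g(u^*qu))=\int_G\tau\bigl(p(h)\,u^*q(g^{-1}h)u\bigr)\,d\mu(h).
\]
Integrating in $g$ and using Fubini together with invariance of Haar measure gives
\[
\int_G\varphi(p\,\alpha_g(u^*qu))\,d\mu(g)=\tau\bigl(P\,u^*Qu\bigr),
\]
where $P=\int_G p(h)\,d\mu(h)$ and $Q=\int_G q(h)\,d\mu(h)$ are positive contractions in $\mathcal M$ with $\tau(P)=\varphi(p)$ and $\tau(Q)=\varphi(q)$. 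It is now enough to find $u\in\mathcal U$ with $\tau(Pu^*Qu)<\theta\,\tau(P)\tau(Q)$ when $\tau(P)\tau(Q)>0$; in the degenerate case any $u$ with value $0$ works. Once such a $u$ is fixed, the $g$-average is below $\theta\varphi(p)\varphi(q)$, so some $g$ satisfies the same bound.

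\emph{Step 3: averaging over $\mathcal U$ (main obstacle).} The goal here is
\[
\inf_{u\in\mathcal U}\tau(Pu^*Qu)\le\tau(P)\tau(Q).
\]
Since $\mathcal U$ need not be amenable, I cannot simply average. Instead I use the relative Dixmier property. Consider $K=\overline{\mathrm{conv}}^{\,\|\cdot\|_2}\{u^*Qu:u\in\mathcal U\}$, a closed convex subset of $L_2(\mathcal M)$, and let $Q_0$ be its unique element of minimal $\|\cdot\|_2$-norm. The set $K$ is invariant under $x\mapsto v^*xv$ for $v\in\mathcal U$, and this map is isometric, so uniqueness forces $v^*Q_0v=Q_0$ for all $v\in\mathcal U$. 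Hence $Q_0\in\mathcal U'\cap\mathcal M=\mathbb C1$, and since the trace is constant on $K$, $Q_0=\tau(Q)1$.

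The functional $x\mapsto\tau(Px)$ is $\|\cdot\|_2$-continuous because $P\in\mathcal M$, and it is affine. So there exist convex combinations $\sum_j t_ju_j^*Qu_j$ with
\[
\sum_j t_j\,\tau(Pu_j^*Qu_j)\ \text{ arbitrarily close to }\ \tau(P)\tau(Q)
\]
(take them within $\|\cdot\|_2$-distance $\delta$ of $Q_0$, with $\delta$ small compared with $(\theta-1)\tau(P)\tau(Q)/\|P\|_2$). Some $u_j$ then gives $\tau(Pu_j^*Qu_j)<\theta\,\tau(P)\tau(Q)$.

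Combining Steps 1–3 yields the lemma. Proposition~\ref{prop:borel-cantelli} should then follow by iterating this lemma, as in Sawyer's argument.
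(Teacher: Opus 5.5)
Your proof is correct. Its core mechanism is the paper's: the minimal-norm element of an $L_2$-closed convex hull of an orbit is fixed by the group. It therefore lies in $\mathcal U'\cap\mathcal M=\mathbb C1$, and trace preservation forces it to equal $\tau(Q)1$.

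The difference is in how you treat the translation action. The paper runs a single minimal-norm argument in $L_2(\mathcal A)$ on the joint orbit $\{\alpha_g(u^*qu)\}$, argues by contradiction, and uses that the fixed points of the translation action are $1\otimes\mathcal M$. You instead first average over the Haar probability measure of the compact group $G$. This replaces $p,q$ by their conditional expectations $P,Q$ onto $\mathcal M$ and reduces the problem to $\tau(Pu^*Qu)$. You then run the minimal-norm argument only for $\mathcal U$ inside $L_2(\mathcal M)$, and conclude directly by approximating $\tau(Q)1$ with convex combinations, with no contradiction.

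What each route buys: yours is slightly more constructive and separates the roles of the two symmetries cleanly, but it relies on the invariant probability measure on $G$. The paper's joint argument needs no averaging over $G$, only the identification of the fixed points of the translation action. It would therefore work verbatim for any ergodic measure-preserving action in place of translation on a compact group.

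One step you leave implicit (as does the paper): applying $\mathcal U'\cap\mathcal M=\mathbb C1$ requires $Q_0\in\mathcal M$, not merely $Q_0\in L_2(\mathcal M)$. This holds because $K$ is contained in the $\|\cdot\|_2$-closed set of positive contractions of $\mathcal M$.
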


\begin{proof}
For arbitrary projections $p_1,p_2\in\mathcal A$, one has
\[
 \varphi(p_1\wedge p_2)\leq\varphi(p_1p_2).
\]
Thus it is enough to find $g\in G$ and $u\in\mathcal U$ such that
\begin{equation}\label{bc1}
 \varphi\bigl(p\,\alpha_g(u^*qu)\bigr)
 \leq
 \theta\,\varphi(p)\varphi(q).
\end{equation}

If either $p=0$ or $q=0$, the conclusion is immediate. Assume
therefore that $\varphi(p)\varphi(q)>0$. Suppose, toward a
contradiction, that \eqref{bc1} fails for every $g\in G$ and
$u\in\mathcal U$.

Let $\mathcal C$ be the closed convex hull in $L_2(\mathcal A)$ of
the set
\[
 \left\{
 \alpha_g(u^*qu):
 g\in G,\ u\in\mathcal U
 \right\}.
\]
Every $f\in\mathcal C$ satisfies
\begin{equation}\label{bc2}
 \varphi(f)=\varphi(q)
\end{equation}
and
\begin{equation}\label{bc3}
 \varphi(pf)
 \geq
 \theta\,\varphi(p)\varphi(q).
\end{equation}
Because $\mathcal U$ is a group, $\mathcal C$ is invariant under
all transformations
\[
 f\longmapsto\alpha_g(u^*fu),
 \qquad g\in G,\quad u\in\mathcal U.
\]

The set $\mathcal C$ is a nonempty closed convex subset of the
Hilbert space $L_2(\mathcal A)$. Hence it contains a unique element
$s$ of minimal $L_2$ norm. For every $g\in G$ and $u\in\mathcal U$,
the element $\alpha_g(u^*su)$ also belongs to $\mathcal C$ and has
the same $L_2$ norm as $s$. By uniqueness,
\begin{equation}\label{bc4}
 s=\alpha_g(u^*su)
 \qquad
 \text{for all }g\in G,\ u\in\mathcal U.
\end{equation}
Taking $u=1$ in \eqref{bc4}, we obtain
\[
 \alpha_g(s)=s
 \qquad\text{for every }g\in G.
\]
The fixed-point algebra of the translation action on
\[
 \mathcal A=L_\infty(G)\overline\otimes\mathcal M
\]
is $1\otimes\mathcal M$, which we identify with $\mathcal M$.
Therefore, $s\in\mathcal M$.
Taking $g=e$ in \eqref{bc4}, we further obtain
\[
 u^*su=s
 \qquad\text{for every }u\in\mathcal U.
\]
Equivalently, $us=su$ for every $u\in\mathcal U$, and hence
\[
 s\in\mathcal U'\cap\mathcal M=\mathbb C1.
\]
Thus $s=c1$ for some scalar $c$.

By \eqref{bc2},
\[
 c=\varphi(q).
\]
Consequently,
\[
 \varphi(ps)=\varphi(p)\varphi(q),
\]
contradicting \eqref{bc3}, since $\theta>1$ and
$\varphi(p)\varphi(q)>0$. This proves the lemma.
\end{proof}

\begin{proof}[Proof of Proposition \ref{prop:borel-cantelli}]
Put
\[
 p_k=e_k^\perp,
 \qquad k\geq1.
\]
Since
\[
 \sum_{k\geq1}\varphi(e_k)=\infty,
\]
then
\[
 \prod_{k\geq m}\varphi(p_k)
 =
 \prod_{k\geq m}\bigl(1-\varphi(e_k)\bigr)
 =0
 \qquad\text{for every }m\geq1.
\]

We divide $\mathbb N$ into successive finite intervals
\[
 I_j=\{N_j,N_j+1,\ldots,N_{j+1}-1\},
 \qquad j\geq1,
\]
chosen so that
\begin{equation*}
 \prod_{k\in I_j}\varphi(p_k)\leq2^{-j-1}.
\end{equation*}
For each $j$, choose numbers $\theta_{j,k}>1$, $k\in I_j$, such
that
\[
 \prod_{k\in I_j}\theta_{j,k}\leq2.
\]

Fix a block $I_j$. Applying Lemma
\ref{lem:borel-cantelli0} successively within this finite block, we
can choose $g_k\in G$ and $u_k\in\mathcal U$, $k\in I_j$, such that
\begin{equation}\label{bc:block-meet}
 \varphi\left(
 \bigwedge_{k\in I_j}
 \alpha_{g_k}(u_k^*p_ku_k)
 \right)
 \leq
 \left(\prod_{k\in I_j}\theta_{j,k}\right)
 \left(\prod_{k\in I_j}\varphi(p_k)\right)
 \leq2^{-j}.
\end{equation}
Indeed, after the first $m$ projections have been chosen, apply
Lemma \ref{lem:borel-cantelli0} with $p$ equal to their meet and
$q$ equal to the next projection.

For $\ell\geq1$, set
\[
 F_\ell
 :=
 \bigwedge_{k\geq\ell}
 \alpha_{g_k}(u_k^*p_ku_k).
\]
For every block $I_j$ contained in $\{k:k\geq\ell\}$, one has
\[
 F_\ell
 \leq
 \bigwedge_{k\in I_j}
 \alpha_{g_k}(u_k^*p_ku_k).
\]
It follows from \eqref{bc:block-meet} that
\[
 \varphi(F_\ell)\leq2^{-j}
\]
for arbitrarily large $j$. Hence $\varphi(F_\ell)=0$, and the
faithfulness of $\varphi$ gives $F_\ell=0$.\\
Taking orthogonal complements, we obtain
\[
 \bigvee_{k\geq\ell}
 \alpha_{g_k}(u_k^*e_ku_k)
 =1
 \qquad\text{for every }\ell\geq1.
\]
This proves Proposition \ref{prop:borel-cantelli}.
\end{proof}

\section{A combinatorial method for noncommutative product estimates}
\label{combinatorial method}
Throughout this section, $(\N,\tr)$ denotes a finite von Neumann
algebra equipped with a normal faithful tracial state; in particular,
\(
 \tr(\one)=1.
\)

Let $q>2$ be an integer and
$(a_i)_{i\in I}\subset\N_+$, put $A=\sum_{i\in I}a_i.$
Expanding $\tr(A^q)$, we write
\[
 \|A\|_q^q
 =
 \Sigma_{\mathrm{rep}}+\Sigma_{\mathrm{inj}},
\]
where
\[
 \Sigma_{\mathrm{rep}}
 =
 \sum_{\substack{
 j_1,\ldots,j_q\in I\\
 \exists\,1\leq r<s\leq q\ \text{such that }j_r=j_s
 }}
 \tr(a_{j_1}\cdots a_{j_q}),
\]
and
\[
 \Sigma_{\mathrm{inj}}
 =
 \sum_{\substack{
 j_1,\ldots,j_q\in I\\
 j_r\neq j_s\ \text{for all }1\leq r<s\leq q
 }}
 \tr(a_{j_1}\cdots a_{j_q}).
\]

By cyclicity of the trace, every tuple of distinct indices can be
cyclically rotated so that its smallest index occurs first. Hence
\begin{equation}\label{eq:cyclic-minimum}
 \Sigma_{\mathrm{inj}}
 =
 q\sum_{i\in I}\tr(a_iD_i),
\end{equation}
where
\[
 D_i
 =
 \sum_{\substack{
 j_1,\ldots,j_{q-1}>i\\
 j_r\neq j_s\ \text{for all }1\leq r<s\leq q-1
 }}
 a_{j_1}\cdots a_{j_{q-1}}.
\]
In this section, we explore a combinatorial method to estimate
$\Sigma_{\mathrm{rep}}$ and
$\left\|(D_i)_{i\in I}\right\|_
 {L_{\frac{q}{q-1}}(\N;\ell_\infty)}$.

To this end, we first introduce some combinatorial notation.
Let $\mathcal P_q$ denote the lattice of all partitions of
\(
    [q]=\{1,\ldots,q\}.
\)
We equip $\mathcal P_q$ with the following order: for
$\sigma,\pi\in\mathcal P_q$, we write
\(
    \sigma\le \pi
\)
if every block of $\sigma$ is contained in some block of $\pi$.
Let $\widehat 0$ and $\widehat 1$ denote respectively the minimal
and maximal elements of $\mathcal P_q$. Thus,
\(
    \widehat 0
    =
    \bigl\{
        \{1\},\ldots,\{q\}
    \bigr\}
\)
is the partition into $q$ singletons, whereas
\(
    \widehat 1
    =
    \bigl\{
        [q]
    \bigr\}
\)
is the partition consisting of a single block. For
$\pi\in\mathcal P_q$, let $r_1(\pi)$ denote the number of singleton
blocks of $\pi$. For a multi-index
$\mathbf i=(i_1,\ldots,i_q)\in I^q$, which
can be understood as a map
\[
    \mathbf{i}\colon [q]\longrightarrow I,
    \qquad
    r\longmapsto i_r,
\]
let $\pi(\mathbf i)$ be the partition of $[q]$ into the nonempty
fibres of $\mathbf i$, namely,
\[
\pi(\mathbf i)
=
\left\{\mathbf{i}^{-1}(\{i\})
    \;\middle|\;
    i\in I,\ \mathbf{i}^{-1}(\{i\})\neq\varnothing
\right\}.
\]
Now define the partition sum
\[
\Phi(\pi)
=
\sum_{\substack{\mathbf i\in I^q\\
                \pi(\mathbf i)=\pi}}
\tr\bigl(a_{i_1}\cdots a_{i_q}\bigr),
\qquad
\Psi(\pi)
=
\sum_{\substack{\mathbf i\in I^q\\
                \pi(\mathbf i)\ge \pi}}
\tr\bigl(a_{i_1}\cdots a_{i_q}\bigr).
\]
We easily see
\[
\Psi(\pi)
=
\sum_{\substack{\sigma\in\mathcal P_q\\
                \sigma\ge\pi}}
\Phi(\sigma).
\]
By M\"obius inversion,
\[
\Phi(\pi)
=
\sum_{\substack{\sigma\in\mathcal P_q\\
                \sigma\ge\pi}}
\mu(\pi,\sigma)\Psi(\sigma).
\]
where $\mu(\pi,\sigma)$ is the M\"obius function of the partition
lattice $\mathcal P_q$.
For the M\"obius inversion formula and the explicit formula on the
partition lattice, see
\cite[Section~3.7 and Example~3.10.4]{Stanley12}  or
\cite{Rota64}.

We first estimate $\Sigma_{\mathrm{rep}}$ and then turn to
$\left\|(D_i)_{i\in I}\right\|_{L_{\frac{q}{q-1}}(\N;\ell_\infty)}$.

\subsection{Estimate of the repeated-index term}

\begin{proposition}\label{prop:rep}
Let $q\geq2$ be an integer and let $0\leq a_i\leq\one$ for
$i\in I$. Then
\[
    \Sigma_{\mathrm{rep}}
 =
 \sum_{\substack{
 j_1,\ldots,j_q\in I\\
 \exists\,1\leq r<s\leq q\ \text{such that }j_r=j_s
 }}
 \tr(a_{j_1}\cdots a_{j_q})
\]
satisfies
\[
    |\Sigma_{\mathrm{rep}}|
    \lesssim_q
    \left\|
        \sum_{i\in I} a_i
    \right\|_{q-1}^{q-1}.
\]
\end{proposition}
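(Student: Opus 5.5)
Write $A=\sum_{i\in I}a_i$. By definition, $\Sigma_{\mathrm{rep}}=\sum_{\pi\neq\widehat 0}\Phi(\pi)$. Using the M\"obius inversion recalled above, each $\Phi(\pi)$ is a finite linear combination, with coefficients depending only on $q$, of the quantities $\Psi(\sigma)$ with $\sigma\ge\pi\neq\widehat0$. Since $\mathcal P_q$ is finite, it therefore suffices to prove
\[
 |\Psi(\sigma)|\lesssim_q \|A\|_{q-1}^{q-1}
 \qquad\text{for every }\sigma\in\mathcal P_q,\ \sigma\neq\widehat 0.
\]
The point of passing from $\Phi$ to $\Psi$ is that $\Psi(\sigma)$ has no distinctness constraint between blocks. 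Fix such a $\sigma$ with blocks $B_1,\dots,B_m$, where $m\le q-1$. Then
\[
 \Psi(\sigma)=\sum_{i_1,\dots,i_m\in I}\tr\bigl(a_{i_{b(1)}}\cdots a_{i_{b(q)}}\bigr),
\]
where $b(r)$ denotes the index of the block containing $r$. This is a multilinear trace expression in which the variable $x_k=a_{i_k}$ occupies the positions of $B_k$, and each index is summed independently.

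To decouple the indices, I introduce independent random signs or, more cleanly, the following averaging device. For each block $k$, let $(\omega_k(i))_{i}$ be independent unimodular random variables (for instance Steinhaus variables), and set $X_k=\sum_i\omega_k(i)a_i$ when $|B_k|=1$. When $|B_k|\ge2$, I instead read the block through the expectation $\E\,\omega_k(i)\overline{\omega_k(i')}\cdots=\delta$-type identities. This seems messy, so I would rather proceed directly by H\"older. The cleanest route is to use $0\le a_i\le\one$, which gives $a_i^{s}\le a_i$ for $s\ge1$. For a block of size $s\ge2$ whose positions are non-adjacent, however, this is not literally available.

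My plan is therefore a complex interpolation argument. Fix $\sigma$, and consider the multilinear form
\[
 T(x^{(1)},\dots,x^{(m)})=\sum_{i}\tr\bigl(\cdots\bigr),
\]
in which block $k$ is filled by the $k$-th sequence $x^{(k)}=(x^{(k)}_i)_i$ at every position of $B_k$. Each block of size $s_k$ then contributes a sequence placed at $s_k$ positions with the same index. I would bound $|T|$ by estimating each block in a mixed space. A singleton block is measured in $L_{q-1}$ of $\sum_i x_i$ in the positive case, that is, in the $L_{q-1}(\ell_1)$ norm. A block of size $s\ge2$ uses the operator-valued diagonal estimate
\[
 \Bigl|\sum_i \tr\bigl(y_0x_iy_1x_i\cdots x_iy_s\bigr)\Bigr|\le \prod\|y_j\|\,\cdot\,\sup_i\|x_i\|_\infty^{s-2}\cdot(\text{row/column square functions}),
\]
proved by Cauchy--Schwarz in $L_2(\ell_2^{c})$ and $L_2(\ell_2^{r})$. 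For positive $a_i\le\one$, the square functions satisfy $\|(\sum a_i^2)^{1/2}\|\le\|A^{1/2}\|$, since $a_i^2\le a_i$. Consequently the total homogeneity in $A$ is at most $m+\#\{\text{non-singleton blocks}\}\le q-1$, because $q=\sum_k s_k\ge m+\#\{s_k\ge2\}$ with equality only when the non-singleton blocks have size exactly $2$. H\"older with exponents summing correctly on a probability space, using $\tr(\one)=1$, then yields $\|A\|_{q-1}^{q-1}$ after padding lower powers via $\|A\|_r\le\|A\|_{q-1}$. This padding is problematic when $\|A\|_{q-1}<1$, since lower powers of a small norm are larger. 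For that case I would use instead $\|A\|_r^r\le\|A\|_{q-1}^{q-1}+\tr(\text{supp})$, or better, keep exactly $q-1$ powers by absorbing the extra $\one$ factors as $a_i\le\one$ only where the count exceeds $q-1$.

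The main obstacle is this last bookkeeping step. One must place the two ``copies'' of each repeated index so that a Cauchy--Schwarz argument produces $\sum_i a_i^{2}\le A$ (or $\sum_i a_i^{1/2}\cdots$) as a genuine positive operator, while the remaining free indices combine into copies of $A$ in H\"older exponents totalling exactly $q-1$. I would first try to handle this by cyclically rotating so that one repeated index sits at the ends, writing the term as $\tr(a_i^{1/2}Y a_i^{1/2}Z)$, and then interpolating between $L_1$ and $L_\infty$ estimates, where each free sum is bounded by $\|A\|_\infty$ in the latter.
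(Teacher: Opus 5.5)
Your M\"obius reduction to $|\Psi(\sigma)|\lesssim_q\|A\|_{q-1}^{q-1}$ for $\sigma\neq\widehat 0$ is correct and is also the paper's first step. The genuine gap is that the proposal never actually bounds $\Psi(\sigma)$, and the scheme you sketch cannot do so.

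The left side has degree $q$ in the $a_i$ and the target has degree exactly $q-1$. So the hypothesis $a_i\le\one$ may be spent to lose one power, in one block only. A bound of lower degree does not imply the target when $\|A\|_{q-1}<1$.

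Your square-function estimate uses $a_i^2\le a_i$ and $\|a_i\|_\infty\le1$ in \emph{every} non-singleton block. Your count is also off: $q=\sum_ks_k\ge m+\#\{s_k\ge2\}$ only gives $m+\#\{s_k\ge2\}\le q$, not $\le q-1$. For example, take $q=4$ and $\sigma=\{\{1,2\},\{3,4\}\}$, so that $\Psi(\sigma)=\tr\bigl((\sum_ia_i^2)^2\bigr)$. Your scheme gives at best $\|A\|_2^2$, not $\|A\|_3^3$. A single index with $a_1=\varepsilon\one$ already shows that no padding can bridge this. In addition, with several repeated blocks, the $y_j$ in your diagonal estimate are not fixed operators, since they carry the coupled indices of the other blocks.

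The missing idea is the decoupling you abandoned, done with the right random variables. Fix one block $B_0=\{1=b_1<\cdots<b_d\}$ with $d\ge2$ (rotate using traciality). For each other block $B_\alpha$, take i.i.d.\ variables $(\xi_i^{(\alpha)})_i$ uniform on the $|B_\alpha|$-th roots of unity, independent across $\alpha$. Steinhaus variables give an identically vanishing average, since $\E\,\omega^s=0$ for all $s\ge1$. Set $X_\alpha=\sum_i\xi_i^{(\alpha)}a_i$, let $\alpha_r$ be the block containing $r$, and let $H_j$ be the ordered product of the $X_{\alpha_r}$ over $b_j<r<b_{j+1}$ (with $b_{d+1}=q+1$ and empty products equal to $\one$). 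Then
\[
 \Psi(\sigma)=\E\,\tr\Bigl(\sum_{k}a_kH_1a_kH_2\cdots a_kH_d\Bigr).
\]
Positivity of $\begin{pmatrix}A&X_\alpha\\ X_\alpha^*&A\end{pmatrix}=\sum_i\begin{pmatrix}1&\xi_i^{(\alpha)}\\ \overline{\xi_i^{(\alpha)}}&1\end{pmatrix}\otimes a_i$ gives $\|X_\alpha\|_{q-1}\le\|A\|_{q-1}$. Hence $\|H_j\|_{(q-1)/n_j}\le\|A\|_{q-1}^{n_j}$, where $n_j=b_{j+1}-b_j-1$ and $\sum_jn_j=q-d$.

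What remains, and is also absent from your proposal, is the one-block inequality (Lemma~\ref{lem: q to q-1})
\[
 \Bigl|\tr\Bigl(\sum_k a_kh_1a_kh_2\cdots a_kh_d\Bigr)\Bigr|\le\|A\|_r^{d-1}\prod_{s=1}^d\|h_s\|_{r_s},
 \qquad \frac{d-1}{r}+\sum_{s=1}^d\frac1{r_s}=1.
\]
Apply it with $r=q-1$, $h_j=H_j$ and $r_j=(q-1)/n_j$. It contributes the remaining $d-1$ powers, for a total of exactly $(d-1)+(q-d)=q-1$.

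Your Cauchy--Schwarz bound cannot give this inequality. It needs each factor sandwiched between two copies of $x_i$ to lie in $L_\infty$, in order to dominate $\sum_ix_i|y|^2x_i$ by $\|y\|_\infty^2\sum_ix_i^2$. The $H_j$, however, lie only in $L_{(q-1)/n_j}$.

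The paper obtains the intermediate exponents as follows. For $d=2$, the positivity $\tr\bigl(a_k\one_{\Delta}(h_1)a_k\one_{\Delta'}(h_2)\bigr)\ge0$ for spectral projections reduces the problem to $h_1,h_2\ge0$. One then applies the three-lines lemma to $z\mapsto\sum_k\tr\bigl(a_kh_1^{r_1(1-z)/r'}a_kh_2^{r_2z/r'}\bigr)$, where $r'$ is the conjugate exponent of $r$. Its boundary values are controlled by $\sum_k\tr(a_k^2h)\le\tr(Ah)$, and this is the only place where $a_k\le\one$ is used. General $h_1,h_2$ follow by polar decomposition and Cauchy--Schwarz, and $d\ge3$ reduces to $d=2$ by averaging over $(d-1)$-th roots of unity.

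Your closing idea (rotate, write $\tr(a_i^{1/2}Ya_i^{1/2}Z)$, interpolate) points in this direction. It can only be carried out after the decoupling above.
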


We shall use the following lemma.
\begin{lemma}\label{lem: q to q-1}
Let $d\ge2$, let $(a_i)_{i\in I}\subset L_\infty^+(\mathcal{N})$
satisfy $0\le a_i\le1$, and let
$h_1,\ldots,h_d\in L_\infty(\mathcal{N})$. Suppose that
\[
    \frac{d-1}{r}+\sum_{s=1}^d\frac1{r_s}=1,
    \qquad
    1\le r,r_1,\ldots,r_d\le\infty.
\]
Then
\[
    \left|
    \tr\left(
        \sum_{i\in I}
        a_i h_1 a_i h_2\cdots a_i h_d
    \right)
    \right|
    \le
    \left\|\sum_{i\in I}a_i\right\|_r^{d-1}
    \prod_{s=1}^d\|h_s\|_{r_s}.
\]
\end{lemma}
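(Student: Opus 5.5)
The plan is to prove the estimate by complex (multilinear) interpolation between extreme configurations of the exponents, after checking the endpoint cases directly. Define the $(d+1)$-linear form
\[
 T\bigl((a_i)_i,h_1,\ldots,h_d\bigr)=\tr\Bigl(\sum_{i\in I}a_ih_1a_ih_2\cdots a_ih_d\Bigr).
\]
The $a_i$ are constrained to $0\le a_i\le 1$, so interpolation in $a$ is not directly linear. I therefore fix the $a_i$ and interpolate only in $(h_1,\ldots,h_d)$. The form is multilinear in the $h_s$, and the $a$-dependence sits in the constant $\|\sum a_i\|_r^{d-1}$.

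The key step is to find, for each fixed $r$, the endpoint exponent tuples. The admissible $(1/r_1,\ldots,1/r_d)$ form the simplex $\sum_s 1/r_s=1-(d-1)/r$ with $1/r_s\in[0,1]$. By multilinear Riesz--Thorin for noncommutative $L_p$ (complex interpolation in each variable), it suffices to treat the vertices. A vertex has one exponent $r_s$ with $1/r_s=1-(d-1)/r$ and all other $r_t=\infty$, or, if $1-(d-1)/r>1$ is impossible, a similar configuration. By cyclicity and relabeling we may take the special slot to be $s=d$.

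The endpoint estimate is then
\[
 \Bigl|\tr\Bigl(\sum_i a_ih_1\cdots a_ih_d\Bigr)\Bigr|\le \Bigl\|\sum_i a_i\Bigr\|_r^{d-1}\|h_d\|_{r_d}\prod_{s<d}\|h_s\|_\infty ,\qquad \frac1{r_d}=1-\frac{d-1}{r}.
\]
This reduces to the claim that $X=\sum_i a_ih_1a_i\cdots h_{d-1}a_i$ satisfies $\|X\|_{r_d'}\le\|\sum a_i\|_r^{d-1}$ when $\|h_s\|_\infty\le 1$, where $r_d'=r/(d-1)$.

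I prove this by a second interpolation, this time in a single parameter. Write $X=\sum_i a_i^{1/2}\bigl(a_i^{1/2}h_1a_i\cdots a_i^{1/2}\bigr)a_i^{1/2}$. Alternatively, view it as a composition of the maps $y\mapsto \sum_i a_i^{1/2}\cdot$ arranged in a column/row (Hilbert $C^*$-module) factorization, with $R=\sum_i e_{1i}\otimes a_i^{1/2}$ and $\|R\|$ controlled by $\|\sum a_i\|$.

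Concretely, $X=R\,\mathrm{diag}(Y_i)\,R^*$ with $Y_i=a_i^{1/2}h_1a_ih_2\cdots h_{d-1}a_i^{1/2}$, and $\|Y_i\|_\infty\le1$ using $a_i\le1$. This gives the $d=2$ case at once: $\|X\|_{r}\le\|RR^*\|_r=\|\sum a_i\|_r$. For larger $d$ I instead insert $d-1$ copies. Set $Z_j=\mathrm{diag}$ of middle factors and use $a_i\le a_i^{\theta}$ to write $X$ as a product of $d-1$ operators of the form $R_\cdot(\cdot)R_\cdot^*$ in the amplified algebra $M_I(\N)$. Each such factor has $L_r$ norm at most $\|\sum a_i\|_r$, and Hölder with $(d-1)/r=1/r_d'$ then finishes.

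I expect this splitting to be the main obstacle: the indices must be kept diagonal, i.e.\ the same $i$ in all slots, while each factor carries $\sum a_i$. If the direct splitting fails, I would fall back on interpolating the analytic family $z\mapsto\sum_i a_i^{z_1}h_1\cdots a_i^{z_d}h_d$ over the strip. At $\mathrm{Re}\,z$ on the boundary, all the weight is on one $a_i$ with the remaining $a_i^{it}$ unitary-like contractions, so the bound $\|\sum_i a_i^{1+it}h\,u_i\|$ follows from the $d=2$ row/column estimate. Stein interpolation then recovers the general Hölder exponents.
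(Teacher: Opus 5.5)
Your reduction is sound. For fixed $(a_i)$, multilinear complex interpolation in $(h_1,\ldots,h_d)$ reduces the lemma to the vertices of the exponent simplex. Cyclicity and duality then turn each vertex into the bound $\|X\|_{r/(d-1)}\le\|A\|_r^{d-1}$, where $X=\sum_ia_ih_1a_i\cdots h_{d-1}a_i$, $A=\sum_ia_i$ and $\|h_s\|_\infty\le1$. For $d=2$ your factorization $e_{11}\otimes X=R\,\mathrm{diag}(Y_i)\,R^*$ proves this, and is a tidy alternative to the paper's spectral-positivity and three-lines argument.

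The gap is the vertex bound for $d\ge3$, which you flag as the main obstacle and do not establish. A product of $d-1$ operators of the form $R(\cdot)R^*$ is a product of corner elements, so it expands into a sum over $(i_1,\ldots,i_{d-1})$ with off-diagonal cross terms. Equivalently, $R^*R=[a_i^{1/2}a_j^{1/2}]_{i,j}$ is not diagonal, so such factors cannot be chained while keeping one index. The diagonal factorization you do have is $e_{11}\otimes X=R\,D_1D_2\cdots D_{d-1}\,R^*$ with $D_s=\mathrm{diag}_i(a_i^{1/2}h_sa_i^{1/2})$. Bounded via $\|D_s\|_\infty\le1$, it only gives $\|X\|_{r/(d-1)}\le\|X\|_r\le\|A\|_r$. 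This is weaker than $\|A\|_r^{d-1}$ precisely when $\|A\|_r<1$, for example a single projection of small trace. Because the $a_i$ are pinned by $a_i\le\one$, this cannot be fixed by rescaling. The Stein-interpolation fallback does not close the gap either. On the boundary, the $i$-dependent partial isometries $a_i^{it}$ sit between the $h_s$, so the diagonal sum does not reduce to the $d=2$ row/column bound, and nothing in the sketch produces the power $d-1$.

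The missing idea is to estimate $d-2$ of the diagonal factors in $L_r(M_I(\N))$ instead of $L_\infty$. With $\Delta=\mathrm{diag}_i(a_i)$ you have $D_s=\Delta^{1/2}(\one\otimes h_s)\Delta^{1/2}$, hence $\|D_s\|_r\le\|\Delta\|_r\|h_s\|_\infty$. Moreover $\|\Delta\|_r\le\|R^*R\|_r=\|RR^*\|_r=\|A\|_r$, since $\Delta$ is the diagonal compression of $R^*R$, which is a contractive conditional expectation on $L_r$. Now apply H\"older with $R,R^*\in L_{2r}$, $D_1\in L_\infty$ and $D_2,\ldots,D_{d-1}\in L_r$. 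This gives $\|X\|_{r/(d-1)}\le\|A\|_r^{1/2}\|A\|_r^{d-2}\|A\|_r^{1/2}=\|A\|_r^{d-1}$, since the corner embedding is isometric. Your interpolation then yields the lemma with constant $1$.

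The paper enforces diagonality probabilistically instead. It proves the $d=2$ estimate with arbitrary unimodular weights $\gamma_k$. It then writes $\sum_kf_kh_1\cdots f_kh_d$ as the expectation of $(\sum_k\gamma_kf_kh_1f_kh_2)\prod_{i\ge3}(\sum_j\gamma_jf_j)h_i$ over independent $\gamma_k$ uniform on the $(d-1)$-th roots of unity, and uses $\|\sum_j\gamma_jf_j\|_r\le\|\sum_jf_j\|_r$.
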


We postpone the proof of Lemma \ref{lem: q to q-1} and first prove
Proposition \ref{prop:rep}.

\begin{proof}[Proof of Proposition \ref{prop:rep}]
By M\"obius inversion,
\begin{equation}
    \Sigma_{\mathrm{rep}}
    =
    -\sum_{\substack{
        \pi\in\mathcal P_q\\
        \pi>\widehat{0}
    }}
    \mu(\widehat{0},\pi)\,\Psi(\pi).
    \label{Mobius for rep}
\end{equation}
Hence, it suffices to estimate \(\lvert\Psi(\pi)\rvert\) for every
\(\pi>\widehat{0}\).

To this end, denote the blocks of $\pi$ by
\[
    \pi=\{B_0,B_1,\ldots,B_m\},
\]
By the tracial property, we may assume that
\[
    B_0=\{1=b_1<b_2<\cdots<b_d\},
    \qquad d\geq 2.
\]
For each \(1\leq\alpha\leq m\), let $\bigl(\xi_i^{(\alpha)}\bigr)_{i\in I}$
be a family of independent and identically distributed random
variables, uniformly distributed over the set of
\(\lvert B_\alpha\rvert\)-th roots of unity. We also assume that the
families corresponding to different values of \(\alpha\) are
independent. By definition, for \(0\leq n\leq \lvert B_\alpha\rvert\),
\[
  \bigl|\xi_i^{(\alpha)}\bigr|=1,\qquad   \mathbb{E}\left[
        \bigl(\xi_i^{(\alpha)}\bigr)^n
    \right]
    =
    \begin{cases}
        1,
        & n=0\ \text{or}\ n=\lvert B_\alpha\rvert,\\[2mm]
        0,
        & 1\leq n<\lvert B_\alpha\rvert.
    \end{cases}
\]
Define
\[
    X_\alpha
    =
    \sum_{i\in I}\xi_i^{(\alpha)}a_i,
    \qquad 1\leq\alpha\leq m.
\]
For every \(k\in[q]\setminus B_0\), there exists a unique
\(1\leq\alpha\leq m\) such that \(k\in B_\alpha\). We denote this
index by \(\alpha_k\).

Set \(b_{d+1}=q+1\). For \(1\leq i\leq d\), define
\[
    H_i
    =
    \begin{cases}
        \displaystyle
        \prod_{r=b_i+1}^{b_{i+1}-1}X_{\alpha_r},
        & b_{i+1}>b_i+1,\\[4mm]
        1,
        & b_{i+1}=b_i+1,
    \end{cases}
\]
Expanding the products and using independence, we obtain
\[
    \mathbb{E}\,
    \tr\left(
        \sum_k
        a_kH_1a_kH_2\cdots a_kH_d
    \right)
    =
    \Psi(\pi),
\]
Indeed, let
\[
 J=[q]\setminus B_0.
\]
For every $r\in J$, introduce an index $j_r\in I$. Expanding the
definition of the operators $H_1,\ldots,H_d$, while preserving the
original order of the positions $1,\ldots,q$, gives
\begin{align*}
 \sum_{k\in I}a_kH_1a_kH_2\cdots a_kH_d
 =
 \sum_{k\in I}
 \sum_{(j_r)_{r\in J}\in I^J}
 \left(
 \prod_{r\in J}
 \xi_{j_r}^{(\alpha_r)}
 \right)
 a_{i_1}a_{i_2}\cdots a_{i_q},
\end{align*}
where the multi-index
$\mathbf i=(i_1,\ldots,i_q)$ is defined by
\[
 i_r
 =
 \begin{cases}
  k,&r\in B_0,\\
  j_r,&r\in J.
 \end{cases}
\]
Thus,
\begin{align}
 &\mathbb E\,
 \tr\left(
 \sum_{k\in I}a_kH_1a_kH_2\cdots a_kH_d
 \right)
 =
 \sum_{k\in I}
 \sum_{(j_r)_{r\in J}\in I^J}
 \mathbb E\left(
 \prod_{r\in J}
 \xi_{j_r}^{(\alpha_r)}
 \right)
 \tr(a_{i_1}\cdots a_{i_q}).
 \label{eq:expectation-expansion}
\end{align}

We now compute the scalar expectation. Since the random families
corresponding to distinct blocks $B_\alpha$ are independent,
\[
 \mathbb E\left(
 \prod_{r\in J}\xi_{j_r}^{(\alpha_r)}
 \right)
 =
 \prod_{\alpha=1}^m
 \mathbb E\left(
 \prod_{r\in B_\alpha}
 \xi_{j_r}^{(\alpha)}
 \right).
\]
For every $t\in I$, define
\[
 n_{\alpha,t}
 =
 \#\{r\in B_\alpha:j_r=t\}.
\]
Then
\[
 \sum_{t\in I}n_{\alpha,t}=|B_\alpha|
\]
and, by independence of the random variables
$(\xi_t^{(\alpha)})_{t\in I}$,
\begin{align*}
 \mathbb E\left(
 \prod_{r\in B_\alpha}\xi_{j_r}^{(\alpha)}
 \right)=
 \prod_{t\in I}
 \mathbb E\left[
 \bigl(\xi_t^{(\alpha)}\bigr)^{n_{\alpha,t}}
 \right].
\end{align*}
It follows that
\[
 \mathbb E\left(
 \prod_{r\in B_\alpha}\xi_{j_r}^{(\alpha)}
 \right)
 =
 \begin{cases}
  1,&j_r=j_{r'}\text{ for all }r,r'\in B_\alpha,\\
  0,&\text{otherwise}.
 \end{cases}
\]
Consequently,
\[
 \mathbb E\left(
 \prod_{r\in J}\xi_{j_r}^{(\alpha_r)}
 \right)
 =
 \one_{\{
 i_r=i_{r'}\text{ whenever }r,r'
 \text{ belong to the same block of }\pi
 \}}.
\]
Therefore, \eqref{eq:expectation-expansion} becomes
\begin{align}\label{explica}
 \mathbb E\,
 \tr\left(
 \sum_{k\in I}a_kH_1a_kH_2\cdots a_kH_d
 \right)
 &=
 \sum_{\substack{\mathbf i\in I^q\\
 i_r=i_{r'}\text{ whenever }r,r'
 \text{ lie in the same block of }\pi}}
 \tr(a_{i_1}\cdots a_{i_q})\\\nonumber
 &\quad=
 \sum_{\substack{\mathbf i\in I^q\\
 \pi(\mathbf i)\geq\pi}}
 \tr(a_{i_1}\cdots a_{i_q})=\Psi(\pi).
\end{align}

On the other hand, put $A=\sum_{i\in I}a_i$. Since, we clearly have that
$\left[\begin{array}{cc} A & X_\alpha \\ X_\alpha ^*& A\end{array}\right]\geq 0$ as it  is a sum of positive elements.
We deduce that $\|X_\alpha\|_{q-1}\leq \|A\|_{q-1}$.

For \(1\le i\le d\), let $n_i:=b_{i+1}-b_i-1$ and define
\[
    p_i
    :=
    \begin{cases}
        \dfrac{q-1}{n_i}, & n_i\ge 1,\\[2mm]
        \infty, & n_i=0.
    \end{cases}
\]
By H\"older's inequality,
\[
\begin{aligned}
    \|H_i\|_{p_i}
    &\le
    \prod_{r=b_i+1}^{b_{i+1}-1}
    \left\|
        X_{\alpha_r}
    \right\|_{q-1}\leq
    \|A\|_{q-1}^{n_i}.
\end{aligned}
\]
When $n_i=0$, this means simply that $H_i=\one$ and
$\|H_i\|_\infty=1$. Moreover,
\[
    \sum_{i=1}^d n_i=q-d,
    \qquad
    \frac{d-1}{q-1}+\sum_{i=1}^d\frac{1}{p_i}=1.
\]
Lemma \ref{lem: q to q-1}, applied with $r=q-1$, therefore gives
\[
\begin{aligned}
    \left|
    \tr\left(
        \sum_k
        a_kH_1a_kH_2\cdots a_kH_d
    \right)
    \right|
    &\le
    \|A\|_{q-1}^{d-1}
    \prod_{i=1}^d\|H_i\|_{p_i}\leq
    \|A\|_{q-1}^{q-1}.
\end{aligned}
\]
Taking expectation, we conclude that
\[
\begin{aligned}
    |\Psi(\pi)|
    \le
    \mathbb{E}
    \left|
    \tr\left(
        \sum_k
        a_kH_1a_kH_2\cdots a_kH_d
    \right)
    \right|\lesssim_q
    \|A\|_{q-1}^{q-1}.
\end{aligned}
\]
Combining this estimate with \eqref{Mobius for rep} and using that
$\mathcal P_q$ is finite, we obtain
\[
    |\Sigma_{\mathrm{rep}}|
    \lesssim_q
    \left\|
        \sum_{i\in I} a_i
    \right\|_{q-1}^{q-1}.
\]
\end{proof}

We now prove Lemma \ref{lem: q to q-1}, starting with the case
$d=2$.
\begin{proof}[Proof of Lemma \ref{lem: q to q-1}]
Write $f_k=a_k$. We divide the argument into three steps. In the
first two steps, $d=2$. For arbitrary scalars
$\gamma_k\in\mathbb T$, we prove the slightly stronger estimate
\begin{equation}\label{holder 2}
 \left|\tr\left(\sum_k\gamma_kf_kh_1f_kh_2\right)\right|
 \leq
 \left\|\sum_kf_k\right\|_r
 \|h_1\|_{r_1}\|h_2\|_{r_2}.
\end{equation}

\emph{Step 1: the self-adjoint case.}
Assume first that $h_1$ and $h_2$ are self-adjoint. Let
$\mathcal M_i$ be the von Neumann sub-algebra generated by $h_i$, $i=1,2$, and define
\[
 \Psi_\gamma(\alpha_1,\alpha_2)
 =
 \tr\left(\sum_k\gamma_kf_k\alpha_1f_k\alpha_2\right),
 \qquad \alpha_i\in\mathcal M_i.
\]
 By a step function of $h_i$, we mean an operator of
the form
\begin{align}\label{step function}
 \beta_i
 =
 \sum_j c_j^{(i)}
 \mathbf1_{\Delta_j^{(i)}}(h_i),
\end{align}
where the sets $\Delta_j^{(i)}\subset\mathbb R$ are pairwise
disjoint Borel sets and $c_j^{(i)}\in\mathbb C$.
If $\alpha_i$ is a step function of $h_i$ of form \eqref{step function}, then
\begin{align*}
 \Psi_\gamma(\alpha_1,\alpha_2)
 =
 \sum_{k,j,\ell}
 \gamma_kc_j^{(1)}c_\ell^{(2)}
 \tr\left(
 f_k\mathbf1_{\Delta_j^{(1)}}(h_1)
 f_k\mathbf1_{\Delta_\ell^{(2)}}(h_2)
 \right).
\end{align*}
Moreover,
\begin{align*}
\tr\left(
 f_k\mathbf1_{\Delta_j^{(1)}}(h_1)
 f_k\mathbf1_{\Delta_\ell^{(2)}}(h_2)
 \right)
\geq0.
\end{align*}
Consequently,
\[
 |\Psi_\gamma(\alpha_1,\alpha_2)|
 \leq\Psi_1(|\alpha_1|,|\alpha_2|).
\]
By spectral approximation, it is therefore enough to consider
$\gamma_k=1$ and $h_1,h_2\geq0$.

 Choose
$\theta\in[0,1]$ such that
\[
 \frac1{r_1}=\frac{1-\theta}{r'},
 \qquad
 \frac1{r_2}=\frac{\theta}{r'}.
\]
After replacing $h_i$ by $h_i+\varepsilon\one$ and then letting
$\varepsilon\downarrow0$, we may use complex powers and consider
\[
 F(z)
 =
 \tr\left(
 \sum_k f_kh_1^{\frac{r_1(1-z)}{r'}}
 f_kh_2^{\frac{r_2z}{r'}}
 \right),
 \qquad 0\leq\Re z\leq1.
\]
The function $F$ is holomorphic in the open strip and continuous on
its boundary. By the three-lines lemma,
\[
 |F(\theta)|
 \leq
 \sup_{t\in\mathbb R}|F(it)|^{1-\theta}
 \sup_{t\in\mathbb R}|F(1+it)|^\theta.
\]
Using the preceding domination and H\"older's inequality, we obtain
\begin{align*}
 |F(it)|
 \leq
 \Psi_1\left(h_1^{r_1/r'},\one\right)
 =\sum_k\tr\left(f_k^2h_1^{r_1/r'}\right)
\leq
 \left\|\sum_kf_k\right\|_r
 \|h_1\|_{r_1}^{r_1/r'},
\end{align*}
where we used $f_k^2\leq f_k$. Similarly,
\[
 |F(1+it)|
 \leq
 \left\|\sum_kf_k\right\|_r
 \|h_2\|_{r_2}^{r_2/r'}.
\]
Since $r_1(1-\theta)/r'=r_2\theta/r'=1$, this proves
\eqref{holder 2} for self-adjoint $h_1,h_2$.

\emph{Step 2: the general case.}
Write
$h_1=|h_1^*|u_1^*$ and $h_2=u_2|h_2|$. The
Cauchy--Schwarz inequality gives
\begin{align*}
 &\left|\tr\left(\sum_k\gamma_kf_kh_1f_kh_2\right)\right|\\
 &\quad\leq
 \left[
 \tr\left(\sum_kf_k|h_1^*|f_k|h_2|\right)
 \right]^{1/2}
 \left[
 \tr\left(
 \sum_kf_ku_2|h_2|u_2^*f_ku_1|h_1^*|u_1^*
 \right)
 \right]^{1/2}.
\end{align*}
Applying Step 1 to both factors proves \eqref{holder 2} in general.

\emph{Step 3: reduction of the general case to $d=2$.}
It remains to consider $d\geq3$.
Let $(\gamma_k)_k$ be independent random variables uniformly
distributed over the {$(d-1)$}-th roots of unity. Then
\[
 \mathbb E(\gamma_k^\ell)=0
 \quad(1\leq\ell\leq d-2),
 \qquad
 \mathbb E(\gamma_k^{d-1})=1.
\]
Independence gives
\begin{align*}
 \sum_k f_kh_1f_kh_2\cdots f_kh_d=
 \mathbb E\left[
 \left(\sum_k\gamma_kf_kh_1f_kh_2\right)
 \prod_{i=3}^d\left(\sum_j\gamma_jf_j\right)h_i
 \right].
\end{align*}
For every choice of the scalars $\gamma_j$, the positive
factorization used above gives
\[
 \left\|\sum_j\gamma_jf_j\right\|_r
 \leq\left\|\sum_jf_j\right\|_r.
\]
Define $s$ by
\[
 \frac1s
 =
 \frac1{r_2}
 +\frac{d-2}{r}
 +\sum_{i=3}^d\frac1{r_i}.
\]
The hypothesis of the lemma gives
\[
 \frac1r+\frac1{r_1}+\frac1s=1.
\]
Thus we may apply \eqref{holder 2} with
\[
 h_2\left(\sum_j\gamma_jf_j\right)h_3
 \cdots
 \left(\sum_j\gamma_jf_j\right)h_d
\]
as its second operator. H\"older's inequality then yields
\[
 \left|
 \tr\left(\sum_k f_kh_1f_kh_2\cdots f_kh_d\right)
 \right|
 \leq
 \left\|\sum_kf_k\right\|_r^{d-1}
 \prod_{i=1}^d\|h_i\|_{r_i},
\]
which is the desired conclusion.
\end{proof}

\subsection{A maximal estimate for
\texorpdfstring{$(D_i)_{i\in I}$}{the family (Di)}}\label{subsec:maximal}

We now estimate the family $(D_i)_{i\in I}$ arising from the
decomposition of the distinct-index term in
\eqref{eq:cyclic-minimum}. Our goal is to establish a uniform
$L_{\frac{q}{q-1}}(\N;\ell_\infty)$ estimate for this family.

\begin{proposition}\label{prop:inj term}
Let $q>2$ be an integer.
 Then
\begin{equation*}
 \left\|(D_i)_{i\in I}\right\|_
 {L_{\frac{q}{q-1}}(\N;\ell_\infty)}
 \lesssim_q
 \left\|\sum_{i\in I}a_i\right\|_q^{q-1}.
\end{equation*}
\end{proposition}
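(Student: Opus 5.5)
The plan is to follow the structure of the proof of Proposition~\ref{prop:rep}, with the tail restriction $j>i$ now carried along. Order $I$ as $\{1,\ldots,n\}$; by the result of \cite{JX07} recalled in Section~\ref{pre}, we may assume $I$ is finite. For $\pi\in\mathcal P_{q-1}$, define tail partition sums
\[
 \Phi_i(\pi)=\sum_{\substack{\mathbf j\in\{j>i\}^{q-1}\\ \pi(\mathbf j)=\pi}}a_{j_1}\cdots a_{j_{q-1}},\qquad
 \Psi_i(\pi)=\sum_{\substack{\mathbf j\in\{j>i\}^{q-1}\\ \pi(\mathbf j)\geq\pi}}a_{j_1}\cdots a_{j_{q-1}} .
\]
Then $D_i=\Phi_i(\widehat0)$, and M\"obius inversion on $\mathcal P_{q-1}$ gives
\[
 D_i=\sum_{\pi\in\mathcal P_{q-1}}\mu(\widehat0,\pi)\,\Psi_i(\pi),
\]
with coefficients independent of $i$. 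Since $\mathcal P_{q-1}$ is finite and $L_{q/(q-1)}(\N;\ell_\infty)$ is a normed space, it suffices to prove, for each fixed $\pi$,
\[
 \big\|(\Psi_i(\pi))_i\big\|_{L_{\frac q{q-1}}(\N;\ell_\infty)}\lesssim_q\Big\|\sum_i a_i\Big\|_q^{q-1}.
\]

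\emph{Linearization.} As in the proof of Proposition~\ref{prop:rep}, attach to each block $B_\alpha$ of $\pi$ an independent family $(\xi^{(\alpha)}_j)_j$ of $|B_\alpha|$-th roots of unity, and set $X^{(\alpha)}_i=\sum_{j>i}\xi^{(\alpha)}_ja_j$. The computation \eqref{explica} then gives
\[
 \Psi_i(\pi)=\mathbb E\big[X^{(\alpha_1)}_i\cdots X^{(\alpha_{q-1})}_i\big],
\]
where $\alpha_r$ is the block containing position $r$. The $2\times2$ positivity argument used above gives, for every realisation and every $i$,
\[
 \|X^{(\alpha)}_i\|_q\leq\Big\|\sum_j a_j\Big\|_q .
\]
The sup over $i$ does not commute with $\mathbb E$. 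I would therefore work in $\N\overline\otimes L_\infty(\Omega)$, where the $L_{q/(q-1)}(\cdot\,;\ell_\infty)$ norm dominates after applying the (positive, contractive on every $L_p(\ell_\infty)$) conditional expectation onto $\N$. So it suffices to bound
\[
 \big\|(X^{(1)}_i\cdots X^{(q-1)}_i)_i\big\|_{L_{q/(q-1)}(\N\overline\otimes L_\infty(\Omega);\ell_\infty)},
\]
a maximal function of products of $q-1$ tail sums, by $\prod\|\cdot\|_q$.

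\emph{Tail sums as martingales and multilinear Doob.} Each family $i\mapsto X^{(\alpha)}_i$ is a reversed martingale-type object. Concretely, tensor with $\ell_\infty^n$ (or $L_\infty[0,1]$) with a suitable filtration $(\mathcal F_i)$, decreasing in $i$, chosen so that $X^{(\alpha)}_i=\mathbb E_{\mathcal F_i}(Y^{(\alpha)})$ up to harmless rescaling, with $\|Y^{(\alpha)}\|_q\lesssim\|X^{(\alpha)}_0\|_q$. The key step is then a multilinear noncommutative Doob inequality:
\[
 \big\|(\mathbb E_i Y_1\,\mathbb E_i Y_2\cdots\mathbb E_i Y_{q-1})_i\big\|_{L_{q/(q-1)}(\ell_\infty)}\lesssim_q\prod_{s}\|Y_s\|_q .
\]
I would prove this by induction on the number of factors. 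Write the product telescopically using martingale differences $dY_s$, i.e.
\[
 \prod_s\mathbb E_iY_s=\sum_{k\leq i}\big(\text{product at step }k\big)-\big(\text{product at step }k-1\big).
\]
Each increment has one factor a difference $d_kY_s$ and the others at time $k$ or $k-1$. The resulting sum is a martingale transform of $\mathbb E(Y_1\cdots Y_{q-1})$ plus lower-order terms, which are handled by Junge's Doob inequality \cite{Junge+Crelle2002}, Burkholder--Gundy type square function bounds, and H\"older. The base case $q-1=1$ is Junge's Doob inequality at exponent $q$, which lands in $L_q(\ell_\infty)$, stronger than needed.

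\emph{Main obstacle.} The multilinear Doob step is where I expect the difficulty: a naive factorization $\|\sup^+\prod\|\leq\prod\|\sup^+\|$ fails for operators (cf.\ Remark~\ref{extreme case}). The first thing I would try is the case of two factors, where
\[
 \mathbb E_iY_1\,\mathbb E_iY_2=\tfrac14\sum_{\varepsilon}\varepsilon\,\big(\mathbb E_i(Y_1+\varepsilon Y_2^*)\big)^*\big(\mathbb E_i(Y_1+\varepsilon Y_2^*)\big)
\]
(polarization). This reduces the problem to families of the form $(\mathbb E_iZ)^*(\mathbb E_iZ)$, and $\big((\mathbb E_iZ)^*\mathbb E_iZ\big)_i$ is controlled in $L_{q/2}(\ell_\infty)$ by $\|Z\|_q^2$ via the column form of Junge's Doob inequality. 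Then I would iterate, combining factors pairwise with H\"older for $L_p(\ell_\infty)$ spaces.
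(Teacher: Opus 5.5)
Your M\"obius inversion and random roots-of-unity linearization are exactly the paper's first steps, and averaging out $\Omega$ is harmless. The two steps that carry the real content, however, both have gaps.

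\textbf{The tail sums are not single martingales.} You cannot write $X_i=\sum_{j>i}\xi_ja_j$ as a single martingale in the way you describe. The bound $\|Y^{(\alpha)}\|_q\lesssim\|X^{(\alpha)}_0\|_q$ is false. Take $a_1=a_2=\one$, $\xi_1=1$, $\xi_2=-1$: then $X_0=0$ but $X_1=-\one$, so $Y^{(\alpha)}$ would have to vanish while one of its conditional expectations reproduces $X_1\neq0$.

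Replacing the right-hand side by $\|\sum_ja_j\|_q$ does not help. No family $c_iX_i\otimes\one$ is a martingale for a decreasing filtration, since that forces $\|c_iX_i\|_q\le\|c_0X_0\|_q$. Nor is it one for an increasing filtration, since that forces $c_iX_i=0$ (because $X_n=0$). The same example violates both. Letting the copy of $X_i$ live only on a small piece of the extension costs a factor in Doob's inequality that you do not control, and you give no construction.

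The missing idea, and the place where positivity of the $a_j$ is really used, is that a tail sum is a product of \emph{two} martingales. Work in $M_{N+1}(\N)$ with the filtration $M_{n+1}(\N)\oplus\N^{N-n}$. The row $x=\sum_ja_{N-j+1}^{1/2}\otimes e_{1,j+1}$ and the column $y=\sum_j\xi_{N-j+1}a_{N-j+1}^{1/2}\otimes e_{j+1,1}$ satisfy $\mathbb E_n(x)\mathbb E_n(y)=\sum_{j>N-n}\xi_ja_j\otimes e_{1,1}$ and $\|x\|_{2q}\|y\|_{2q}\le\|\sum_ja_j\|_q$. Your product of $q-1$ tail sums thus becomes a product of $2(q-1)$ martingales, each bounded in $L_{2q}$.

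\textbf{The multilinear Doob argument does not close.} In the telescoping expansion, the increments with several differences $d_kY_s$ at the same time are neither martingale differences nor of lower order. For three or more factors, their truncated sums pose a maximal problem of the same type as the original one.

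More decisively, the polarization step for two factors is fine: it uses Kadison's inequality $(\mathbb E_iZ)^*\mathbb E_iZ\le\mathbb E_i(Z^*Z)$ and Doob at exponent $q/2>1$. But iterating it ``with H\"older for $L_p(\ell_\infty)$'' uses $\|(x_iy_i)_i\|_{L_p(\ell_\infty)}\lesssim\|(x_i)_i\|_{L_{2p}(\ell_\infty)}\|(y_i)_i\|_{L_{2p}(\ell_\infty)}$. Remark~\ref{extreme case} shows this is false (take $x_i=y_i$); it is precisely the naive factorization you yourself warned against. Only the one-sided version holds: a family bounded in $L_{2p}(\ell_\infty^r)$ times one bounded in $L_{2p}(\ell_\infty^c)$ lies in $L_p(\ell_\infty)$.

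The paper therefore propagates \emph{column} maximal bounds, as in Lemma~\ref{lem:multi-doob}. If $(a_n)$ is adapted with $a_n^*a_n\le A$, then $\mathbb E_n(x)^*a_n^*a_n\mathbb E_n(x)=\mathbb E_n(a_nx)^*\mathbb E_n(a_nx)\le\mathbb E_n(x^*Ax)$, and Doob applies to the right-hand side as long as the exponents stay above $2$. The product of the $2(q-1)$ martingales is then split into a left half bounded in $L_{2p}(\ell_\infty^r)$ and a right half bounded in $L_{2p}(\ell_\infty^c)$, with $p=q/(q-1)$.

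The doubling of factors also guarantees that this balanced split falls between two factors. With your $q-1$ factors and $q-1$ odd, you would need to split a middle factor that is not positive, where fractional powers are unavailable.
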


We shall use the following surprizing maximal inequality:
\begin{proposition}
\label{prop:Multi-Doob}
Consider $\ell \geq 2$ and indices with $p>1$
\[
    \frac 1 2< p_1,\ldots,p_\ell \leq\infty, \quad  \frac1p=\sum_{k=1}^\ell\frac1{p_k}.\]
  For ${1\leq i\leq N, 1\leq k\leq \ell}$, consider $a_{i}^{(k)}\in L_{p_k}^+(\mathcal N)$
  and $\xi_{i}^{(k)}\in \mathbb C$ with $|\xi_{i}^{(k)}|\leq 1$. Define
  $X_{i}^{(k)}=\sum_{j>i} \xi_{j}^{(k)}a_{j}^{(k)}$.
Then
\[
    \left\|
        \left(
            X_i^{(1)}X_i^{(2)}\cdots X_i^{(\ell)}
        \right)_i
    \right\|_{L_p(\N;\ell_\infty)}
    \lesssim_{p_1,\ldots,p_\ell}
    \prod_{k=1}^\ell
    \left\|\sum_{i=1}^N a_i^{(k)}\right\|_{p_k}.
\]
\end{proposition}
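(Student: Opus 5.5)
The plan is to reduce to positive coefficients, realize each tail sum $X_i^{(k)}$ as a conditional expectation in an auxiliary filtration, and then control the product of martingales by a multilinear Doob inequality proved via Hölder factorizations.

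\emph{Reduction to positive tails.} Writing each $\xi_j^{(k)}$ as a combination of four numbers in $[0,1]$ (real and imaginary parts, positive and negative parts) and expanding the product, we may assume $0\le\xi_j^{(k)}\le1$. Absorbing $\xi_j^{(k)}$ into $a_j^{(k)}$ only decreases $\|\sum_j a_j^{(k)}\|_{p_k}$, because the sum is dominated by the original positive sum. So we may assume $\xi\equiv1$. Then $X_i^{(k)}=\sum_{j>i}a_j^{(k)}$ is a decreasing family of positive operators, all dominated by $S^{(k)}:=\sum_j a_j^{(k)}$.

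\emph{Martingale realization.} I would enlarge the algebra to $\widetilde{\N}=\N\overline\otimes\ell_\infty^N$, with normalized counting measure on $[N]$ (or use $L_\infty[0,1]$). On $\widetilde{\N}$ consider the filtration $\mathcal F_i$ generated by $\N$ and the atoms $\{1\},\dots,\{i\}$, with the remaining set $\{i+1,\dots,N\}$ kept as one atom. Set $F^{(k)}:=\sum_j N a_j^{(k)}\otimes\delta_j$. Then $\mathbb E_i F^{(k)}$, restricted to the tail atom, equals a multiple of $X_i^{(k)}$. Up to harmless normalisation factors (weighting the atoms to make the constants exact), the product $X_i^{(1)}\cdots X_i^{(\ell)}$ becomes the compression of $\prod_k\mathbb E_i F^{(k)}$ to that atom. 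Moreover $\|F^{(k)}\|_{L_{p_k}}$ is comparable to $\|S^{(k)}\|_{p_k}$ when the atoms are weighted appropriately. Choosing the measure well here is a technical point. So it suffices to prove a multilinear Doob inequality: for positive $F^{(k)}$,
\[
\Big\|\Big(\prod_{k}\mathbb E_iF^{(k)}\Big)_i\Big\|_{L_p(\ell_\infty)}\lesssim\prod_k\|F^{(k)}\|_{p_k}.
\]

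\emph{Multilinear Doob inequality.} For $\ell=1$ this is Junge's noncommutative Doob inequality, valid for $p_1>1$. For the product I would use the factorization description of $L_p(\N;\ell_\infty)$ together with the Hölder property $L_{p_1}(\ell_\infty)\cdot L_{p_2}(\ell_\infty)\subset L_p(\ell_\infty)$. That property is false in general for products, which is presumably the content of the remark the paper flags. The monotone structure helps here. Write $\mathbb E_iF=\mathbb E_i(F^{1/2}F^{1/2})$ and use the asymmetric Doob inequality/column–row factorization: $\mathbb E_i(F)=\mathbb E_i(F^{1/2}\cdot F^{1/2})$ gives $\mathbb E_i F=\alpha u_i\beta$ with $\alpha,\beta\in L_{2p_k}$ (Junge's factorization $\mathbb E_i(x)=a\,y_i\,b$). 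The obstacle is that the middle factors do not commute past neighbouring $\alpha$'s. To handle it I would instead prove the bilinear case by an operator-valued Cauchy--Schwarz. In the column space, $\mathbb E_i(x)^*\mathbb E_i(x)\le\mathbb E_i(x^*x)$, so the product satisfies
\[
|\mathbb E_iF\,\mathbb E_iG|^2\le \mathbb E_iG\,\mathbb E_i(F^2)\,\mathbb E_iG,
\]
and a similar bound can be iterated. This yields sandwich forms $\mathbb E_iG\,M_i\,\mathbb E_iG$ with $M_i\le$ a Doob majorant $m$ of $F^2$ in $L_{p_1/2}$ when $p_1/2>1$. The remaining range $p_1>1/2$ is what the hypothesis suggests must be handled through duality and interpolation of the multilinear map. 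The sandwich is then controlled by writing $m^{1/2}\mathbb E_iG$ and applying Doob to $G$ in a column $L_{p}(\ell_\infty^c)$ sense.

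\emph{Main obstacle.} I expect the hardest step to be closing the argument for small exponents ($1/2<p_k\le1$), together with the induction on $\ell$. For this I would try multilinear complex interpolation in the $p_k$: the endpoints are $p_k=\infty$, which is trivial by contraction, and the Cauchy--Schwarz-derived cases. I would extend by treating the products as analytic families $F_k^{z}$, using the stated interpolation lemma for $L_p(\ell_\infty)$.
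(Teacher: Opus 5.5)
There is a genuine gap, and it lies in your martingale realization rather than in the Doob part. Commutative atoms can only average. To make $\mathbb E_iF^{(k)}$ proportional to $X_i^{(k)}$ on the tail atom you take $F^{(k)}(j)=a^{(k)}_j/\mu_j$ (your choice is $\mu_j=1/N$), and then
\[
 \|F^{(k)}\|_{p_k}^{p_k}=\sum_{j=1}^N\mu_j^{1-p_k}\,\bigl\|a^{(k)}_j\bigr\|_{p_k}^{p_k}.
\]
This is a weighted commutative $\ell_{p_k}$-sum of the individual norms. It never sees $\|\sum_ja^{(k)}_j\|_{p_k}$, and no choice of weights fixes that. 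Take $a_1,\dots,a_N$ pairwise orthogonal projections of trace $t$. Then $\|\sum_ja_j\|_{p_k}=(Nt)^{1/p_k}$. On the other hand, convexity of $x\mapsto x^{1-p_k}$ gives $\sum_j\mu_j^{1-p_k}\ge N^{p_k}$ for $p_k>1$, so $\|F^{(k)}\|_{p_k}\ge N t^{1/p_k}$. You lose a factor $N^{1-1/p_k}$ in every factor, so ``choosing the measure well'' cannot be done. Moreover, one martingale per factor needs $p_k>1$ for Doob, so this route could never reach $1/2<p_k\le1$, which the statement allows. That hypothesis is the hint.

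The missing idea, which is how the paper argues, is to write each tail as a product of \emph{two} martingales: a row and a column of square roots. In $\A=M_{N+1}(\N)$ with the filtration $\A_n=M_{n+1}(\N)\oplus\N^{N-n}$, set
\[
 x^{(k)}=\sum_{j=1}^N\bigl(a^{(k)}_{N-j+1}\bigr)^{1/2}\otimes e_{1,j+1},
 \qquad
 y^{(k)}=\sum_{j=1}^N\xi^{(k)}_{N-j+1}\bigl(a^{(k)}_{N-j+1}\bigr)^{1/2}\otimes e_{j+1,1}.
\]
Then $\mathbb E_nx^{(k)}\,\mathbb E_ny^{(k)}=X^{(k)}_{N-n}\otimes e_{1,1}$. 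Also $\|x^{(k)}\|_{2p_k}\|y^{(k)}\|_{2p_k}\le\|\sum_ja^{(k)}_j\|_{p_k}$, because $x^{(k)}x^{(k)*}=(\sum_ja^{(k)}_j)\otimes e_{1,1}$ and $y^{(k)*}y^{(k)}\le(\sum_ja^{(k)}_j)\otimes e_{1,1}$. The statement thus becomes a Doob inequality for a product of $2\ell$ martingales with exponents $2p_k>1$. No reduction of the $\xi$'s and no interpolation in the exponents are needed.

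For that martingale inequality, your Kadison--Schwarz step is applied in the wrong order. Once the non-adapted majorant $m$ sits between two copies of $\mathbb E_iG$, Doob cannot be invoked. Use adaptedness first: $\mathbb E_iF\,\mathbb E_iG=\mathbb E_i\bigl((\mathbb E_iF)G\bigr)$, so $|\mathbb E_iF\,\mathbb E_iG|^2\le\mathbb E_i\bigl(G^*|\mathbb E_iF|^2G\bigr)\le\mathbb E_i(G^*mG)$, and then apply Doob in $L_{q/2}$. Iterating gives column bounds in $L_q(\ell_\infty^c)$, and row bounds by taking adjoints, but only for $q>2$.

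To reach $L_p(\ell_\infty)$ with $p>1$, you must still split the product at the exponent $1/(2p)$ as $b_nc_n$. The one factor straddling the split is made positive and cut into fractional powers $(\mathbb E_nx)^\theta$ and $(\mathbb E_nx)^{1-\theta}$. These are controlled by operator convexity or operator monotonicity. One then concludes with $\|(b_nc_n)_n\|_{L_p(\ell_\infty)}\le\|(b_n)_n\|_{L_{2p}(\ell_\infty^r)}\|(c_n)_n\|_{L_{2p}(\ell_\infty^c)}$. The multilinear interpolation you sketch does not replace this splitting. In particular, its endpoint $p_k=\infty$ is not trivial, since $L_p(\ell_\infty)$ is not a module over bounded sequences.
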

We postpone the proof of Proposition \ref{prop:Multi-Doob} to the next section as it is of independent interest.

Next, recall that $\Pcal_{q-1}$ denotes the partition lattice of
$[q-1]$. For $\pi\in\Pcal_{q-1}$, define
\[
    X_i^\pi
    =
    \sum_{\substack{j_1,\ldots,j_{q-1}>i\\
                    \pi(\mathbf j)\geq\pi}}
    a_{j_1}\cdots a_{j_{q-1}},
\]
where $\mathbf j=(j_1,\ldots,j_{q-1})\in I^{q-1}$ ($I=[1,N]$). The M\"obius
inversion gives
\begin{equation*}
    D_i
    =
    \sum_{\pi\in\Pcal_{q-1}}
    \mu(\widehat 0,\pi)X_i^\pi.
\end{equation*}
Consequently,
\begin{align*}
    \left\|(D_i)_{i\in I}\right\|_
    {L_{\frac{q}{q-1}}(\N;\ell_\infty)}
    &\leq
    \sum_{\pi\in\Pcal_{q-1}}
    |\mu(\widehat 0,\pi)|
    \left\|(X_i^\pi)_{i\in I}\right\|_
    {L_{\frac{q}{q-1}}(\N;\ell_\infty)}.
\end{align*}
Since $\Pcal_{q-1}$ is finite and depends only on $q$, Proposition
\ref{prop:inj term} follows from the following lemma.

\begin{lemma}
  Let $q>2$, let $I=[1,N]$  ($N$ arbitrary).
Then, for every $\pi\in\mathcal P_{q-1}$,
\[
    \left\|
        (X_i^\pi)_{i\in I}
    \right\|_
    {L_{\frac{q}{q-1}}(\N;\ell_\infty)}
    \lesssim_{q}
    \left\|
        \sum_{j=1}^Na_j
    \right\|_q^{q-1}
\]
where the implicit constant depends only on $q$ and is independent of $\pi$, $I$,
and the family $(a_j)_{j\in I}$.
\end{lemma}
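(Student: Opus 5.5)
We write $X_i^\pi$ as an average of products of martingale-type tails and then apply Proposition~\ref{prop:Multi-Doob}. We use the same device as in the proof of Proposition~\ref{prop:rep}. Let $\pi=\{B_1,\ldots,B_m\}$ be a partition of $[q-1]$. For each block $B_\alpha$ we take an i.i.d.\ family $(\xi_j^{(\alpha)})_{j\in I}$ uniformly distributed on the $|B_\alpha|$-th roots of unity, with the families for different blocks independent. For $r\in[q-1]$ let $\alpha_r$ be the block containing $r$, and put
\[
 Y_i^{(r)}=\sum_{j>i}\xi_j^{(\alpha_r)}a_j .
\]
Expanding and computing the scalar moments exactly as in \eqref{explica} gives
\[
 X_i^\pi=\mathbb E\bigl[Y_i^{(1)}Y_i^{(2)}\cdots Y_i^{(q-1)}\bigr].
\]
Only tuples that are constant on every block of $\pi$ survive. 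The constraint $j_r>i$ is compatible with this, because it is imposed on every coordinate separately.

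Fix a realisation of the $\xi$'s. Then $Y_i^{(r)}$ has exactly the form $X_i^{(k)}$ in Proposition~\ref{prop:Multi-Doob}, with $\ell=q-1\ge2$, $a_j^{(k)}=a_j$, and coefficients of modulus $\le1$. Choose $p_k=q$ for every $k$, so that $1/p=(q-1)/q$, i.e.\ $p=\frac{q}{q-1}>1$, and $p_k=q>\frac12$. Proposition~\ref{prop:Multi-Doob} then yields, uniformly in the realisation,
\[
 \bigl\|(Y_i^{(1)}\cdots Y_i^{(q-1)})_i\bigr\|_{L_{\frac q{q-1}}(\N;\ell_\infty)}\lesssim_q\Bigl\|\sum_j a_j\Bigr\|_q^{q-1}.
\]

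It remains to pass the expectation through the maximal norm. The probability space is finite, since the $\xi$'s take finitely many values and $I$ is finite. So $X_i^\pi$ is a finite convex combination, with weights independent of $i$, of the families above. The norm of $L_{\frac q{q-1}}(\N;\ell_\infty)$ satisfies the triangle inequality, so the bound transfers to $(X_i^\pi)_i$. The constant depends only on $q$, because the exponents in Proposition~\ref{prop:Multi-Doob} are fixed by $q$. It is independent of $\pi$, $N$ and $(a_j)$.

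The main obstacle is therefore entirely contained in Proposition~\ref{prop:Multi-Doob}, whose proof is deferred. At this level, the only point to check carefully is that the randomisation commutes with the tail restriction $j>i$. It does, because the $\xi$'s are indexed by $j$ and not by $i$.
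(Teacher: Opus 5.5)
Your proof is correct and follows the paper's argument: the paper also randomizes each block of $\pi$ with independent roots of unity and identifies $X_i^\pi$ as the expectation of the product of the tails $\sum_{j>i}\xi_j^{(\pi(k))}a_j$ via \eqref{explica}. It then concludes from Proposition~\ref{prop:Multi-Doob} (implicitly with $\ell=q-1$ and all $p_k=q$, which you make explicit) together with the triangle inequality in $L_{\frac{q}{q-1}}(\N;\ell_\infty)$.
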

\begin{proof}
As before, denote the blocks of $\pi$ by
\[
    \pi=\{B_1,\ldots,B_m\},
\]
Let $\bigl(\xi_i^{(\alpha)}\bigr)_{i\in I}$
be independent families  of independent and identically distributed random
variables, uniformly distributed over the set of
\(\lvert B_\alpha\rvert\)-th roots of unity ($1\leq \alpha\leq m$). Define
\[
    X^{(k)}_i
    =
    \sum_{j>i}\xi_j^{(\pi(k))}a_j,
    \qquad 1\leq k \leq q-1,
  \]
  where $\pi(k)$ is the block so that $k\in B_{\pi(k)}$.
  We already explained in \eqref{explica} that
$$\mathbb E \Big(X_i^{(1)}... X_i^{(q-1)}\Big)=
 \sum_{\substack{\mathbf j\in [i+1,N]^{q-1}\\
 \pi(\mathbf j)\geq\pi}}
 a_{j_1}\cdots a_{j_{q-1}}=X^{\pi}_i.$$
 The conclusion follows directly from Proposition \ref{prop:Multi-Doob} and the fact that $L_p(\mathcal N;\ell_\infty)$ is a normed space.
\end{proof}

\subsection{A multilinear noncommutative Doob maximal inequality}\label{subsec:multilinear}

It remains to prove Proposition \ref{prop:Multi-Doob}.
\begin{lemma}\label{lem:multi-doob}
Let $(\A_n,\E_n)_n$ be an increasing filtration in a normal
semifinite tracial von Neumann algebra $(\A,\varphi)$. Let
\[
    a_n\in L_r(\A_n),
    \qquad
    x^{(j)}\in L_{p_j}(\A),
    \qquad
    x_n^{(j)}=\E_n(x^{(j)}).
\]
Then
\[
    \left\|
        \bigl(a_nx_n^{(1)}\cdots x_n^{(k)}\bigr)_n
    \right\|_{L_q(\A;\ell_\infty^c)}
    {\lesssim_{r,p_1,\dotsm,p_k}}
    \left\|(a_n)_n\right\|_{L_r(\A;\ell_\infty^c)}
    \prod_{j=1}^k\|x^{(j)}\|_{p_j},
\]
where
\[
    \frac1q=\frac1r+\sum_{j=1}^k\frac1{p_j},
    \qquad q>2,
    \qquad 2< r,p_1,\ldots,p_k\leq\infty.
\]
\end{lemma}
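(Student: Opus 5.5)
Recall that the column space $L_q(\A;\ell_\infty^c)$ has norm $\|(y_n)_n\|_{L_q(\A;\ell_\infty^c)}=\inf\{\|b\|_q: y_n=z_nb,\ \sup_n\|z_n\|_\infty\le1\}$. Equivalently, it is $\|\sup^+_n |y_n|^2\|_{q/2}^{1/2}$, i.e.\ the infimum of $\|B\|_{q/2}^{1/2}$ over $B\geq y_n^*y_n$ for all $n$. I will argue by induction on $k$, peeling off the last factor.

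The base case $k=0$ is trivial. For the induction step, write $y_n=a_nx_n^{(1)}\cdots x_n^{(k-1)}$ and $x_n=x_n^{(k)}$. By the induction hypothesis there are a factorization $y_n=z_nb$ with $\|z_n\|_\infty\le1$ and an element $b$ with $\|b\|_s\lesssim \|(a_n)\|_{L_r(\ell_\infty^c)}\prod_{j<k}\|x^{(j)}\|_{p_j}$, where $1/s=1/q-1/p_k$. Then
\[
 (y_nx_n)^*(y_nx_n)=x_n^*y_n^*y_nx_n\le x_n^*b^*b\,x_n .
\]
The problem therefore reduces to a bound of the form
\[
 \big\|\mathop{\rm sup^+}_n x_n^*Bx_n\big\|_{q/2}\lesssim \|B\|_{s/2}\,\|x\|_{p_k}^2 ,\qquad B=b^*b\geq0 .
\]
The catch is that $B$ is not adapted to the filtration, whereas $x_n=\E_n x$.

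To handle this I would exploit the adaptedness of $a_n$ more carefully. The plan is to carry the induction in a stronger form in which the dominating element at step $n$ is itself adapted, i.e.\ we control $y_n^*y_n\le \E_n(B)$, or more precisely we control $y_n^*y_n$ by a sequence that is dominated by $\E_n(B)$. This is the point where $a_n\in L_r(\A_n)$ enters. Indeed, if $a_n^*a_n\le A$ with $\|A\|_{r/2}$ controlled, then, since $a_n$ is $\A_n$-measurable, $a_n^*a_n=\E_n(a_n^*a_n)\le \E_n(A)$. Inductively, one then gets
\[
 y_n^*y_n\le x_n^{(k-1)*}\cdots \E_n(A)\cdots x_n^{(k-1)} .
\]
Now $x_n^*\E_n(C)x_n$ with everything adapted can be rewritten as $\E_n\big(x_n^*Cx_n\big)$, which is close to $\E_n(x^*Cx)$ up to martingale difference cross terms. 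The idea is to bound it by the noncommutative Doob inequality (Junge's dual Doob inequality / the $L_p(\ell_\infty)$ Doob inequality, valid since $q/2>1$) applied to a single positive element such as $\big(x^{*}\,(\cdot)\,x\big)$. The naive operator-convexity inequality $\E_n(x)^*C\,\E_n(x)\le \E_n(x^*Cx)$ holds only for $C\in\A_n$. For this reason I would instead telescope with Doob's inequality in the form
\[
 \|\mathop{\rm sup^+}_n \E_n(F)\|_{t}\lesssim_t\|F\|_t ,\qquad t>1 .
\]

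Concretely, I would use the column structure. One has $\|(a_nx_n)_n\|_{L_q(\ell_\infty^c)}\le$ the norm of $(a_n)$ in $L_r(\ell^c_\infty)$ times $\|(x_n)_n\|_{L_p(\ell_\infty)}$-type quantities, which is not directly true. So the fallback plan is to apply Kadison–Schwarz for the conditional expectation $\E_n$ in the following form. Since $a_n\in\A_n$, we have $a_nx_n=\E_n(a_nx)$. Writing $w_n=a_n\,\E_n(x^{(1)})\cdots$, everything before the last factor is in $\A_n$, so $w_nx_n=\E_n(w_nx)$, and Kadison–Schwarz gives
\[
 |\E_n(w_nx)|^2\le \E_n(x^*w_n^*w_nx) .
\]
The induction then proceeds as follows. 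First, $w_n^*w_n\le\E_n(B)$, where $B$ is the dominating element provided by the strengthened hypothesis. Next comes the key difficulty, namely controlling $\E_n(x^*\E_n(B)x)$ by the Doob maximal function of $x^*Bx$. The main obstacle is precisely replacing $\E_n(B)$ inside by $B$; this is not monotone. I would try to overcome it using the factorization $B=\sup^+$ and Junge's asymmetric Doob inequality for $L_{q/2}(\ell_\infty)$ together with H\"older, with $q>2$ ensuring $q/2>1$. The restrictions $r,p_j>2$ guarantee that all intermediate exponents exceed $1$, so that Doob's inequality applies at each step.
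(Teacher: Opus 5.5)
Your proposal has the right ingredients but does not close the argument. It ends at a ``main obstacle'' --- replacing $\E_n(B)$ by $B$ inside $\E_n(x^*\,\cdot\,x)$ --- which you leave unresolved, so as written the proof is incomplete. That obstacle is created by your strengthening of the induction hypothesis to an \emph{adapted} dominant $\E_n(B)$, which is not needed. Your suggested workarounds are not estimates: the claim that $\E_n(x_n^*Cx_n)$ is ``close to'' $\E_n(x^*Cx)$ up to martingale-difference cross terms gives no inequality, and the vague appeal to $B=\sup^+$ plus an asymmetric Doob inequality does not specify one either.

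The missing observation is that adaptedness is used only once, in the Kadison--Schwarz step, and that step must be applied \emph{before} the dominant is introduced. Take $w_n=a_nx_n^{(1)}\cdots x_n^{(k-1)}$, which lies in $L_0(\A_n)$, and $x=x^{(k)}$. Your first paragraph already provides a non-adapted $B=b^*b\in L_{s/2}(\A)_+$ with $w_n^*w_n\le B$ for all $n$ and $\|B\|_{s/2}^{1/2}\lesssim\|(a_n)_n\|_{L_r(\A;\ell_\infty^c)}\prod_{j<k}\|x^{(j)}\|_{p_j}$. Then
\[
 |w_nx_n|^2=|\E_n(w_nx)|^2\le\E_n(x^*w_n^*w_nx)\le\E_n(x^*Bx).
\]
The last inequality is just positivity of the map $C\mapsto\E_n(x^*Cx)$. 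There is no monotonicity problem, because $B$ now sits inside $\E_n$ next to the full $x$, not next to $x_n$. Doob's inequality in $L_{q/2}(\A;\ell_\infty)$ (valid since $q/2>1$), applied to the single positive element $x^*Bx$, followed by H\"older with $2/q=2/s+2/p_k$, gives $\|\mathop{\rm sup^+}_n|w_nx_n|^2\|_{q/2}\lesssim\|B\|_{s/2}\|x\|_{p_k}^2$. This is the induction step.

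This is exactly the paper's proof. The paper does the case $k=1$ this way and then iterates, using that $a_nx_n^{(1)}\cdots x_n^{(j-1)}$ is again adapted and that every intermediate exponent satisfies $q_j\ge q>2$.
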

\begin{proof}
For $k=1$,
write $x=x^{(1)}$ and $p=p_1$, and fix $\varepsilon>0$. By the order characterization of
$L_{r/2}(\A;\ell_\infty)$, there exists
$A_{\varepsilon}\in L_{r/2}(\A)_+$ such that
\[
    a_n^*a_n\leq A_{\varepsilon}
    \qquad\text{for every }n,
\]
and
\[
    \|A_{\varepsilon}\|_{r/2}^{1/2}
    \leq
    \left\|(a_n)_n\right\|_{L_r(\A;\ell_\infty^c)}
    +\varepsilon.
\]
Since $a_n\in\A_n$, the bimodule property and Kadison's inequality give
\[
    x_n^*a_n^*a_nx_n
    =
    \E_n(a_nx)^*\E_n(a_nx)
    \leq
    \E_n(x^*a_n^*a_nx)
    \leq
    \E_n(x^*A_{\varepsilon}x).
\]
Therefore by the noncommutative Doob maximal inequality,
\begin{align*}
    \left\|
        (a_nx_n)_n
    \right\|_{L_q(\A;\ell_\infty^c)}^2
    \leq
    \left\|
        \bigl(\E_n(x^*A_{\varepsilon}x)\bigr)_n
    \right\|_{L_{q/2}(\A;\ell_\infty)}
    \lesssim_q
    \|x^*A_{\varepsilon}x\|_{q/2}
   \leq
    \|A_{\varepsilon}\|_{r/2}\|x\|_p^2.
\end{align*}
Taking square roots and letting $\varepsilon\to0$, we obtain
\[
    \left\|
        (a_nx_n)_n
    \right\|_{L_q(\A;\ell_\infty^c)}
    \lesssim_q
    \left\|(a_n)_n\right\|_{L_r(\A;\ell_\infty^c)}
    \|x\|_p.
\]
The general case follows immediately by iteration. Indeed, let
\[
 \frac1{q_j}
 =
 \frac1r+\sum_{\ell=1}^j\frac1{p_\ell},
 \qquad
 b_n^{(j)}
 =
 a_nx_n^{(1)}\cdots x_n^{(j)}.
\]
Since $q_j\geq q>2$, repeatedly applying the case $k=1$ to
$b_n^{(j-1)}x_n^{(j)}$ yields the desired estimate.
\end{proof}
\begin{remark}
  The constant in the previous Lemma is the product of the constants $D_{q_1/2}...D_{q_k/2}$
of the noncommutative Doob inequalities for $1/q_l=1/r+1/p_1+...+1/p_l$.
\end{remark}
Taking adjoints gives the corresponding row estimate. More precisely,
apply the column estimate to the adapted sequence $(a_n^*)_n$ and to
the operators
$ x^{(k)*},x^{(k-1)*},\ldots,x^{(1)*}.$
\\ Since $\E_n(x^{(j)*})=x_n^{(j)*}$ and
$ \left\|(a_n^*)_n\right\|_{L_r(\A;\ell_\infty^c)}
 =
 \left\|(a_n)_n\right\|_{L_r(\A;\ell_\infty^r)}$, 
we obtain
\[
 \left\|
  \bigl(x_n^{(1)}\cdots x_n^{(k)}a_n\bigr)_n
 \right\|_{L_q(\A;\ell_\infty^r)}
 \lesssim_{r,p_1,\dotsm,p_k}
 \left\|(a_n)_n\right\|_{L_r(\A;\ell_\infty^r)}
 \prod_{j=1}^k\|x^{(j)}\|_{p_j}.
\]

We now use Lemma \ref{lem:multi-doob} to prove
\begin{proposition}
\label{prop:Multi-Doob-mart}
   Let $(\A_n,\E_n)_n$ be an increasing filtration in a normal semifinite
tracial von Neumann algebra $(\A,\varphi)$. Let $d\geq 2$ and $1<p<\infty$ with
\[
    \frac1p=\sum_{r=1}^d\frac1{p_r}, \quad  1<p_1,\ldots,p_d\leq\infty.
\]
For each \(1\leq r\leq d\), let
\[
    x^{(r)}\in L_{p_r}(\A),
    \qquad
    x_n^{(r)}=\E_n\bigl(x^{(r)}\bigr).
\]
Then
\[
    \left\|
        \left(
            x_n^{(1)}x_n^{(2)}\cdots x_n^{(d)}
        \right)_n
    \right\|_{L_p(\A;\ell_\infty)}
    \lesssim_{p_1,\ldots,p_d}
    \prod_{r=1}^d
    \left\|x^{(r)}\right\|_{p_r}.
\]
\end{proposition}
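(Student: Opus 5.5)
The plan is to derive Proposition~\ref{prop:Multi-Doob-mart} from the column estimate of Lemma~\ref{lem:multi-doob} and its row counterpart, by splitting the product in two. The second ingredient is the elementary Hölder-type factorization
\[
 \bigl\|(u_nv_n)_n\bigr\|_{L_p(\A;\ell_\infty)}
 \leq
 \bigl\|(u_n)_n\bigr\|_{L_{2p}(\A;\ell_\infty^r)}
 \bigl\|(v_n)_n\bigr\|_{L_{2p}(\A;\ell_\infty^c)} .
\]
To prove it, write $u_n=\alpha y_n$ and $v_n=z_n\beta$ with $\alpha,\beta\in L_{2p}(\A)$ and $\sup_n\|y_n\|_\infty,\ \sup_n\|z_n\|_\infty$ nearly optimal. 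Then $u_nv_n=\alpha(y_nz_n)\beta$ is an admissible factorization in the definition of $L_p(\A;\ell_\infty)$.

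\emph{Step 1: the balanced case.} Fix a cut $1\leq m<d$. For the right factor $v_n=x_n^{(m+1)}\cdots x_n^{(d)}$, apply Lemma~\ref{lem:multi-doob} with $a_n=\one$ and $r=\infty$; the column norm of the constant sequence $\one$ in $L_\infty(\A;\ell^c_\infty)$ is $1$. This gives $\|(v_n)\|_{L_{s_2}(\ell_\infty^c)}\lesssim\prod_{r>m}\|x^{(r)}\|_{p_r}$, where $1/s_2=\sum_{r>m}1/p_r$. Symmetrically, the row version stated after Lemma~\ref{lem:multi-doob} gives $\|(x_n^{(1)}\cdots x_n^{(m)})\|_{L_{s_1}(\ell_\infty^r)}\lesssim\prod_{r\leq m}\|x^{(r)}\|_{p_r}$. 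The factorization above then proves the proposition for every tuple $(p_r)$ satisfying three conditions: $s_1=s_2=2p$ for some cut, all $p_r>2$, and $s_1,s_2>2$ (the latter is automatic since $p>1$). If asymmetric splits are wanted, I would replace the factorization by its unbalanced version $L_{s_1}(\ell_\infty^r)\cdot L_{s_2}(\ell_\infty^c)\subset L_p(\ell_\infty)$ for $1/p=1/s_1+1/s_2$. This should follow from the same polar-decomposition trick, rebalancing $\alpha$ and $\beta$ into $L_{2p}$ through powers of $|\alpha^*|$ and $|\beta|$ absorbed into the bounded middle factors; I would verify this first.

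\emph{Step 2: extension by multilinear interpolation.} The map $(x^{(1)},\ldots,x^{(d)})\mapsto(x_n^{(1)}\cdots x_n^{(d)})_n$ is multilinear. The spaces $L_p(\A;\ell_\infty)$ interpolate by the interpolation lemma of Section~\ref{pre}, and the $L_{p_r}(\A)$ form complex scales. I would therefore apply multilinear complex interpolation (Stein's analytic family with the three-lines lemma) in the reciprocal exponents $(1/p_1,\ldots,1/p_d)$. The aim is to reach the whole region $\{1/p_r<1,\ \sum_r 1/p_r<1\}$ starting from the points obtained in Step~1 together with degenerate points. In the degenerate points only one factor is genuinely in $L_{p_r}$ with $p_r>1$ and the others lie in $L_\infty$. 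In that case I would bound the bounded factors by their norm and use the ordinary noncommutative Doob inequality for the single factor. This last step still requires a product-domination argument, for instance through the column estimate with $a_n=x_n^{(1)}\cdots$ kept bounded.

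\emph{Main obstacle.} The hard step is the regime where some $p_r\leq2$, or where $p$ is close to $1$. There Lemma~\ref{lem:multi-doob} is not directly applicable, because it requires all exponents to exceed $2$. Moreover, the convex hull of the admissible points from Step~1 need not contain tuples with one $1/p_r$ close to $1$. My first attempt would be to add the one-heavy endpoints ($p_r$ close to $1$, the other exponents infinite) by proving them directly. I would do this via duality with $L_{p'}(\A;\ell_1)$, rewriting $\tau\bigl(\sum_n x_n^{(1)}\cdots x_n^{(d)}b_n\bigr)$ with $b_n\geq0$ and moving conditional expectations onto the $b_n$ in the spirit of the dual Doob inequality. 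After that I would interpolate; I expect this to be where most of the work lies.
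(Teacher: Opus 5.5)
\textbf{There is a genuine gap: the regime where some $p_r\le 2$, which you leave open, is the whole difficulty, and the partial tools you suggest for it are false.}

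\emph{What works and what fails.} Your Step~1 is correct. It only covers tuples with every $p_r>2$ that admit an integer cut $m$ with $\sum_{r\le m}1/p_r=1/(2p)$ exactly.

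The unbalanced inclusion you propose to verify is false. In $M_N$ with its usual trace, $e_{n1}=e_{n1}\cdot e_{11}$ has row norm at most $N^{1/s_1}$ times column norm at most $1$. On the other hand, pair $(e_{n1})_{n\le N}$ with $(e_{1n})_{n\le N}$: the pairing equals $N$, and writing $e_{1n}=e_{1n}e_{nn}$ shows the $L_{p'}(M_N;\ell_1)$-norm of $(e_{1n})_n$ is at most $N^{1/2+1/(2p')}$. Hence $\|(e_{n1})_n\|_{L_p(M_N;\ell_\infty)}\ge N^{1/(2p)}$, which rules out $s_1>2p$. The adjoint example $e_{1n}=e_{11}\cdot e_{1n}$ rules out $s_2>2p$, so $s_1=s_2=2p$ is forced.

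Everything therefore rests on the Step~2 endpoints where one exponent is $\le2$. In fact the points $te_r$ ($0\le t<1$) together with the origin already span the whole region, so Step~1 is not even needed. The justification you sketch for these endpoints does not work:
\begin{itemize}
\item Bounded factors cannot be pulled out. The estimate $\|(u_nv_n)_n\|_{L_p(\ell_\infty)}\lesssim\sup_n\|v_n\|_\infty\|(u_n)_n\|_{L_p(\ell_\infty)}$ fails; take $u_n=e_{11}$, $v_n=e_{1n}$.
\item The column lemma cannot produce these endpoints, since it needs $q>2$ and all $p_j>2$.
\item The duality route is not specified.
\end{itemize}

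\emph{The missing idea} is to split a single positive factor by fractional powers. Choose $1\le m\le d$ and $\theta\in[0,1]$ with
\[
\frac1{2p}=\sum_{j<m}\frac1{p_j}+\frac{\theta}{p_m}=\frac{1-\theta}{p_m}+\sum_{j>m}\frac1{p_j}.
\]
Reduce to $x^{(m)}\ge0$ at the cost of a factor $4$, and write the product as $b_nc_n$ with
\[
b_n=x_n^{(1)}\cdots x_n^{(m-1)}(x_n^{(m)})^{\theta},\qquad c_n=(x_n^{(m)})^{1-\theta}x_n^{(m+1)}\cdots x_n^{(d)}.
\]
The continuous parameter $\theta$ gives exact balancing for every tuple. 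Moreover $p_j>2$ holds automatically for $j\neq m$. So the only possibly bad exponent sits in the adapted factor $a_n=(x_n^{(m)})^{\eta}$, $\eta\in\{\theta,1-\theta\}$. Lemma~\ref{lem:multi-doob} is then applied to $c_n$ and $b_n^*$ with $q=2p>2$.

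Its hypothesis on $a_n$ amounts to $\|((x_n^{(m)})^{2\eta})_n\|_{L_{p_m/(2\eta)}(\A;\ell_\infty)}\lesssim\|x^{(m)}\|_{p_m}^{2\eta}$. This holds as follows:
\begin{itemize}
\item If $2\eta\le1$, use Doob's inequality at $p_m$ and operator monotonicity of $t\mapsto t^{2\eta}$.
\item If $1<2\eta\le2$, use the operator Jensen inequality $(\E_nx^{(m)})^{2\eta}\le\E_n\bigl((x^{(m)})^{2\eta}\bigr)$ and Doob at index $p_m/(2\eta)>1$.
\end{itemize}
Your balanced row--column factorization then concludes.

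Incidentally, the same splitting with $\theta=1/2$ (only the monotone case) proves your vertices $te_r$. After that your multilinear interpolation scheme would also go through, but as written the proposal lacks the argument that carries the difficulty.
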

\begin{proof}
Choose $1\leq m\leq d$ and $\theta\in[0,1]$ such that
\[
 \frac1{2p}
 =
 \sum_{j=1}^{m-1}\frac1{p_j}
 +
 \frac{\theta}{p_m}
 =
 \frac{1-\theta}{p_m}
 +
 \sum_{j=m+1}^d\frac1{p_j}.
\]
Assume first that $0<\theta<1$. By loosing at most a  factor 4, we can assume that $x^{(m)}\geq 0$. Note also that $p_j\leq 2$ for at most  one single $p_j$ and if this occurs then $j=m$.

Put \[ b_n = x_n^{(1)}\cdots x_n^{(m-1)} {x_n^{(m)}}^\theta, \qquad  c_n = {x_n^{(m)}}^{1-\theta} x_n^{(m+1)}\cdots x_n^{(d)}.\]
Then
\[
 x_n^{(1)}\cdots x_n^{(d)}=b_nc_n.
\]

We apply Lemma \ref{lem:multi-doob} to $c_n$ and $b_n^*$ for $q=2p>2$.
As $p_j>2$ for $j\neq m$, it suffices to prove that
$\|({x_n^{(m)}}^{2\eta})\|_{L_{p_m/(2\eta)}(\N;\ell_\infty)}\lesssim \|x^{(m)}\|_{p_m}^{2\eta}$ for $\eta=\theta$ or $1-\theta$.

If $\eta>1/2$, by operator convexity, we have $0\leq ({x_n^{(m)}}^{2\eta})\leq \mathbb E_n \Big({x^{(m)}}^{2\eta}\Big)$. But ${x^{(m)}}^{2\eta}\in L_{p_m/(2\eta)}(\mathcal N)$ by construction and we can use the Doob inequality for the index
$p_m/(2\eta)>1$ to conclude.

If $\eta\leq 1/2$, we first apply the Doob inequality for the index $p_m$, so
$0\leq x_n^{(m)}\leq a$ for some $a\in L_{p_m}(\mathcal N)$ with $\|a\|_{p_m}\lesssim \|x^{(m)}\|_{p_m}$. By operator monotony,
$0\leq {x_n^{(m)}}^{2\eta}\leq a^{2\eta}$ and we are done.

Therefore, we have shown
\[
 \|(b_n)_n\|_{L_{2p}(\A;\ell_\infty^r)}
 \lesssim
 \prod_{j=1}^{m-1}\|x^{(j)}\|_{p_j}
 \|x^{(m)}\|_{p_m}^{\theta}, \quad
 \|(c_n)_n\|_{L_{2p}(\A;\ell_\infty^c)}
 \lesssim
 \|x^{(m)}\|_{p_m}^{1-\theta}
 \prod_{j=m+1}^{d}\|x^{(j)}\|_{p_j}.
\]
Hence, by the factorization characterization of
$L_p(\A;\ell_\infty)$,
\begin{align*}
 \left\|
  \bigl(x_n^{(1)}\cdots x_n^{(d)}\bigr)_n
 \right\|_{L_p(\A;\ell_\infty)}
 &\leq
 \|(b_n)_n\|_{L_{2p}(\A;\ell_\infty^r)}
 \|(c_n)_n\|_{L_{2p}(\A;\ell_\infty^c)}
 \lesssim
 \prod_{j=1}^d\|x^{(j)}\|_{p_j}.
\end{align*}

If $\theta=0$ all the indices $p_j>2$ and we can  take
\[
 b_n=x_n^{(1)}\cdots x_n^{(m-1)},
 \qquad
 c_n=x_n^{(m)}\cdots x_n^{(d)}.
\]
Similarly if $\theta=1$, we take
\[
 b_n=x_n^{(1)}\cdots x_n^{(m)},
 \qquad
 c_n=x_n^{(m+1)}\cdots x_n^{(d)}.
\]
The same argument, without fractional powers, proves the corresponding
endpoint estimates.
\end{proof}
\begin{proof}[Proof of Proposition \ref{prop:Multi-Doob}.] Fix $N\geq 1$. We consider the von Neumann algebra $\A=M_{N+1}(\N)$ with its natural trace. It has a filtration given by $\A_n=M_{n+1}(\N)\oplus \N^{N-n}$ for $1\leq n\leq N$.

  We apply Proposition \ref{prop:Multi-Doob-mart} with $d=2\ell$ and the indices
  $2p_1, 2p_1, 2p_2, 2p_2, ..., 2p_\ell,2p_\ell>1$. We choose for $m=1,...,\ell$
  $$x^{(2m-1)}= \sum_{j=1}^N (a_{N-j+1}^{(m)})^{1/2}\otimes e_{1,j+1}, \qquad      x^{(2m)}=\sum_{j=1}^N \xi^{(m)}_{N-j+1}(a_{N-j+1}^{(m)})^{1/2}\otimes e_{j+1,1}.$$
  Easy computations gives that
  $\mathbb E_n x^{(2m-1)}\mathbb E_n x^{(2m)}= \sum_{j> N-n} \xi_j^{(m)}
  a_j^{(m)} \otimes e_{1,1}$ and we have that
  $\|x^{(2m)}\|_{2p_m}\|x^{(2m-1)}\|_{2p_m}=\|\sum_i a_i\|_{p_m}$. The
  conclusion of Proposition \ref{prop:Multi-Doob-mart} is exactly what
  we are looking at by noticing that for a sequence $(x_n)$ in $L_p(\N)$ its
  $L_p(\N;\ell_\infty)$-norm is the same as that of $(x_n\otimes e_{1,1})$
  in $L_p(\A;\ell_\infty)$.
\end{proof}
\begin{remark}\label{extreme case}
  Using that $x\mapsto x^\theta$ is operator monotone for $0<\theta<1$, one can easily deduce from Proposition  \ref{prop:Multi-Doob-mart} that for $a_i\in L_p^+(\mathcal N)$, and $r>0$ provided that $p>r$, we have
  \begin{align}\label{ai} \|\Big((\sum_{j>i} a_j)^r\Big)_i\|_{L_{p/r}(\N;\ell_\infty)}\lesssim_{p,r} \|\sum _i a_i\|_p.\end{align}
    It is false in general that for $r>1$, one has, for $a_i\geq 0$,
  $$\|(a_i^r)_{i\in I}\|_
    {L_p(\N;\ell_\infty)}\lesssim_{p,r} \|(a_i)_{i\in I}\|_
    {L_{rp}(\N;\ell_\infty)}^r.$$
    One can see it easily for $r=2$ by taking $\N=M_N$, $a=N e_{1,1} +Id$ and
    $I=\{b\in M_N^+ \,|\, 0\leq b\leq a\}$. If $A$ is such that
    $0\leq b^2\leq A$, then $A\geq N/4 Id$. This would give
    $N^{1+1/p}\leq_p N$.
\end{remark}

\section{Proof of the localized maximal estimate--Proposition~\ref{Key1-es}}

The purpose of this section is to prove the localized maximal
estimate in Proposition~\ref{Key1-es}. The proof follows the general
strategy of Bourgain \cite[Lemma~7.32]{Bo89}. In Subsection~\ref{subsec:proof-Key1}, the extrapolation theorem from
Section~\ref{sec:ste} and duality reduce the desired maximal estimate
to the $q$th-moment estimate \eqref{eq:q-moment-Key1}, which is then
proved using the combinatorial method from
Section~\ref{combinatorial method} and the $L_2$-estimate in
Lemma~\ref{L_2:Lem}. In Subsection~\ref{subsec:proof-L2}, we construct the approximate
Fourier multipliers needed to prove the $L_2$-estimate in
Lemma~\ref{L_2:Lem}. The Fourier-analytic part follows Bourgain's argument. By contrast,
the treatment of the moment expansion in
\eqref{eq:q-moment-Key1} presents a genuinely new difficulty.
The scalar argument is immediate, but its operator-valued analogue
is highly nontrivial because positive operators need not commute and
their products need not be positive. This
difficulty is overcome by the combinatorial method and a multilinear noncommutative Doob maximal inequality developed in
Section~\ref{combinatorial method}.

\subsection{Proof of Proposition \ref{Key1-es}}
\label{subsec:proof-Key1}
\begin{proof}
Without loss of generality, we may assume that
$N_0=2^{k_0}$ for some sufficiently large integer $k_0$. Recall from
Section~\ref{pre} that
\[
 \mathcal A_{\mathbb Z}(\mathcal N)
 =
 \ell_\infty(\mathbb Z)\overline\otimes\mathcal N,
\]
equipped with the tensor-product trace. As in
\cite[Section~4.2]{CHW24}, it suffices to prove that
\begin{equation}\label{Key1-es2}
 \left\|
 \bigl(f*K_{2^k}\bigr)_{k_0<k<2k_0}
 \right\|_{L_p(\mathcal A_{\mathbb Z}(\mathcal N);\ell_\infty)}
 \lesssim_{p,P}(\log k_0)
 \|f\|_{L_p(\mathcal A_{\mathbb Z}(\mathcal N))}
\end{equation}
for every finitely supported $\mathcal S(\mathcal N)_+$-valued function $f$ on
$\mathbb Z$. By classical cutting arguments, we can assume that $\mathcal N$ is finite and its trace is normalized.

Fix $f$ and $k_0$. Since $f$ is finitely supported, the norms in
\eqref{Key1-es2} will not change if we replace $\mathbb Z$ by a a large
finite cyclic group {\color{blue}$\mathbb Z_N$} (keeping the same formal definition
for $K$) equipped with the counting measure. By homogeneity, we can consider the normalized counting measure and it is therefore enough to prove
\begin{equation*}
 \left\|(f*K_{2^k})_{k_0<k<2k_0}\right\|_
 {L_p(\mathcal A_{\mathbb Z_N}(\mathcal N);\ell_\infty)}
 \lesssim_{p,P} (\log k_0)
 \|f\|_{L_p(\mathcal A_{\mathbb Z_N}(\mathcal N))}.
\end{equation*}

For the given target exponent $p>1$, choose an integer $q>2$
such that
\[
 p_0:=\frac{q}{q-1}<p.
\]
By Theorem \ref{pro:stein extrapolation}  followed by the Marcinkiewicz interpolation theorem \cite{JX07}
with the trivial $L_\infty$-estimate, it suffices to prove for $f\in A_{\mathbb Z_N}(\mathcal N)_+$
\begin{equation}\label{Key1-es4}
 \left\|(f*K_{2^k})_{k_0<k<2k_0}\right\|_
 {L_1(\mathcal A_{\mathbb Z_N}(\mathcal N);\ell_\infty)}
 \lesssim_{q,P}(\log k_0)
 \|f\|_{L_{p_0}(\mathcal A_{\mathbb Z_N}(\mathcal N))}.
\end{equation}
By duality, \eqref{Key1-es4} follows once we prove
\begin{equation*}
 \left\|
  \sum_{k_0<k<2k_0}g_k*K_{2^k}
 \right\|_{L_q(\mathcal A_{\mathbb Z_N}(\mathcal N))}
 \lesssim_{q,P}\log k_0
\end{equation*}
for every finite sequence $(g_k)_k$ in $A_{\mathbb Z_N}(\mathcal N)_+$ satisfying
\[
 \left\|
  \sum_{k_0<k<2k_0}g_k
 \right\|_{L_\infty(\mathcal A_{\mathbb Z_N}(\mathcal N))}
 \leq1.
\]

Let $M\simeq\log k_0$, with the implicit constant to be chosen
below, and, for $1\leq i\leq M$, set
\[
 \mathcal I_i
 :=
 \left\{
  k\in\mathbb Z:
  k_0<k<2k_0,
  \ k-k_0\equiv i\pmod M
 \right\}.
\]
Here $i=M$ represents the zero residue class. Then
\begin{align*}
 \left\|
  \sum_{k_0<k<2k_0}g_k*K_{2^k}
 \right\|_q
 &\leq
 \sum_{i=1}^M
 \left\|
  \sum_{k\in \mathcal I_i}g_k*K_{2^k}
 \right\|_q.
\end{align*}
It therefore suffices to prove independently of $i$,
\begin{equation*}
 \left\|
  \sum_{k\in \mathcal I_i}g_k*K_{2^k}
 \right\|_q
 \lesssim_{q,P} 1
\end{equation*}
provided that
\begin{equation}\label{Key1-es7}
 \left\|\sum_{k\in\mathcal I_i}g_k\right\|_\infty\leq1.
\end{equation}

Fix $i$ and to simplify, denote
\[a_k:=g_k*L_k\quad\mathrm{and}\quad L_k:=K_{2^k}, \qquad(k\in\mathcal I_i),
 \qquad
 L_{k_0}:=K_{2^{k_0}}.
\]
The original indices are retained. In particular, any two distinct
indices $k,\ell\in\mathcal I_i$ satisfy
\begin{equation}\label{eq:M-separation}
 |k-\ell|\geq M.
\end{equation}
Thus, we want to prove
\begin{equation*}
 \left\|
  \sum_{k\in\mathcal I_i}a_k  \right\|_q \lesssim_{q,P} 1.
\end{equation*}
Since each
$L_k$ is a positive probability kernel and $g_k\geq0$, we have
$a_k\geq0$. Moreover, \eqref{Key1-es7} implies
$0\leq g_k\leq\one$, and hence
\[
 0\leq a_k=g_k*L_k\leq\one.
\]
Thus, it remains to prove
\begin{equation}\label{eq:q-moment-Key1}
 \left\|\sum_{k\in\mathcal I_i}a_k\right\|_q^q
 =
 \sum_{k_1,\ldots,k_q\in\mathcal I_i}
 \tr(a_{k_1}\cdots a_{k_q})
 \lesssim_{q,P}1.
\end{equation}
Recall from Section~\ref{combinatorial method} that the moment sum
in \eqref{eq:q-moment-Key1} is decomposed as
\begin{equation*}
 \left\|\sum_{k\in\mathcal I_i}a_k\right\|_q^q
 =
 \Sigma_{\mathrm{rep}}+\Sigma_{\mathrm{inj}},
\end{equation*}
where
\[
 \Sigma_{\mathrm{rep}}
 =
 \sum_{\substack{k_1,\ldots,k_q\in\mathcal I_i\\
                   k_r=k_s\ \text{for some }1\leq r<s\leq q}}
 \tr(a_{k_1}\cdots a_{k_q})
\]
and
\[
 \Sigma_{\mathrm{inj}}
 =
 \sum_{\substack{k_1,\ldots,k_q\in\mathcal I_i\\
                   k_r\neq k_s\ \text{for all }1\leq r<s\leq q}}
 \tr(a_{k_1}\cdots a_{k_q}).
\]
For $k\in\mathcal I_i$, recall that
\begin{equation*}
 D_k
 =
 \sum_{\substack{k_2,\ldots,k_q\in\mathcal I_i\\
                  k<k_r\ \text{for all }2\leq r\leq q\\
                  k_r\neq k_s\ \text{for all }2\leq r<s\leq q}}
 a_{k_2}\cdots a_{k_q}.
\end{equation*}
By cyclicity of the trace, every ordered $q$-tuple of pairwise
distinct indices can be cyclically rotated in a unique way so that
its smallest index occurs first. Therefore,
\begin{equation}\label{eq:Sigma-inj-Key1}
 \Sigma_{\mathrm{inj}}
 =
 q\sum_{k\in\mathcal I_i}\tr(a_kD_k),
\end{equation}

The repeated-index term $\Sigma_{\mathrm{rep}}$ will be estimated by
Proposition~\ref{prop:rep}. To handle the distinct-index term
$\Sigma_{\mathrm{inj}}$, we replace the first factor $a_k$ in
\eqref{eq:Sigma-inj-Key1} by the positive element $b_k$ defined in \eqref{eq:bk-Key1} and
use Proposition~\ref{prop:inj term} to estimate the family $(D_k)_k$.
The error arising from this replacement will be controlled by the
following $L_2$-estimate.

\begin{lemma}\label{L_2:Lem}
Let $k_1,\ldots,k_q\in\mathcal I_i$ be pairwise distinct and suppose
that
\[
 k_1=\min\{k_1,\ldots,k_q\}.
\]
Then
\begin{equation*}
 \left\|
  (\widetilde L_{k_1}-\widetilde L_{k_0})
  *(a_{k_2}\cdots a_{k_q})
 \right\|_2
 \lesssim_{q,P} k_0^{-q}.
\end{equation*}
Here, $ \widetilde L$ has kernel given by $ \widetilde L(n):=L(-n).$
\end{lemma}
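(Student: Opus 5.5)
We work on the Fourier side in the $\mathbb Z_N$ variable. Since the operators act only on the group variable and the trace is normalized, Plancherel in $L_2(\mathcal A_{\mathbb Z_N}(\mathcal N))$ gives
\[
\bigl\|(\widetilde L_{k_1}-\widetilde L_{k_0})*F\bigr\|_2^2=\sum_{\xi}\bigl|\widehat L_{k_1}(\xi)-\widehat L_{k_0}(\xi)\bigr|^2\,\|\widehat F(\xi)\|_{L_2(\mathcal N)}^2,
\]
with $F=a_{k_2}\cdots a_{k_q}$. The plan follows Bourgain's proof of \cite[Lemma~7.32]{Bo89}. The key point is that $F$ is a product of functions each of which is smoothed at scale $2^{k_r}$ with $k_r\geq k_1+M$, because of \eqref{eq:M-separation}. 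Hence the Fourier mass of $F$ should be concentrated near rationals $a/s$ of small denominator, within distance roughly $2^{-d(k_1+M)}$. At such frequencies, $\widehat L_{k_1}$ and $\widehat L_{k_0}$ should nearly coincide.

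\textbf{Step 1: approximate multipliers.} Using the circle method (Weil/Hua bounds on $S(a/s)$ and the Gauss-sum approximation on major arcs), write each $\widehat L_k=\widehat M_k+\widehat E_k$. Here $\widehat M_k(\xi)=\sum_{s\le 2^{\epsilon k}}\sum_{(a,s)=1}S(a/s)\,\widehat\Phi(2^{dk}(\xi-a/s))\,\eta(2^{\kappa k}(\xi-a/s))$, where $\Phi$ is the continuous average of $\delta_{P(t)}$ and $\eta$ is a smooth cutoff. The error satisfies $\|\widehat E_k\|_\infty\lesssim 2^{-ck}$. Since $k_r>k_0$, all such errors are $O(2^{-ck_0})$, which is far smaller than $k_0^{-q}$. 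This is the kind of multiplier construction of Subsection~\ref{subsec:proof-L2}.

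\textbf{Step 2: reduce $F$ to its main term.} Expand $a_{k_r}=g_{k_r}*M_{k_r}+g_{k_r}*E_{k_r}$ inside the product. Any term containing an error factor is bounded in $L_2$ by H\"older's inequality, using $\|a_{k_r}\|_\infty\le1$ and $\|g*E\|_2\le\|\widehat E\|_\infty\|g\|_2$. This costs at most $2^{-ck_0}$ times a constant depending on $q$.

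For the main term $\prod_r g_{k_r}*M_{k_r}$, which is not commutative in $\mathcal N$, we only use the group-variable Fourier support. That support is contained in the sumset of the supports of the $\widehat M_{k_r}$. This sumset lies within about $q\,2^{-\kappa' k_2}$ of rationals whose denominators divide a product of at most $q-1$ numbers $\le 2^{\epsilon k_r}$. Taking $\epsilon$ small, those denominators are $\le 2^{q\epsilon\cdot 2k_0}$.

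\textbf{Step 3: compare $\widehat L_{k_1}$ and $\widehat L_{k_0}$ on that set.} Near $a/s$ with $s\le 2^{2q\epsilon k_0}$ we have two cases.
\begin{itemize}
\item If $s>1$, both $\widehat L_{k_1}$ and $\widehat L_{k_0}$ are bounded by $|S(a/s)|+2^{-ck_0}\lesssim s^{-1/d}$. This alone is not small, which is exactly why $M\simeq\log k_0$ is needed. The sum over denominators forces the true $L_2$ contribution down via Bourgain's bound $\sum_{s\sim 2^j}\sum_a |S(a/s)|^2\lesssim 2^{-cj}$. We combine this with the fact that $\widehat M_{k_r}$ restricted to denominators $\geq 2^j$ has small $L_2$ operator norm. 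Hence the factor-wise frequency localization yields decay $2^{-cj}$, and we truncate at $2^j\approx k_0^{C q}$.
\item If the denominator is small, at scale $2^{-dk_2}$ from $a/s$ with $k_2\ge k_1+M$, we have $\widehat\Phi(2^{dk_1}\theta)-\widehat\Phi(2^{dk_0}\theta)$ with $|\theta|\lesssim 2^{-d(k_1+M)}$. Since $k_0<k_1$, this is $O(2^{d(k_1-k_2)})\lesssim 2^{-dM}$.
\end{itemize}
Choosing the implicit constant in $M\simeq\log k_0$ large makes $2^{-dM}\le k_0^{-q}$.

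\textbf{Main obstacle.} The hardest step is Step 3 for medium denominators $s\le k_0^{C}$. There, $\widehat L_{k_1}-\widehat L_{k_0}$ differ only through the $\widehat\Phi$ factors, which agree up to $O(2^{-dM})$ on the relevant neighborhoods. So we would group these terms by common denominator, as in \eqref{eq:Qu-v2}, and bound them directly by that $2^{-dM}$ estimate. We expect the cross terms between different denominators to be handled by orthogonality in $\xi$.
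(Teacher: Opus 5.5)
\emph{Verdict: there is a genuine gap in the proof.} Your overall picture is the right one: approximate each factor by a major-arc multiplier, look at the group-variable Fourier support of the product, and compare $\widehat L_{k_1}$ with $\widehat L_{k_0}$ there. The quantitative set-up of Steps~1--2, however, cannot work.

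\textbf{Why Steps~1--2 fail.} The only smallness available at the end comes from $|w_{N_1}(\beta)-w_{N_0}(\beta)|\approx|\beta|N_1^d$ on the Fourier support. In the worst case $\min_{r\ge2}k_r=k_1+M$ this is no better than $2^{-dM}$, because arcs narrower than $N_r^{-d}$ would make the approximation error of order one. Since $M\simeq\log k_0$ is forced by the $\log\log N_0$ loss in Proposition~\ref{Key1-es}, the gain is only $k_0^{-O(1)}$.
\begin{itemize}
\item \emph{The main term.} The gain multiplies $\|F_{\mathrm{main}}\|_2$, where $F_{\mathrm{main}}=\prod_r g_{k_r}*M_{k_r}$. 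For a noncommutative product the only available bound is $\prod_r\|g_{k_r}*M_{k_r}\|_\infty\le\prod_r\|\mathcal F^{-1}(\widehat M_{k_r})\|_{\ell_1}$. This grows like a power of the largest denominator kept; the paper's bound is $4^{s}$ for denominators $<2^{s+1}$. With denominators up to $2^{\epsilon k_r}$ you get $2^{c\epsilon k_0}$, and no $k_0^{-O(1)}$ gain absorbs it.
\item \emph{Step~2.} After expanding or telescoping, a term with one error factor also contains factors $g_{k_j}*M_{k_j}$ rather than $a_{k_j}$, so $\|a_{k_j}\|_\infty\le1$ is not available for them. Each such factor costs up to $4^{\epsilon k_j}$. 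Truncating denominators at $2^{\epsilon k_r}$ only gives an error $\sum_{s>\epsilon k_r}2^{-s\delta_d}\simeq2^{-\epsilon\delta_d k_r}$, and all $k_j\in(k_0,2k_0)$ are comparable, so the loss wins.
\item \emph{The claimed error $\|\widehat E_k\|_\infty\lesssim2^{-ck}$.} This is incompatible with the localization you use in Step~3. For $w_{N_1}-w_{N_0}$ to be small on the support, the arcs must have width $\lesssim D(\min_{r\ge2}N_r)^{-d}$ with $D\le2^{dM}k_0^{-O(1)}$. For the factor with the smallest $k_r$, the discarded tail of $w_{N_r}$ then has size $D^{-1/d}=k_0^{-O(1)}$ by \eqref{eq:wdec-v2}, not $2^{-ck}$.
\end{itemize}

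\textbf{The missing idea: hierarchical truncation, as in the paper.}
\begin{itemize}
\item \emph{Approximants.} Approximate $\widehat L_{k_r}$ by $\Omega_r$, keeping only denominators $<2^{s_r+1}$ with $s_r=\lceil c_r\log k_0\rceil$, and arcs of width $\simeq DN_r^{-d}$. Then $\|\widehat L_{k_r}-\Omega_r\|_\infty\lesssim N_r^{-c_0}+s_rD^{-1/d}+2^{-s_r\delta_d}$. Moreover $B_r=(\Omega_r\widehat g_{k_r})^\vee$ satisfies $\|B_r\|_\infty\lesssim4^{s_r}$, which is only polynomial in $k_0$.
\item \emph{Ordered telescoping.} Replace $a_{k_r}$ by $B_r$ in increasing order of $k_r$. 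In each error term, the factors with smaller $k_j$ are $B_j$'s (cost $4^{s_j}$), and the factors with larger $k_j$ are still $a_{k_j}$ (cost $1$). Choosing the $c_r$ rapidly increasing along this order gives $4^{s_2+\cdots+s_{r-1}}2^{-s_r\delta_d}\le k_0^{-q-1}$.
\item \emph{Main term.} $B_2\cdots B_q$ has $L_2$-norm $\lesssim k_0^{c}$. Its Fourier support lies within $C_qD(\min_{r\ge2}N_r)^{-d}$ of rationals with denominators $\le2^{C_qs_q}=k_0^{O(1)}$. There, Proposition~\ref{Bo89-es1}(iii) and $|1-w_N(\beta)|\lesssim|\beta|N^d$ give $\sup|\widehat L_{k_1}-\widehat L_{k_0}|\lesssim D2^{-Md}+N_0^{-1/2}$. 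One takes $D=k_0^{O(1)}$, and then takes the constant in $M\simeq\log k_0$ large.
\end{itemize}

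No decay of $S(a/s)$ is needed at this last stage: both multipliers are $\approx S(a/s)w_N(\cdot-a/s)$ with $w_N\approx1$ at every such centre. So the first bullet of your Step~3, and the grouping by common denominators with orthogonality, are unnecessary. The ``factor-wise'' decay argument in that bullet would not work anyway, since sums of rationals with small denominators produce new, larger denominators.
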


The proof of Lemma~\ref{L_2:Lem} is given in the next subsection.
Assuming this lemma, we continue the proof of
Proposition~\ref{Key1-es}.
For $k\in\mathcal I_i$, set
\begin{equation}\label{eq:bk-Key1}
 b_k:=g_k*L_{k_0}.
\end{equation}
By \eqref{Key1-es7},
\begin{align*}
 \sum_{k\in\mathcal I_i}b_k
 &=
 \left(\sum_{k\in\mathcal I_i}g_k\right)*L_{k_0}
 \leq
 \one*L_{k_0}
 =\one.
\end{align*}
 And, by \eqref{eq:Sigma-inj-Key1},
\begin{align}
\left|
 \Sigma_{\mathrm{inj}}
 -q\sum_{k\in\mathcal I_i}\tr(b_kD_k)
 \right|
=
 q\left|
 \sum_{\substack{k_1,\ldots,k_q\in\mathcal I_i\\
                  k_1=\min\{k_1,\ldots,k_q\}\\
                  k_1,\ldots,k_q\ \mathrm{pairwise\ distinct}}}
 \tr\left((a_{k_1}-b_{k_1})a_{k_2}\cdots a_{k_q}\right)
 \right|.
 \label{eq:structural-difference}
\end{align}
Since
\[
 a_{k_1}-b_{k_1}
 =
 g_{k_1}*(L_{k_1}-L_{k_0}),
\]
the adjointness relation for convolution gives
\begin{align*}
\tr\left((a_{k_1}-b_{k_1})a_{k_2}\cdots a_{k_q}\right)
=
 \tr\left(
  g_{k_1}
  \bigl[(\widetilde L_{k_1}-\widetilde L_{k_0})
        *(a_{k_2}\cdots a_{k_q})\bigr]
 \right).
\end{align*}
Hence, by the Cauchy--Schwarz inequality, Lemma \ref{L_2:Lem}, and
$\|g_{k_1}\|_2\leq\|g_{k_1}\|_\infty\leq1$,
\begin{align*}
\left|
 \tr\left((a_{k_1}-b_{k_1})a_{k_2}\cdots a_{k_q}\right)
 \right|
\leq
 \|g_{k_1}\|_2
 \left\|
  (\widetilde L_{k_1}-\widetilde L_{k_0})
  *(a_{k_2}\cdots a_{k_q})
 \right\|_2
 \lesssim_{q,P}k_0^{-q}.
\end{align*}
Since $|\mathcal I_i|\leq k_0$, the number of tuples in
\eqref{eq:structural-difference} is at most $k_0^q$. Consequently,
\[
 \left|
  \Sigma_{\mathrm{inj}}
  -q\sum_{k\in\mathcal I_i}\tr(b_kD_k)
 \right|
 \lesssim_{q,P} 1.
\]
In particular using the duality
pairing between $L_q(\N;\ell_1)$ and
$L_{\frac{q}{q-1}}(\N;\ell_\infty)$,
\begin{equation*}
 \Sigma_{\mathrm{inj}}
 \lesssim_{q,P}
 1
 + q\left|\sum_{k\in\mathcal I_i}\tr(b_kD_k)\right| \lesssim_{q,P} 1+ \left\|(b_k)_{k\in \mathcal I_i}\right\|_{L_q(\N;\ell_1)}
 \left\|(D_k)_{k\in \mathcal I_i}\right\|_
 {L_{\frac{q}{q-1}}(\N;\ell_\infty)}.
\end{equation*}
Put
\[
 A=\sum_{k\in\mathcal I_i}a_k.
\]Consequently, Proposition \ref{prop:inj term} implies
\[
 \Sigma_{\mathrm{inj}}
 \lesssim_{q,P}\|A\|_q^{q-1}+1.
\]
By Proposition \ref{prop:rep},
\[
 |\Sigma_{\mathrm{rep}}|
 \lesssim_q
 \|A\|_{q-1}^{q-1}
 \leq
 \|A\|_q^{q-1}.
\]
Combining the estimates for the repeated-index and distinct-index
terms, we obtain
\[
 \|A\|_q^q
 =
 \Sigma_{\mathrm{rep}}+\Sigma_{\mathrm{inj}}
 \lesssim_{q,P}
 \|A\|_q^{q-1}+1.
\]
This gives
\[
\|A\|_q
 \lesssim_{q,P} 1,
\]
which proves \eqref{eq:q-moment-Key1}.
\end{proof}

\subsection{Approximate Fourier multipliers and the
\texorpdfstring{$L_2$}{L2}-estimate}
\label{subsec:proof-L2}
In this subsection, we recall Bourgain's construction of approximate
Fourier multipliers and his Fourier-analytic estimates
\cite[Lemma~7.32]{Bo89} to prove Lemma~\ref{L_2:Lem}. The noncommutativity only plays a minor role here in \eqref{telesc} and the
reader familiar with Bourgain's arguments can skip this part.

Let $\mathbb T$ be the dual group of $\mathbb Z$, identified with
$[0,1)$, and let
\[
 K_N=\frac1N\sum_{n=1}^N\delta_{P(n)}.
\]
Its Fourier transform is
\begin{equation*}
 \widehat K_N(\alpha)
 =
 \frac1N\sum_{n=1}^N e^{-2\pi iP(n)\alpha},
 \qquad \alpha\in\mathbb T.
\end{equation*}
For $s\geq0$, define
\[
 \mathfrak R_s
 :=
 \left\{
  \xi=\frac aq\in\mathbb T\cap\mathbb Q:
  (a,q)=1,
  \ 2^s\leq q<2^{s+1}
 \right\}.
\]
For $\xi=a/q$ in reduced form, define
\begin{equation}\label{eq:S-v2}
 S(\xi)
 :=
 \frac1q\sum_{0\leq r<q}e^{-2\pi i\xi P(r)}.
\end{equation}
Let
\begin{equation}\label{w_N}
 w_N(\beta)
 :=
 \frac1N\int_0^N e^{-2\pi i\beta P(y)}\,dy
 =
 \int_0^1e^{-2\pi i\beta P(Ny)}\,dy.
\end{equation}
Choose a smooth function $\phi$ which equals one on
$[-1/10,1/10]$ and is supported in $[-1/2,1/2]$, and define
\begin{equation*}
 \widehat L_{s,N}(\alpha)
 =
 \sum_{\xi\in\mathfrak R_s}
 S(\xi)w_N(\alpha-\xi)\phi(10^s(\alpha-\xi)).
\end{equation*}

We shall use the following standard scalar estimates; see
\cite[Section 6]{Bo89}.

\begin{proposition}\label{Bo89-es1}
There exist constants $\delta_d,\varepsilon_0>0$, depending only
on $d=\deg P$, and $c_0=c_0(P)>0$ such that the following
assertions hold.
\begin{enumerate}
 \item[\rm(i)] If $\xi\in\mathfrak R_s$, then
 \begin{equation}\label{eq:Sdec-v2}
  |S(\xi)|\lesssim_{P} 2^{-s\delta_d}.
 \end{equation}

 \item[\rm(ii)] For every $\beta\in\mathbb T$,
 \begin{equation}\label{eq:wdec-v2}
  |1-w_N(\beta)|\lesssim_{P} |\beta|N^d,
  \qquad
  |w_N(\beta)|
  \lesssim_{P} (1+|\beta|N^d)^{-1/d}.
 \end{equation}

 \item[\rm(iii)] If $\xi=a/q$ is reduced,
 $q<N^{\varepsilon_0}$, and
 $|\alpha-\xi|<N^{-d+\varepsilon_0}$, then
 \begin{equation*}
  \left|
   \widehat K_N(\alpha)
   -S(\xi)w_N(\alpha-\xi)
  \right|
  \lesssim_{P} N^{-1/2}.
 \end{equation*}

 \item[\rm(iv)] For every $\alpha\in\mathbb T$,
 \begin{equation*}
  \left|
   \widehat K_N(\alpha)
   -\sum_{s=0}^\infty\widehat L_{s,N}(\alpha)
  \right|
  \lesssim_{P} N^{-c_0}.
 \end{equation*}
\end{enumerate}
\end{proposition}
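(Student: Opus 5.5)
The plan is to prove the four assertions of Proposition~\ref{Bo89-es1} separately, using classical tools from analytic number theory: Hua/Weil bounds for complete sums, van der Corput's lemma, summation by parts, and Weyl's inequality with Dirichlet approximation. These are all scalar statements, so no noncommutative input is needed.

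For (i), I would use the multiplicativity of complete exponential sums. By the Chinese remainder theorem, if $q=q_1q_2$ with $(q_1,q_2)=1$, then $qS(a/q)$ factors as a product of the corresponding sums modulo $q_1$ and $q_2$ (with twisted numerators). This reduces the estimate to prime powers $p^t$:
\begin{itemize}
\item for $t=1$, Weil's bound gives $|\sum_{r<p}e(-aP(r)/p)|\le (d-1)\sqrt p$, except for the finitely many primes dividing the coefficients, which cost only a $P$-dependent constant;
\item for $t\ge2$, the standard Hua-type argument (a stationary-phase reduction from $p^t$ to $p^{t-1}$ or $p^{t-2}$) gives a saving of $p^{-t/d}$.
\end{itemize}
Multiplying the prime-power bounds yields $|S(a/q)|\lesssim_{P,\epsilon} q^{-1/d+\epsilon}$. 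Hence (i) holds with any $\delta_d<1/d$, for instance $\delta_d=1/(2d)$.

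For (ii), both bounds are one-line estimates:
\begin{itemize}
\item the first follows from $|1-e^{-2\pi i\beta P(Ny)}|\le 2\pi|\beta||P(Ny)|\lesssim_P|\beta|N^d$ for $y\in[0,1]$;
\item for the second, the $d$-th derivative of the phase $y\mapsto\beta P(Ny)$ equals $\beta b_d d!N^d$, so van der Corput's lemma gives $|w_N(\beta)|\lesssim_d(|\beta|N^d)^{-1/d}$, and combining this with the trivial bound $|w_N|\le1$ gives $(1+|\beta|N^d)^{-1/d}$.
\end{itemize}

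For (iii), write $\beta=\alpha-\xi$ and split $n=qm+r$ with $0\le r<q$. Since $P(qm+r)\equiv P(r)\pmod q$, we have $e(-P(n)a/q)=e(-P(r)a/q)$, and so
\[
\widehat K_N(\alpha)=\frac1N\sum_{r<q}e(-P(r)a/q)\sum_{m}e(-\beta P(qm+r)).
\]
I would compare each inner sum with $\frac1q\int e(-\beta P(y))\,dy$ over the same range via summation by parts (or Euler--Maclaurin). The phase has derivative $\lesssim|\beta|N^{d-1}$, so the error per residue class is $O(1+q|\beta|N^{d-1})$, and the boundary terms are $O(q)$. Summing over $r$ and dividing by $N$ gives a total error of
\[
\lesssim \frac{q}{N}+q|\beta|N^{d-1}\le N^{-1+2\varepsilon_0}.
\]
This is $\lesssim N^{-1/2}$ once $\varepsilon_0\le1/4$.

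For (iv), the main bookkeeping step, I would proceed in the following order.
\begin{enumerate}
\item \emph{Disjointness of supports.} The cutoffs $\phi(10^s(\cdot-\xi))$ for distinct $\xi$ have disjoint supports. For $\xi,\xi'\in\mathfrak R_s\cup\mathfrak R_{s'}$ we have $|\xi-\xi'|\ge1/(qq')\ge 2^{-s-s'-2}$, whereas the supports have radii $10^{-s}/2$ and $10^{-s'}/2$. Hence each $s$ contributes at most one term at a given $\alpha$.
\item \emph{Large $s$.} The terms with $2^s\ge N^{\varepsilon_0/2}$ are summed using (i), which gives $\sum_{2^s\ge N^{\varepsilon_0/2}}2^{-s\delta_d}\lesssim N^{-c}$.
\item \emph{Major arcs.} Suppose $\alpha$ lies within $N^{-d+\varepsilon_0}$ of some reduced $\xi=a/q$ with $q<N^{\varepsilon_0/2}$. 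Then $\phi=1$ at $\alpha$ for that $\xi$, and (iii) matches its term with $\widehat K_N(\alpha)$ up to $N^{-1/2}$. For every other small-$s$ rational $\xi'$, the separation gives $|\alpha-\xi'|\gtrsim N^{-\varepsilon_0}$. By (ii), $|w_N(\alpha-\xi')|\lesssim N^{-1+\varepsilon_0/d}$, and there are only $O(\log N)$ such values of $s$.
\item \emph{Minor arcs.} Otherwise, Dirichlet's theorem combined with Weyl's inequality gives $|\widehat K_N(\alpha)|\lesssim N^{-c}$. Each small-$s$ term is also $\lesssim(N^{\varepsilon_0})^{-1/d}$ by (ii), since $|\alpha-\xi|\ge N^{-d+\varepsilon_0}$.
\end{enumerate}

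The expected obstacle is entirely in (iv). One must choose $\varepsilon_0$ and the threshold for $s$ consistently so that three things hold at once: the Weyl exponent on the minor arcs, the $w_N$-decay on the intermediate region $N^{-d+\varepsilon_0}\le|\beta|\le10^{-s}$, and the $2^{-s\delta_d}$ tail all produce a common power saving $N^{-c_0}$. I would first fix $\varepsilon_0$ small in terms of the Weyl exponent for degree $d$, and then take $c_0$ to be the minimum of the resulting exponents.
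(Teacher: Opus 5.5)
The paper gives no argument of its own for Proposition~\ref{Bo89-es1}; it simply quotes these scalar facts from \cite[Section~6]{Bo89}. Your proposal is a self-contained reconstruction of the classical proof behind that citation, and it is correct in substance. Its ingredients are:
\begin{itemize}
\item for (i), Hua's bound via multiplicativity, with Weil's bound for primes and the prime-power reduction;
\item for (ii), the trivial bound and van der Corput;
\item for (iii), splitting into residue classes modulo $q$ plus partial summation;
\item for (iv), a major/minor arc split with Dirichlet and Weyl.
\end{itemize}
The citation buys brevity. Your route makes the proposition independent of \cite{Bo89} and produces explicit admissible values of $\delta_d$, $\varepsilon_0$ and $c_0$.

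A few details should be repaired, none of them serious.
\begin{itemize}
\item In (iv), cutoffs from \emph{different} layers are not disjoint. For example, the $s=0$ cutoff $\phi(\alpha)$ equals $1$ on $|\alpha|\le1/10$, which contains the centers $1/q'$ of every layer with $q'\ge10$. Only within-layer disjointness holds, and only that is needed. Your inequality gives it only for $s\ge2$, so check $\mathfrak R_0=\{0\}$ and $\mathfrak R_1=\{1/3,1/2,2/3\}$ by hand (minimal gap $1/6>1/10$).
\item In (iii), the error per residue class is $O(1+|\beta|N^d)$, not $O(1+q|\beta|N^{d-1})$: there are about $N/q$ terms, each within $O(q|\beta|N^{d-1})$ of its local average. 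Your final total $q/N+q|\beta|N^{d-1}$ is nevertheless correct.
\item On the minor arcs, Weyl's inequality must be applied to a rational approximation of the leading coefficient $b_d\alpha$, not of $\alpha$. Replacing $a/q$ by $b_da/q$ shrinks the reduced denominator by at most the factor $b_d$. For $d=1$, use the geometric sum directly.
\item Your ``small $s$'' condition $2^s<N^{\varepsilon_0/2}$ admits denominators up to $2N^{\varepsilon_0/2}$, while your major arcs use $q<N^{\varepsilon_0/2}$. So in the minor-arc case, a small-layer center may still lie within $N^{-d+\varepsilon_0}$ of $\alpha$, and (ii) does not bound its term. Either bound it by (i) instead, which gives a coefficient $\lesssim N^{-\varepsilon_0\delta_d/2}$, or align the thresholds.
\end{itemize}
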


We now prove Lemma~\ref{L_2:Lem}. It suffices to prove the estimate
with $L_{k_1}-L_{k_0}$ in place of
$\widetilde L_{k_1}-\widetilde L_{k_0}$, since reflection preserves
all the multiplier bounds and support properties used below. That is,
for pairwise distinct $k_1,\ldots,k_q\in\mathcal I_i$ satisfying
\[
 k_1=\min\{k_1,\ldots,k_q\},
\]
we prove
\begin{equation}\label{Claim1}
 \left\|
  (a_{k_2}\cdots a_{k_q})*(L_{k_1}-L_{k_0})
 \right\|_2
 \lesssim_{q,P} k_0^{-q}.
\end{equation}

\begin{proof}
For $1\leq r\leq q$, put
\[
 N_r:=2^{k_r},
 \qquad
 N_0:=2^{k_0}.
\]
Let $s_r$ be positive integers to be chosen below, and let $D\geq1$.
Define
\begin{equation}\label{eq:Omega-r}
 \Omega_r(\alpha)
 :=
 \sum_{0\leq s\leq s_r}
 \sum_{\xi\in\mathfrak R_s}
 S(\xi)w_{N_r}(\alpha-\xi)
 \phi(10^s(\alpha-\xi))
 \phi(D^{-1}N_r^d(\alpha-\xi)).
\end{equation}

Write $\Omega_r=\sum_{0\leq s\leq s_r}\Omega_{r,s}$. Then
\[
 \widehat L_{k_r}-\Omega_r
 =
 \left(\widehat L_{k_r}-\sum_{s\geq0}\widehat L_{s,N_r}\right)
 +\sum_{0\leq s\leq s_r}
   \bigl(\widehat L_{s,N_r}-\Omega_{r,s}\bigr)
 +\sum_{s>s_r}\widehat L_{s,N_r}.
\]
By Proposition \ref{Bo89-es1}, the first term is bounded by
$CN_r^{-c_0}$. On the support of the cutoff difference, we must have
$N_r^d|\alpha-\xi|\gtrsim D$, so \eqref{eq:wdec-v2} and the
disjointness of the major arcs in each layer give
\[
 \|\widehat L_{s,N_r}-\Omega_{r,s}\|_\infty
 \lesssim_P D^{-1/d}.
\]
Moreover, \eqref{eq:Sdec-v2} gives
\[
 \|\widehat L_{s,N_r}\|_\infty\lesssim_P2^{-s\delta_d}.
\]
Consequently,
\begin{align*}
 \|\widehat L_{k_r}-\Omega_r\|_{L_\infty(\mathbb T)}
 &\lesssim_P
 N_r^{-c_0}
 +\sum_{s\leq s_r}D^{-1/d}
 +\sum_{s>s_r}2^{-s\delta_d} \notag\\
 &\lesssim_P
 N_r^{-c_0}
 +s_rD^{-1/d}
 +2^{-s_r\delta_d}.
\end{align*}
Moreover, one has
\begin{equation*}
 \|\mathcal F^{-1}(\Omega_r)\|_{\ell_1}
 \lesssim_P4^{s_r};
\end{equation*}
see again \cite[Section 6]{Bo89}. Consequently, by Young's
inequality,
\begin{equation}\label{Claim1-es1}
 \| (\Omega_r\widehat g_{k_r})^\vee\|_\infty
 \leq
 \|g_{k_r}\|_\infty
 \|\mathcal F^{-1}(\Omega_r)\|_{\ell_1}
 \lesssim_P4^{s_r}.
\end{equation}
Moreover, by the Plancherel theorem and
$\|g_{k_r}\|_2\leq1$,
\begin{align}
 \|g_{k_r}*L_{k_r}-(\Omega_r\widehat g_{k_r})^\vee\|_2
 &\leq
 \|\widehat L_{k_r}-\Omega_r\|_\infty
 \|g_{k_r}\|_2
 \notag\\
 &\lesssim_P
 N_r^{-c_0}
 +s_rD^{-1/d}
 +2^{-s_r\delta_d}.
 \label{Claim1-es2}
\end{align}

Put
\[
 B_r:=(\Omega_r\widehat g_{k_r})^\vee,
 \qquad
 H:=L_{k_1}-L_{k_0}.
\]

Here is the place where non commutativity is used. We have a telescoping
identity
\begin{align}\label{telesc}
 a_{k_2}\cdots a_{k_q}
 ={}&
 \sum_{r=2}^q
 X_2^{(r)}\cdots X_{r-1}^{(r)}
 (a_{k_r}-B_r)
 X_{r+1}^{(r)}\cdots X_{q}^{(r)}
 +B_2\cdots B_q
\end{align}
where $X_l^{(r)}=a_{k_l}$ if $k_l>k_r$ and $X_l^{(r)}=B_l$ if
$k_l<k_r$. Indeed, if $k_{\sigma(2)}<\cdots<k_{\sigma(q)}$ for some
permutation $\sigma$, one starts by replacing $a_{k_{\sigma(2)}}$ by
$B_{\sigma(2)}$, then $a_{k_{\sigma(3)}}$ by
$B_{\sigma(3)}$ in the difference and so on. Since by Young's inequality,
$\|H\|_{\ell_1}\leq2$, and
$\|a_{k_j}\|_\infty\leq1$
\begin{align*}
&
 \left\|
 \bigl[
 X_2^{(r)}\cdots X_{r-1}^{(r)}
 (a_{k_r}-B_r)
 X_{r+1}^{(r)}\cdots X_{q}^{(r)}
 \bigr]*H
 \right\|_2
\\
&\quad\leq
 \|H\|_{\ell_1}
 \left(\prod_{k_j<k_r}\|B_{j}\|_\infty\right)
 \|a_{k_r}-B_r\|_2
 \left(\prod_{k_j>k_r}\|a_{k_j}\|_\infty\right)
\\
&\quad\leq
 2\left(\prod_{k_j<k_r}\|B_j\|_\infty\right)
 \|a_{k_r}-B_r\|_2.
\end{align*}
Therefore,
combining \eqref{Claim1-es1} and \eqref{Claim1-es2}, we obtain
\begin{align*}
&\left\|
 (a_{k_2}\cdots a_{k_q})*(L_{k_1}-L_{k_0})
 \right\|_2
\notag\\
&\quad\lesssim_P
 \sum_{r=2}^q
 4^{s_2+\cdots+s_{r-1}}
 \left(
 N_r^{-c_0}
 +s_rD^{-1/d}
 +2^{-s_r\delta_d}
 \right)
 +
 \left\|
 (B_2\cdots B_q)*(L_{k_1}-L_{k_0})
 \right\|_2.
\end{align*}

Choose constants $c_2,\ldots,c_q>0$ successively, depending only on
$q$ and $P$, sufficiently rapidly increasing, such that if we put
\[
 s_r=\lceil c_r\log k_0\rceil,
\]
then  for every $2\leq r\leq q$,
\[
 4^{s_2+\cdots+s_{r-1}}2^{-s_r\delta_d}
 \leq k_0^{-q-1}.
\]
It follows, for some $c=c(q,P)>0$, that
\begin{align*}
&\sum_{r=2}^q
 4^{s_2+\cdots+s_{r-1}}
 \left(
  N_r^{-c_0}
  +s_rD^{-1/d}
  +2^{-s_r\delta_d}
 \right)
\notag
&\lesssim_{q,P}
 k_0^{-q}
 +k_0^c\left(N_2^{-c_0}+D^{-1/d}\right).
\end{align*}

Let $\Theta$ denote the support of the Fourier transform of
$B_2\cdots B_q$. From \eqref{eq:Omega-r}, $\Theta$ is contained in a
$C_qDN_2^{-d}$-neighborhood of rational points with denominators at
most $2^{C_qs_q}$. Since $s_q\simeq_{q,P}\log k_0$, for all
sufficiently large $k_0$ we have
\[
 2^{C_qs_q}<N_0^\delta
\]
for a suitably small $\delta>0$. With the polynomial choice of $D$
made below, choose the implicit constant in $M\simeq\log k_0$
sufficiently large so that
\[
 C_qDN_2^{-d}
 <
 \min\{N_1^{-d+\delta},N_0^{-d+\delta}\},
\]
so Proposition \ref{Bo89-es1}(iii) applies to both
$\widehat L_{k_1}$ and $\widehat L_{k_0}$ on $\Theta$. Using also
\eqref{eq:wdec-v2}, we obtain
\begin{align*}
 \sup_{\alpha\in\Theta}
 |\widehat L_{k_1}(\alpha)-\widehat L_{k_0}(\alpha)|
 &\lesssim_P
 D\left(\frac{N_1}{N_2}\right)^d
 +D\left(\frac{N_0}{N_2}\right)^d
 +N_0^{-1/2}
 \notag\\
 &\lesssim_P
 D\left(\frac{N_1}{N_2}\right)^d
 +N_0^{-1/2}.
\end{align*}
Since the indices lie in the same residue class modulo $M$,
\eqref{eq:M-separation} gives
\[
 \frac{N_1}{N_2}\leq2^{-M}.
\]
Furthermore, \eqref{Claim1-es1} and the normalized trace give
\[
 \|B_2\cdots B_q\|_2
 \leq
 \prod_{r=2}^q\|B_r\|_\infty
 \lesssim_P4^{s_2+\cdots+s_q}.
\]
Therefore,
\begin{align*}
&\left\|
 (a_{k_2}\cdots a_{k_q})*(L_{k_1}-L_{k_0})
 \right\|_2
\notag\\
&\quad\lesssim_{q,P}
 k_0^{-q}
 +k_0^c\left(2^{-c_0k_0}+D^{-1/d}\right)
 +k_0^c\left(D2^{-Md}+2^{-k_0/2}\right).
\end{align*}
Choose
\[
 D=k_0^{100d(q+c)}
\]
and then choose the implicit constant in $M\simeq\log k_0$
sufficiently large. The last display is bounded by
$C_{q,P}k_0^{-q}$, which proves \eqref{Claim1}.
\end{proof}

\section{Maximal estimates for the major-arc approximations}
Throughout this section, $\mathcal M$ denotes an arbitrary semifinite
von Neumann algebra.  In accordance with the notation introduced in
Section~\ref{pre}, we write
\[
 \mathcal A_{\mathbb R}(\mathcal M)
 =L_\infty(\mathbb R)\overline\otimes\mathcal M,
 \qquad
 \mathcal A_{\mathbb Z}(\mathcal M)
 =\ell_\infty(\mathbb Z)\overline\otimes\mathcal M.
\]

The purpose of this section is to establish the maximal and
approximation estimates for the major-arc multipliers used in the
proof of the main theorem. In
Subsection~\ref{subsec:Lp-approximating}, the construction of
$M_{s,k}$ and the general strategy follow Bourgain
\cite[(7.57)--(7.59)]{Bo89}. The corresponding operator-valued
$L_p$-maximal estimate is obtained by combining this construction
with the noncommutative sampling principle
\cite[Lemma~3.5]{CHW24}.

The main new part of this section is
Subsection~\ref{subsec:L2-approximating}. Bourgain proves the scalar
counterparts of \eqref{eq:error-decay-v2} and
\eqref{eq:difference-decay-v2} by using the multi-frequency maximal
inequality \cite[Lemma~4.1]{Bo89}, whose pointwise metric-entropy
argument does not transfer directly to
$L_2(\Aalg;\ell_\infty)$. We instead combine the decomposition from
\cite[Sections~3 and~6]{Bo881} with the construction of $M_{s,k}$
from \cite[Section~7]{Bo89}. The new ingredient is Lemma~\ref{lem:comparison-v2}, which shows that
$L_{2^k}^{(\leq2^s)}$ approximates $\widehat M_{s,k}$ in the
operator-valued maximal $L_2$-norm with decay in $s$. Together with
Lemma~\ref{lem:layer-v2}, obtained through the noncommutative sampling
principle, this yields \eqref{eq:error-decay-v2} and
\eqref{eq:difference-decay-v2} without using
\cite[Lemma~4.1]{Bo89}.

\subsection{\texorpdfstring{$L_p$}{Lp}-estimates for the approximating kernels}
\label{subsec:Lp-approximating}
For $s\geq1$, set
\[
 Q_s=(2^s)!.
\]
Following Bourgain's construction
\cite[(7.57)--(7.59)]{Bo89}, for $k\geq4^s$ define the approximating
kernel $M_{s,k}$ on $\mathbb T\simeq [0,1)$ by
\begin{equation}\label{M_k}
 \widehat M_{s,k}(\theta)
 =
 \sum_{0\leq a<Q_s}
 S\left(\frac{a}{Q_s}\right)
 w_{2^k}\left(\theta-\frac{a}{Q_s}\right)
 \phi\left(
 Q_s^2\left(\theta-\frac{a}{Q_s}\right)
 \right).
\end{equation}

The main result of this subsection is the following uniform maximal
estimate.

\begin{lemma}\label{lem:M-p-v2}
For every $1<p<\infty$ and $s\geq1$,
\begin{equation*}
 \left\|
 \bigl(f*M_{s,k}\bigr)_{k\geq4^s}
 \right\|_{L_p(\Aalg;\ell_\infty)}
 \lesssim_{p,P}\|f\|_p.
\end{equation*}
\end{lemma}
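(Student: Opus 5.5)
We would follow Bourgain's factorization of the multiplier $\widehat M_{s,k}$ into an arithmetic part at the fixed modulus $Q_s$ and a continuous averaging part, and then transfer the continuous maximal estimate to the lattice via the noncommutative sampling principle \cite[Lemma~3.5]{CHW24}. Concretely, for $k\geq 4^s$ we have $2^{kd}\gg Q_s^2$. Hence the factor $\phi(Q_s^2(\theta-a/Q_s))$ is constant equal to one on the effective support of $w_{2^k}(\theta-a/Q_s)$, up to a tail. We would therefore write
\[
\widehat M_{s,k}(\theta)=\sum_{0\leq a<Q_s}S(a/Q_s)\,\Phi_k(\theta-a/Q_s)\,\phi(Q_s^2(\theta-a/Q_s)),
\]
where $\Phi_k(\beta)=w_{2^k}(\beta)\tilde\phi(Q_s^2\beta)$ and $\tilde\phi$ is a second cutoff with $\tilde\phi\phi=\phi$. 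This puts us exactly in the setting of the sampling principle: a $1/Q_s$-periodic multiplier $\sum_a S(a/Q_s)\,\delta$-type weights, times a continuous family of multipliers supported in $|\beta|\leq 1/(2Q_s)$.

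The steps, in order, are as follows.

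\emph{Step 1: the continuous estimate.} We prove the $\mathbb R$-analogue. The multiplier $w_{2^k}(\beta)$ is the Fourier transform of the average of $\delta_{P(y)}$ over $y\in[0,2^k]$, i.e.\ of an averaging operator along the monotone curve $y\mapsto P(y)$ (for large $y$). Its maximal operator on $L_p(\mathcal A_{\mathbb R}(\mathcal M);\ell_\infty)$ is dominated by the noncommutative Hardy--Littlewood maximal inequality after a change of variables. For small $y$ one uses a positive kernel with $L_1$ bound; for large $y$ one uses the derivative bound on $P$, which gives a positive kernel dominated by a sum of dyadic averages. Composing with the smooth cutoff $\tilde\phi(Q_s^2\cdot)$, i.e.\ convolution with an $L_1$-normalized kernel, preserves positivity and $L_p(\ell_\infty)$ bounds uniformly in $s$.

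\emph{Step 2: sampling.} We apply \cite[Lemma~3.5]{CHW24} with modulus $Q_s$. It states that the $L_p(\Aalg;\ell_\infty)$ norm of $f\mapsto(\sum_a \phi(Q_s^2(\cdot-a/Q_s))\Phi_k(\cdot-a/Q_s)\widehat f)^\vee$ is bounded by the continuous $L_p(\mathcal A_{\mathbb R};\ell_\infty)$ norm of $(\Phi_k)_k$, uniformly in $Q_s$. The support condition, $\Phi_k$ supported in $|\beta|\leq 1/(2Q_s)$, is guaranteed by the cutoffs.

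\emph{Step 3: the arithmetic factor.} It remains to insert the weights $S(a/Q_s)$. Note that $\sum_a S(a/Q_s)\,\cdot$ (evaluated on the periodic part) is the Fourier multiplier of the probability kernel $\frac1{Q_s}\sum_{r<Q_s}\delta_{P(r)}$ on $\mathbb Z_{Q_s}$ (with the sign convention of \eqref{eq:S-v2}). As a periodic operator, convolution with this kernel is a positive contraction commuting with everything. Thus $f*M_{s,k}=T_s(f*\widetilde\Phi_k)$ up to the $\phi$-localization, where $T_s$ is positive and contractive on $L_p$. Positivity gives $\|(T_s x_k)\|_{L_p(\ell_\infty)}\leq\|(x_k)\|_{L_p(\ell_\infty)}$, yielding a bound independent of $s$.

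The main obstacle is making Step 3 rigorous. The $S(a/Q_s)$ weights and the localizations $\phi(Q_s^2\cdot)$ must be decoupled exactly rather than approximately, since no decay in $s$ is available here to absorb errors. We would first try to realize $\sum_a S(a/Q_s)\phi(Q_s^2(\theta-a/Q_s))$ as $\widehat{\kappa}_s(\theta)\cdot\sum_a\phi(Q_s^2(\theta-a/Q_s))$, where $\kappa_s$ is the periodic kernel above, and verify that the periodic multiplier $\widehat\kappa_s$ equals $S(a/Q_s)$ at the points $a/Q_s$. If this exact identity fails because of non-reduced fractions, we would instead reproduce the form required by the \cite{CHW24} sampling lemma, which already accommodates such periodic weights.
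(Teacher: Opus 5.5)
Your Steps 1 and 2 match the paper's Lemmas~\ref{lem:continuous-v2} and~\ref{lem:cyclic-v2}, and the paper also uses your exact factorization with $\tilde\phi\phi=\phi$. The genuine gap is Step 3. You flag it but do not close it, and the mechanism you propose there cannot work.

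No positive contraction can realize the weights exactly. Suppose $T_s$ is convolution with a probability measure $\mu$ on $\mathbb Z$ and $\widehat\mu(\theta)\sum_a w_{2^k}(\theta-a/Q_s)\tilde\phi(Q_s^2(\theta-a/Q_s))=\widehat M_{s,k}(\theta)$. Near $\theta=0$ only the $a=0$ term is present, $S(0)=1$, $\phi=\tilde\phi=1$, and $w_{2^k}\neq0$ for $|\theta|\ll 2^{-kd}$. Hence $\widehat\mu\equiv1$ on a neighbourhood of $0$, which forces $\mu=\delta_0$. That would require $S(a/Q_s)=1$ for every $a$, which is false for large $s$ by \eqref{eq:Sdec-v2}. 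In particular, for $\kappa_s=\frac1{Q_s}\sum_{r<Q_s}\delta_{P(r)}$, $\widehat\kappa_s$ agrees with $S(a/Q_s)$ only at the points $a/Q_s$, not on their $Q_s^{-2}$-neighbourhoods. The obstruction is not non-reduced fractions: if $a/Q_s=a'/q$ then $\frac1{Q_s}\sum_{r<Q_s}e^{-2\pi iP(r)a/Q_s}=S(a'/q)$ exactly. Your fallback is also unavailable, since Lemma~\ref{lem:CHW-v2} periodizes a single profile $\phi_\lambda$ and carries no weights.

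The missing idea is that positivity is not needed. All these multipliers commute, so you can apply the weight operator to $f$ instead of to the family $(f*\widetilde\Phi_k)_k$. Then you only need an operator bounded on $L_p$ uniformly in $s$ whose symbol equals $S(a/Q_s)$ on $\supp\phi(Q_s^2(\cdot-a/Q_s))$. The paper (Lemma~\ref{lem:weighted-v2}) takes $b_{q,D}(\xi)=\sum_{a<q}S(a/q)\phi(D(\xi-a/q))$ with $q=Q_s$ and $D=Q_s^2$.

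\begin{itemize}
\item Orthogonality of characters gives the kernel $\check b_{q,D}(n)=D^{-1}\check\phi(n/D)\,r_q(n)$, where $r_q(n)=\#\{0\le r<q:P(r)\equiv n\pmod q\}$.
\item $r_q$ is $q$-periodic with $\sum_{j<q}r_q(j)=q$, and $D\ge q$. A Riemann-sum argument using the smoothness and decay of $\check\phi$ then gives $\|\check b_{q,D}\|_{\ell_1}\lesssim_\phi 1$ uniformly in $q$ and $D$, even though this kernel is not positive.
\item One then has exactly $\widehat M_{s,k}=b_{Q_s,Q_s^2}\cdot\sum_a w_{2^k}(\cdot-a/Q_s)\tilde\phi(Q_s^2(\cdot-a/Q_s))$.
\item So $(f*M_{s,k})_k$ is your Steps 1--2 family applied to the single function $B_{Q_s,Q_s^2}f$. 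Young's inequality $\|B_{Q_s,Q_s^2}f\|_p\lesssim\|f\|_p$ closes the argument.
\end{itemize}

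Similarly, in Step 1 the kernel of $\tilde\phi(Q_s^2\cdot)$ need not be positive; only its $L_1$ norm is used.
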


The proof of Lemma~\ref{lem:M-p-v2} follows Bourgain's strategy in
\cite[Section~7]{Bo89}. In the operator-valued setting, the required
sampling step is provided by the noncommutative sampling principle
from \cite[Lemma~3.5]{CHW24}. We first establish the auxiliary
lemmas needed for the proof.

We first recall the following noncommutative sampling principle. We denote by
$T_\psi$ the Fourier multiplier on $\R$ with symbol $\psi$ and
$\mathcal T_m$ the Fourier multiplier on $\mathbb Z$ with symbol $m:\mathbb T\to \mathbb C$

\begin{lemma}\label{lem:CHW-v2}\cite[Lemma 3.5]{CHW24}
Let $\{\phi_\lambda\}_{\lambda\in I}$ be a family of bounded scalar
functions on $\mathbb R$ such that
\[
 \supp\phi_\lambda\subset(-1/2,1/2)
 \qquad\text{for every }\lambda\in I.
\]
Suppose that, for every semifinite von Neumann algebra $\mathcal M$,
\begin{equation*}
 \left\|
 \bigl(T_{\phi_\lambda}g\bigr)_{\lambda\in I}
 \right\|_
 {L_p(\mathcal A_{\mathbb R}(\mathcal M);\ell_\infty)}
\lesssim_p \|g\|_{L_p(\mathcal A_{\mathbb R}(\mathcal M))}.
\end{equation*}
For $q\geq1$ and $\lambda\in I$, define a symbol on $\mathbb T$ by
\[
 \Phi_{\lambda,q}(\xi)
 :=
 \sum_{m\in\mathbb Z}\phi_\lambda(q\xi-m).
\]
Then
\begin{equation*}
 \left\|
 \left(
  \mathcal T_{\Phi_{\lambda,q}}(f)
 \right)_{\lambda\in I}
 \right\|_
 {L_p(\mathcal A_{\mathbb Z}(\mathcal M);\ell_\infty)}
 \lesssim_p
 \|f\|_{L_p(\mathcal A_{\mathbb Z}(\mathcal M))}.
\end{equation*}
\end{lemma}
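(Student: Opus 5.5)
The plan is to follow the sampling-principle argument of \cite{CHW24}: transfer the maximal estimate from $\mathbb R$ to $\mathbb Z$ by a periodization/restriction device, at the level of the operator-valued $L_p(\ell_\infty)$ norms.

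First reduce to a statement on $\mathbb R$. Since $\supp\phi_\lambda\subset(-1/2,1/2)$, the symbol $\Phi_{\lambda,q}$ on $\mathbb T$ is a disjoint union of the translated bumps $\phi_\lambda(q\xi-m)$. We use the factorization $\mathbb Z\simeq q\mathbb Z\times\mathbb Z_q$: write $f(n)=f(qj+r)$ with $0\le r<q$. For each residue $r$, the sequence $j\mapsto \mathcal T_{\Phi_{\lambda,q}}f(qj+r)$ is a Fourier multiplier in $j$ with symbol $\phi_\lambda$ (dilated by $q$, periodized) applied to the residue-$r$ subsequence. Concretely, $\Phi_{\lambda,q}(\xi)$ is $1/q$-periodic, so $\mathcal T_{\Phi_{\lambda,q}}$ commutes with the decomposition into residue classes mod $q$. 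It acts on each class $j\mapsto f(qj+r)$ as the $\mathbb Z$-multiplier with symbol $\eta\mapsto\sum_m\phi_\lambda(\eta-m)$. Since $L_p(\mathcal A_{\mathbb Z}(\mathcal M);\ell_\infty)$ splits as a direct sum over residue classes (apply the statement with $\mathcal M$ replaced by $\ell_\infty(\mathbb Z_q)\overline\otimes\mathcal M$), it suffices to treat $q=1$. This gives a reduction to the case where the constant does not depend on $q$.

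For $q=1$, carry out the standard $\mathbb R$-to-$\mathbb Z$ transference. Fix a Schwartz function $\chi$ with $\widehat\chi=1$ on $[-1/2,1/2]$ and $\supp\widehat\chi\subset(-1,1)$; this forces a small support adjustment, so one may first shrink $\phi_\lambda$'s support and use a slightly wider bump. Given $f$ on $\mathbb Z$, define $F(x)=\sum_n f(n)\chi(x-n)$ on $\mathbb R$, viewed in $\mathcal A_{\mathbb R}(\mathcal M)$. Then the following hold.
\begin{enumerate}
\item[(a)] $\|F\|_{L_p(\mathcal A_{\mathbb R}(\mathcal M))}\lesssim\|f\|_p$. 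This is a convolution with an $\ell_1$-summable operator-valued-safe kernel, and $\mathcal M$ only enters through the identity tensor.
\item[(b)] $T_{\phi_\lambda}F(x)=\sum_n (\mathcal T_{\Phi_{\lambda,1}}f)(n)\chi(x-n)$, since $\widehat\chi=1$ on $\supp\phi_\lambda$.
\end{enumerate}
Then recover the values at integers. Pick $\psi$ with $\widehat\psi$ compactly supported and $\widehat\psi=1$ near $[-1,1]$, so that $G(n+t)$ can be expressed by averaging in $t\in[0,1)$. For $G=T_{\phi_\lambda}F$ with frequency support in $(-1/2,1/2)$, the sampling identity gives $G(n)=\int\!G(n+t)\,\rho(t)$-type formulas via the reproducing kernel. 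The maximal norm is then controlled by embedding $\ell_\infty(\mathbb Z)\overline\otimes\mathcal M$ into $L_\infty(\mathbb R)\overline\otimes\mathcal M$ through the conditional-expectation/averaging map $E:G\mapsto(\int_0^1 G(n+t)\,dt)_n$. This map is a positive contraction on all $L_p$, hence bounded on $L_p(\ell_\infty)$ (positive maps preserve the order description). Apply it to $\widetilde\chi$-convolved versions: $\mathcal T_{\Phi_{\lambda,1}}f=E(T_{\phi_\lambda}F * \kappa)$ for a fixed kernel $\kappa$ with $\widehat\kappa\widehat{\mathbf 1_{[0,1]}}=1$ on $[-1/2,1/2]$.

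The main obstacle is this last step. We must write $\mathcal T_{\Phi_{\lambda,1}}f$ as a positive (or completely bounded, uniformly) operator applied to $T_{\phi_\lambda}F$, using an operation independent of $\lambda$. That operation must map $L_p(\mathcal A_{\mathbb R};\ell_\infty)$ boundedly to $L_p(\mathcal A_{\mathbb Z};\ell_\infty)$. Non-positive kernels such as $\kappa$ are not directly allowed, so I would handle them by writing $\kappa$ as a difference of positive integrable pieces, or better by factoring through the hypothesis once more: replace $\phi_\lambda$ by $\phi_\lambda\cdot\widehat\kappa^{-1}$-type symbols is not available, so instead use the $\ell_1$-sum decomposition $\kappa=\sum_j c_j\,\mathbf 1_{[j,j+1)}$ with $\sum|c_j|<\infty$ (rapid decay). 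Each piece is a translate of the positive averaging $E$, and translations are isometries of $L_p(\ell_\infty)$. Summing with the triangle inequality yields the bound $\lesssim_p\|F\|_p\lesssim\|f\|_p$.
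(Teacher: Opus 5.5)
The paper does not prove this lemma; it quotes it from \cite{CHW24}, so there is no internal argument to compare yours with. Your route is the classical Magyar--Stein--Wainger sampling argument: reduce to $q=1$, extend $f$ to a band-limited function on $\mathbb R$, apply the hypothesis, and sample back at the integers. Your reduction to $q=1$ is correct and gives a $q$-independent constant. Since $\Phi_{\lambda,q}$ is $1/q$-periodic, its kernel is supported on $q\mathbb Z$. Under the trace-preserving isomorphism $\ell_\infty(\mathbb Z)\overline\otimes\mathcal M\cong\ell_\infty(\mathbb Z)\overline\otimes(\ell_\infty(\mathbb Z_q)\overline\otimes\mathcal M)$, $f\mapsto(f(qj+r))_{j,r}$, the operator $\mathcal T_{\Phi_{\lambda,q}}$ becomes $\mathcal T_{\Phi_{\lambda,1}}\otimes\id$.

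The $q=1$ step, however, rests on two incorrect claims.

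\emph{Claim (b) is false.} The right-hand side has Fourier transform $\Phi_{\lambda,1}\widehat f\,\widehat\chi$. Since $\widehat\chi$ is smooth and equal to $1$ on $[-1/2,1/2]$, it is nonzero slightly beyond $|\xi|=1/2$, where the translates $\phi_\lambda(\xi\mp1)$ can contribute. The ``support adjustment'' you invoke cannot be made uniformly in $\lambda$ when you only know $\supp\phi_\lambda\subset(-1/2,1/2)$. You also cannot shrink $\phi_\lambda$, because the hypothesis concerns the given family. Moreover, (b) does not give the integer values anyway, since $\chi(k)\neq\delta_{k,0}$ in general. What you need, and what holds with no adjustment, is the sampling identity
\[
 T_{\phi_\lambda}F(n)=\int_{\mathbb R}\phi_\lambda(\xi)\widehat f(\xi)e^{2\pi i n\xi}\,d\xi=\int_{\mathbb T}\Phi_{\lambda,1}(\xi)\widehat f(\xi)e^{2\pi i n\xi}\,d\xi=\mathcal T_{\Phi_{\lambda,1}}f(n).
\]
It follows from $\widehat{T_{\phi_\lambda}F}=\phi_\lambda\widehat\chi\widehat f=\phi_\lambda\widehat f$ together with the $1$-periodicity of $\widehat f$.

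\emph{The decomposition of $\kappa$ is false, and the obstacle it addresses does not exist.} A Schwartz function is not of the form $\sum_jc_j\mathbf 1_{[j,j+1)}$. But you do not need such a decomposition. Convolution by any scalar $\kappa\in L_1(\mathbb R)$ is a combination of four positive maps, each bounded on every $L_p(\mathcal A_{\mathbb R}(\mathcal M))$ as an average of translations. Each is therefore bounded on $L_p(\mathcal A_{\mathbb R}(\mathcal M);\ell_\infty)$ by the order description.

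You can in fact drop $E$ and $\kappa$ altogether. Take a Schwartz function $\psi$ with $\widehat\psi=1$ on $[-1/2,1/2]$, so that $T_{\phi_\lambda}F(n)=\int T_{\phi_\lambda}F(y)\psi(n-y)\,dy$. The map $G\mapsto\bigl(\int G(y)\psi(n-y)\,dy\bigr)_n$ splits into four positive maps. Each is bounded $L_1\to L_1$ by $\sup_y\sum_n|\psi(n-y)|$ and $L_\infty\to L_\infty$ by $\|\psi\|_1$, hence on $L_p$ by interpolation. It is therefore bounded from $L_p(\mathcal A_{\mathbb R}(\mathcal M);\ell_\infty)$ to $L_p(\mathcal A_{\mathbb Z}(\mathcal M);\ell_\infty)$. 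Your $E(\,\cdot*\kappa)$ is exactly this map with $\psi=\kappa*\mathbf 1_{[-1,0]}$. With these two repairs, (a) and the hypothesis complete the proof.
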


\begin{lemma}\label{lem:continuous-v2}
For $p>1$, we have the following inequality,
\begin{equation*}
  \big\|\big(T_{w_N}(f)\big)_{N\in\mathbb N}\big\|_
  {L_p(\mathcal A_{\mathbb R}(\mathcal M);\ell_\infty)}
  \lesssim_{p,P}\|f\|_{L_p(\mathcal A_{\mathbb R}(\mathcal M))},
\end{equation*}
where $w_N$ is defined in the equation \eqref{w_N} and ${\mathcal{M}}$ is an arbitrary semifinite von Neumann algebra.
\end{lemma}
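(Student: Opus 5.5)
We have to prove the maximal inequality for the continuous averages $T_{w_N}f(x)=\frac1N\int_0^N f(x-P(y))\,dy$, up to the sign convention of the Fourier transform. The plan is to compare $T_{w_N}$ with a smooth dyadic-type average whose maximal function is controlled by noncommutative martingale or Hardy--Littlewood theory. There are two reductions: restrict to dyadic $N=2^k$, then compare with heat/Poisson-type approximations.

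\emph{Step 1: positivity and reduction to dyadic scales.} Since $T_{w_N}$ is convolution with the positive measure $\mu_N$, the push-forward of $\frac1N\mathbf 1_{[0,N]}(y)\,dy$ under $y\mapsto P(y)$, we may take $f\geq0$. For $2^k\le N<2^{k+1}$ we have $N\mu_N\le 2^{k+1}\mu_{2^{k+1}}$ as measures, so $0\le T_{w_N}f\le 2\,T_{w_{2^{k+1}}}f$. Hence it suffices to bound $\|(T_{w_{2^k}}f)_{k\ge0}\|_{L_p(\mathcal A_{\mathbb R}(\mathcal M);\ell_\infty)}$. For small $N$, where the polynomial is not yet dominated by its leading term, the measure $\mu_N$ has total mass $1$ and a bounded density on an interval of length $\simeq_P N^d$ (or $\simeq 1$). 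Away from the finitely many critical points of $P$, the density of $\mu_N$ is bounded by $C_P\,N^{-1}|P'(y)|^{-1}$, which gives the comparison used in Step 2.

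\emph{Step 2: domination by averages over intervals.} Split $[0,N]=[0,R]\cup[R,N]$, where $R=R(P)$ is chosen so that $P$ is monotone with $P'(y)\simeq b_d y^{d-1}$ on $[R,\infty)$. The piece from $[0,R]$ contributes $\frac{R}{N}\,\nu$, where $\nu$ is a fixed finite measure (the image of Lebesgue measure on $[0,R]$). Its maximal function is dominated by a single fixed operator $f*\nu$, which is bounded in $L_p$. On $[R,N]$, substitute $t=P(y)$. The density is $\frac1N (P^{-1})'(t)\simeq \frac1N t^{1/d-1}$ on $[P(R),P(N)]$. Decomposing dyadically in $t$, this density is bounded by
\[
\sum_{j\le dk+C} \frac{2^{j/d}}{N}\,2^{-j}\mathbf 1_{[0,2^{j}]}(t)\lesssim \sum_{j} 2^{(j-dk)/d}\,\mathcal B_{2^j}(t),
\]
where $\mathcal B_{r}=r^{-1}\mathbf 1_{[0,r]}$. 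Positivity then gives
\[
0\le T_{w_{2^k}}f\lesssim \sum_{m\ge -C}2^{-m/d}\,f*\mathcal B_{2^{dk-m}}+\text{(fixed term)}.
\]

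\emph{Step 3: conclusion.} For each fixed $m$, the family $(f*\mathcal B_{2^{dk-m}})_k$ is dominated in $L_p(\ell_\infty)$ by the noncommutative Hardy--Littlewood maximal inequality on $\mathbb R$, i.e.\ one-sided interval averages. This follows from Junge--Xu / Mei via the dyadic martingale filtrations and Doob's inequality (the paper's Lemma \ref{lem:multi-doob} framework); its constant is $C_p$, uniform in $m$. Since $L_p(\mathcal M;\ell_\infty)$ is normed and its order description handles positive domination, summing the geometric factors $2^{-m/d}$ gives the bound $C_{p,P}\|f\|_p$.

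The main obstacle is Step 2: making the operator domination rigorous. This requires a pointwise inequality between positive kernels, which transfers to an operator order inequality after convolving with $f\ge0$ (convolution by a positive measure is a positive map). It also requires that countable positive sums behave well in $L_p(\ell_\infty)$: if $0\le x_k\le\sum_m c_m y_{m,k}$ and $y_{m,k}\le a_m$, then $x_k\le\sum_m c_m a_m$. Both points are fine here, so the argument reduces to the known one-dimensional noncommutative Hardy--Littlewood inequality.
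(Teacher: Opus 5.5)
Your proof is correct and follows the same route as the paper: reduce to $f\ge0$, split off the fixed initial piece $[0,R]$, substitute $t=P(y)$, and dominate the remaining kernel by the dilated power kernel $N^{-d}(t/N^d)^{-1+1/d}\mathbf 1_{[0,CN^d]}(t)$. The only difference is that the paper applies \cite[Lemma~4.1]{CHW24} to this kernel directly, whereas you expand it into a geometric sum of one-sided interval averages and control those with Mei's noncommutative Hardy--Littlewood maximal inequality. Two side remarks: your dyadic reduction in Step~1 is harmless but unnecessary, and the ``bounded density'' claim there is false near critical points of $P$ (for example $P(y)=y^2$), though you never use it.
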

\begin{proof}
  Without loss of generality, we may assume that $f\in \mathcal{S}(\mathbb{R})_+\otimes\mathcal{S}({\mathcal{M}})_+$. Then
  \begin{align*}
  T_{w_N}(f)(x)&=\int_{0}^1f(x-P(Ny))dy=\frac{1}{N}\int_{0}^Nf(x-P(y))dy.
  \end{align*}
 Recall that $P(t)=\sum_{i=1}^d b_i t^i$ with $b_d>0$.
Choose $C_P>0$ sufficiently large so that $P$ is strictly
increasing on $[C_P,\infty)$ and, for every $t\geq C_P$,
\[
 \frac12 b_dt^d\leq P(t)\leq 2b_dt^d.
\]
We also have, for every
$z\geq P(C_P)$,
\begin{equation*}
 \frac12 db_d^{1/d}z^{1-1/d}
 \leq P'\bigl(P^{-1}(z)\bigr)
 \leq 2db_d^{1/d}z^{1-1/d}.
\end{equation*}
  We set $N_P=[C_P]+1$, then we have
  \begin{align}\label{HLmax-es1}
\nonumber&\big\|\big(T_{w_N}(f)\big)_{N\in\mathbb N}\big\|_
{L_p(\mathcal A_{\mathbb R}(\mathcal M);\ell_\infty)}
\\&\lesssim \sum_{n=1}^{N_P}
\big\|T_{w_n}(f)\big\|_{L_p(\mathcal A_{\mathbb R}(\mathcal M))}
+\big\|\big(T_{w_N}(f)\big)_{N>N_P}\big\|_
{L_p(\mathcal A_{\mathbb R}(\mathcal M);\ell_\infty)}.
  \end{align}
On the one hand, for a fixed $n$, we have
\begin{equation}\label{HLmax-es2}
    \big\|T_{w_n}(f)\big\|_{L_p(\mathcal A_{\mathbb R}(\mathcal M))}
    \leq \|f\|_{L_p(\mathcal A_{\mathbb R}(\mathcal M))}.
\end{equation}
On the other hand, for $N>N_P$, we have
\begin{align*}
   T_{w_N}(f)(x)&=\frac{1}{N}\int_{0}^Nf(x-P(y))dy\\
   &\leq T_{w_{N_p}}(f)(x)+\frac{1}{N}\int_{N_P}^Nf(x-P(y))dy\\
   &=T_{w_{N_p}}(f)(x)+\frac{1}{N}\int_{P(N_P)}^{P(N)}\frac{f(x-y)}{P^{\prime}(P^{-1}(y))}dy\\
   &\lesssim_P (w_{N_P}\hat{f})^{\vee}(x)+\frac{1}{N^d}\int_{0}^{2b_dN^d}{f(x-y)}\Big({\frac{y}{N^d}\Big)}^{-1+\frac{1}{d}}dy.
  \end{align*}
  Therefore,  combining estimate \eqref{HLmax-es2} and \cite[Lemma 4.1]{CHW24}, we get
  \begin{equation}\label{HLmax-es3}
    \big\|\big(T_{w_N}(f)\big)_{N>N_P}\big\|_
    {L_p(\mathcal A_{\mathbb R}(\mathcal M);\ell_\infty)}
    \lesssim_{p,P} \|f\|_{L_p(\mathcal A_{\mathbb R}(\mathcal M))}.
  \end{equation}
We conclude by gathering estimates \eqref{HLmax-es1}, \eqref{HLmax-es2} and \eqref{HLmax-es3}.
\end{proof}

\begin{lemma}\label{lem:cyclic-v2}
Let $\varphi$ be a fixed smooth function with support in $(-1/2,1/2)$, and let
$1\le q\leq D$.  Then
\begin{align}
\left\|
\left(\mathcal T_{
\sum_{a=0}^{q-1}w_N(.-a/q)
\varphi(D(.-a/q))} f
\right)_N
\right\|_{L_p(\Aalg;\ell_\infty)}
\lesssim_{p,P}\|f\|_p.                               \label{eq:cyclic-v2}
\end{align}
\end{lemma}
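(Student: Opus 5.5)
The plan is to reduce Lemma~\ref{lem:cyclic-v2} to the noncommutative sampling principle, Lemma~\ref{lem:CHW-v2}, with the continuous maximal estimate of Lemma~\ref{lem:continuous-v2} as input.

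\textbf{Step 1: rewrite the symbol.} Since $\varphi(D(\cdot-a/q))$ is supported in $|\alpha-a/q|<1/(2D)\le 1/(2q)$, the summands have disjoint supports modulo $1$. Writing $\xi=\alpha$ and $m=a$, we have $D(\alpha-a/q)=(D/q)(q\alpha-a)$. Hence the symbol equals
\[
 \Phi_{N,q}(\alpha)=\sum_{m\in\mathbb Z}\phi_N(q\alpha-m),
 \qquad
 \phi_N(\eta):=w_N(\eta/q)\,\varphi\bigl((D/q)\eta\bigr).
\]
Summing over $m\in\mathbb Z$ instead of $0\le a<q$ merely accounts for the periodization on $\mathbb T$. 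Because $D/q\ge1$, each $\phi_N$ is supported in $(-1/2,1/2)$, so Lemma~\ref{lem:CHW-v2} applies once we prove
\[
 \bigl\|(T_{\phi_N}g)_N\bigr\|_{L_p(\mathcal A_{\mathbb R}(\mathcal M);\ell_\infty)}\lesssim_{p,P}\|g\|_p
\]
uniformly in $q$ and $D$.

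\textbf{Step 2: factor the real-line multiplier.} We have $T_{\phi_N}=T_{w_N(\cdot/q)}\circ T_{\varphi(\cdot D/q)}$. The second operator is convolution with an $L_1$-dilate of $\check\varphi$, which is bounded on $L_p(\mathcal A_{\mathbb R}(\mathcal M))$ with norm at most $\|\check\varphi\|_{L_1}$, independently of the scale. The first operator is the multiplier $w_N$ conjugated by the dilation $x\mapsto qx$; dilations act isometrically, up to the normalization $q^{1/p}$ which cancels, on both $L_p$ and $L_p(\,\cdot\,;\ell_\infty)$, since they are trace-scaling automorphisms. Applying Lemma~\ref{lem:continuous-v2} to $h:=T_{\varphi(\cdot D/q)}g$ therefore gives
\[
 \bigl\|(T_{w_N(\cdot/q)}h)_N\bigr\|_{L_p(\ell_\infty)}\lesssim_{p,P}\|h\|_p\lesssim\|g\|_p .
\]
Composing the maximal bound with a fixed bounded operator applied first is harmless.

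\textbf{Step 3: conclude.} Lemma~\ref{lem:CHW-v2} now yields the inequality of Lemma~\ref{lem:cyclic-v2}, with constants depending only on $p$, $P$, and $\varphi$.

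\textbf{Main obstacle.} The delicate point is the bookkeeping in Step~1: checking that the periodized symbol matches the given finite sum on $\mathbb T$ exactly, including the disjointness and support requirement that needs $q\le D$. Lemma~\ref{lem:CHW-v2} requires support in the open interval $(-1/2,1/2)$, and this holds because $\supp\varphi\subset(-1/2,1/2)$. One must also confirm that the dilation invariance of $L_p(\mathcal A_{\mathbb R}(\mathcal M);\ell_\infty)$ holds. It does, since the factorization $x_\lambda=ay_\lambda b$ is preserved by the automorphism and the $L_{2p}$ normalizations multiply to the $L_p$ normalization.
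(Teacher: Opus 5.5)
Your proposal is correct and follows the paper's proof essentially step for step. It uses the same auxiliary symbol $w_N(\cdot/q)\varphi(D\cdot/q)$, supported in $(-1/2,1/2)$ because $q\le D$, and the same factorization into a dilate of $w_N$ (handled by Lemma~\ref{lem:continuous-v2}) composed with an $L_1$-bounded convolution. It also uses the same periodization via $m=a+\ell q$ and concludes with Lemma~\ref{lem:CHW-v2}; the only difference is that you spell out the dilation invariance of $L_p(\mathcal A_{\mathbb R}(\mathcal M);\ell_\infty)$, which the paper uses tacitly when applying Lemma~\ref{lem:continuous-v2} to $w_N(\cdot/q)$.
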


\begin{proof}
Set
\[
 m_N(\xi)
 =
 w_N(\xi/q)\varphi(D\xi/q).
\]
Since $q\leq D$, we have
\[
 \supp m_N\subset(-1/2,1/2).
\]
Lemma~\ref{lem:continuous-v2} gives $$\big\|\big(T_{w_N(./q)}(f)\big)_N \big\|_{L_p(\mathcal A_{\mathbb R}(\mathcal M);\ell_\infty)} \lesssim_{p,P} \|f\|_{L_p(\mathcal A_{\mathbb R}(\mathcal M))}.$$
By Young's inequality,
\[
 \left\|(T_{m_N}h)_N\right\|_
 {L_p(\mathcal A_{\mathbb R}(\mathcal M);\ell_\infty)}
 \lesssim_{p,P}
 \|T_ {\varphi(D./q)}h\|_p\leq  \|\hat{\varphi}\|_1 \|h\|_p.
\]
Writing $n=a+\ell q$, where $0\leq a<q$ and
$\ell\in\mathbb Z$, we obtain
\begin{align*}
 \sum_{n\in\mathbb Z}m_N(q\xi-n)
 &=
 \sum_{a=0}^{q-1}\sum_{\ell\in\mathbb Z}
 w_N\left(\xi-\frac aq-\ell\right)
 \varphi\left(
 D\left(\xi-\frac aq-\ell\right)
 \right)\\
 &=
 \sum_{a=0}^{q-1}
 w_N\left(\xi-\frac aq\right)
 \varphi\left(
 D\left(\xi-\frac aq\right)
 \right).
\end{align*}
In the last identity, the functions on the right-hand side are
understood as functions on $\mathbb T$. Indeed, since
$\varphi(D\,\cdot)$ is supported in
$(-1/(2D),1/(2D))$, for each $\xi\in\mathbb T$ and each
$0\leq a<q$, at most one integer $\ell$ contributes to the inner
sum. Thus the last expression is exactly the multiplier appearing
in \eqref{eq:cyclic-v2}. Applying
Lemma~\ref{lem:CHW-v2} proves the result.
\end{proof}

\begin{lemma}\label{lem:weighted-v2}
Assume $D\ge q$.  Then
\begin{align}
\left\|
\left(
\mathcal T_{\sum_{a=0}^{q-1}S(a/q)w_N(.-a/q)
\varphi(D(.-a/q))} f
\right)_N
\right\|_{L_p(\Aalg;\ell_\infty)}
\lesssim_{p,P}\|f\|_p.                               \label{eq:weighted-v2}
\end{align}
\end{lemma}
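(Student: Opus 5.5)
The plan is to reduce Lemma~\ref{lem:weighted-v2} to the unweighted Lemma~\ref{lem:cyclic-v2}. Write the weights $S(a/q)$ as a Fourier series in $a$. Here $a/q$ need not be reduced, so we read $S(a/q)=\frac1q\sum_{0\le r<q}e^{-2\pi i aP(r)/q}$, which is well defined for all $a$. This is a $q$-periodic function of $a$ and an average of characters:
\[
 S(a/q)=\frac1q\sum_{r=0}^{q-1}e^{-2\pi i P(r)a/q}.
\]
For each fixed $r$, the factor $e^{-2\pi iP(r)a/q}$ is the restriction to the point $\xi=a/q$ of the modulation $\xi\mapsto e^{-2\pi iP(r)\xi}$. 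The correction term $e^{2\pi i P(r)(\xi-a/q)}$ appears only on the support of $\varphi(D(\xi-a/q))$, where $|\xi-a/q|\le 1/(2D)$. To avoid this correction, I would instead use the translation structure directly, and argue as follows.

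For fixed $r$, multiplication of a symbol on $\mathbb T$ by $e^{-2\pi i P(r)\xi}$ corresponds to the translation operator $\tau_{P(r)}$ on $\mathbb Z$. This is a trace-preserving automorphism of $\Aalg$ commuting with all multipliers, so it preserves $L_p(\Aalg;\ell_\infty)$ norms. The symbol of the operator in the lemma equals
\[
\frac1q\sum_{r=0}^{q-1}\sum_{a=0}^{q-1} e^{-2\pi iP(r)a/q}\,w_N(\xi-a/q)\varphi(D(\xi-a/q)).
\]
On each bump we have $e^{-2\pi iP(r)a/q}=e^{-2\pi iP(r)\xi}\,e^{2\pi iP(r)(\xi-a/q)}$. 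So the $r$-th term is $\tau_{P(r)}$ composed with the multiplier $\sum_a w_N(\xi-a/q)\psi_r(D(\xi-a/q))$, where $\psi_r(\eta)=\varphi(\eta)e^{2\pi iP(r)\eta/D}$. This $\psi_r$ is again smooth and supported in $(-1/2,1/2)$, but it depends on $r$, and $P(r)/D$ may be large. This is the main obstacle: the implicit constant in Lemma~\ref{lem:cyclic-v2} enters through $\|\widehat{\psi_r}\|_1$ in its proof, and that norm grows with $P(r)/D$.

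To get around this, I would reduce $P(r)$ modulo $q$ first. Since $e^{-2\pi iP(r)a/q}$ depends only on $P(r)\bmod q$, choose the representative $t_r\in[0,q)$ with $t_r\equiv P(r)\pmod q$. Then $t_r/D\le q/D\le1$ because $D\ge q$. The corresponding $\psi_r(\eta)=\varphi(\eta)e^{2\pi i t_r\eta/D}$ therefore has $\|\widehat{\psi_r}\|_1\lesssim\|\varphi\|_{C^2}$, uniformly in $r$. Running the proof of Lemma~\ref{lem:cyclic-v2} with $\psi_r$ in place of $\varphi$ then gives a uniform bound. That proof uses only Lemma~\ref{lem:continuous-v2}, the Young inequality bound by $\|\widehat{\psi_r}\|_1$, and Lemma~\ref{lem:CHW-v2}.

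Finally, the operator in the lemma is the average $\frac1q\sum_r \tau_{t_r}\circ(\cdots)$. By the triangle inequality in the normed space $L_p(\Aalg;\ell_\infty)$ and the invariance under $\tau_{t_r}$, its maximal norm is at most $\frac1q\sum_r C_{p,P}\|f\|_p=C_{p,P}\|f\|_p$, which is \eqref{eq:weighted-v2}.
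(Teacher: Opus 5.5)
Your argument is correct, but it is organized differently from the paper's. The paper does not expand $S(a/q)$ inside the multiplier. Instead it factors the weighted symbol as $\bigl[\sum_a w_N(\xi-a/q)\widetilde\varphi(D(\xi-a/q))\bigr]\cdot b_{q,D}(\xi)$, where $b_{q,D}(\xi)=\sum_a S(a/q)\varphi(D(\xi-a/q))$ and $\widetilde\varphi=1$ on $\supp\varphi$. It then computes $\check b_{q,D}(n)=D^{-1}\check\varphi(n/D)\,r_q(n)$ with $r_q(n)=\#\{0\le r<q: P(r)\equiv n \pmod q\}$. From $r_q\ge0$ and $\sum_{j<q}r_q(j)=q$, a mean-value and summation argument gives $\|\check b_{q,D}\|_{\ell_1}\lesssim_\varphi 1$, and Lemma~\ref{lem:cyclic-v2} (with $\widetilde\varphi$) applied to $B_{q,D}f$ finishes the proof.

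You use the same arithmetic input: the weights $a\mapsto S(a/q)$ are an average of $q$ unimodular characters. Your reading of $S(a/q)$ for non-reduced $a/q$ agrees with \eqref{eq:S-v2}, since $r\mapsto P(r)\bmod q'$ is $q'$-periodic, and the paper uses the same reading. You exploit this linearly: each character becomes a translation on $\mathbb Z$, which is an isometry of $L_p(\Aalg;\ell_\infty)$, times a modulated cutoff, and the triangle inequality averages the $q$ pieces. Your route avoids the kernel estimate and the auxiliary cutoff, and its constant is that of Lemma~\ref{lem:cyclic-v2}. The paper's route gives a clean factorization (weighted $=$ unweighted $\circ$ an $\ell_1$-bounded convolution), which would also handle other weights with a summable associated kernel.

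One correction: the obstacle you flagged is not real. Multiplying $\varphi$ by $e^{2\pi i c\eta}$ only translates $\widehat\varphi$, so $\|\widehat{\psi_r}\|_1=\|\widehat\varphi\|_1$ for every real $c$, including $c=P(r)/D$. Equivalently, on $\mathbb R$ the symbol $\psi_r(D\xi/q)$ is $T_{\varphi(D\cdot/q)}$ composed with a (non-integer) translation. So replacing $P(r)$ by $t_r$ is harmless but unnecessary. The hypothesis $D\ge q$ is needed only where Lemma~\ref{lem:cyclic-v2} uses it, namely to keep $\supp m_N\subset(-1/2,1/2)$.
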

Actually the constant also depends on the choice of $\varphi$ but this will not matter.
\begin{proof}
 Define for $\xi\in \mathbb T$
\[
 b_{q,D}(\xi)
 =
 \sum_{a=0}^{q-1}
 S(a/q)\varphi(D(\xi-a/q)).
\]
Its inverse Fourier kernel is
\begin{equation*}
 \check b_{q,D}(n)
 =
 D^{-1}\check\varphi(n/D)
 \sum_{a=0}^{q-1}S(a/q) e^{2\pi ina/q}.
\end{equation*}
Using \eqref{eq:S-v2} and the orthogonality relation
\[
 \frac1q\sum_{a=0}^{q-1} e^{2\pi iam/q}
 =
 \begin{cases}
 1,&q\mid m,\\
 0,&q\nmid m,
 \end{cases}
\]
we obtain
\begin{align*}
\sum_{a=0}^{q-1}S(a/q) e^{2\pi ina/q}
&=
\frac1q\sum_{r=0}^{q-1}
\sum_{a=0}^{q-1}
 e^{2\pi ia(n-P(r))/q}
\notag\\
&=
\#\{0\le r<q:P(r)\equiv n\pmod q\}
=:r_q(n).
\end{align*}
The function $r_q$ is $q$-periodic and clearly
\begin{align*}
 \sum_{j=0}^{q-1}r_q(j)
 & =q.
\end{align*}
Therefore, writing $n=\ell q+j$ with $0\le j<q$, we get
\begin{align*}
\|\check b_{q,D}\|_{\ell_1}
&=
\sum_{\ell\in\Z}\sum_{j=0}^{q-1}
\frac1D
\left|
\check\varphi\left(\frac{\ell q+j}{D}\right)
\right|r_q(j)\\
&\le
\sum_{\ell\in\Z}\frac qD
\max_{0\le j<q}
\left|
\check\varphi\left(\frac{\ell q+j}{D}\right)
\right|.
\end{align*}
For any $0\leq x,y\leq \frac q D$, by the mean value theorem
$$ \left|\check\varphi\left(\frac{\ell q}{D} + x\right)-\check\varphi\left(\frac{\ell q}{D} + y\right)\right|\lesssim_\varphi \min\Big\{ 1,   \frac {D}{\ell^2q}\Big\}.$$
It follows that for any integer $K>0$
$$\sum_{\ell\in\Z}\frac qD
\max_{0\le j<q}
\left|
\check\varphi\left(\frac{\ell q+j}{D}\right)
\right|\lesssim_\varphi \int_{\mathbb R} |\check\varphi| + 2\frac{Kq}D +\frac D q \sum_{|\ell|\geq K} \frac 1{\ell^2}.$$
Optimizing over $K$ gives
$$\|\check b_{q,D}\|_{\ell_1}\lesssim_\varphi 1.$$
Let $B_{q,D}$ be the convolution operator with symbol $b_{q,D}$.
 Young's inequality implies
\begin{equation}\label{eq:B-bound-v2}
 \|B_{q,D}f\|_{L_p(\Aalg)}
 \lesssim_{\varphi}\|f\|_{L_p(\Aalg)}.
\end{equation}

Choose a smooth function $\widetilde\varphi$ which equals one on
$\supp\varphi$ and is supported in $(-1/2,1/2)$. Apply Lemma \ref{lem:cyclic-v2} to $B_{q,D}f$, with
$\widetilde\varphi$ in place of $\varphi$.  The symbol of the resulting Fourier multiplier in the variable $\xi$ is
\begin{align*}
&\left[
 \sum_{a=0}^{q-1}
 w_N(\xi-a/q)
 \widetilde\varphi(D(\xi-a/q))
 \right]b_{q,D}(\xi)\\
&\quad=
\sum_{a=0}^{q-1}\sum_{b=0}^{q-1}
S(b/q)w_N(\xi-a/q)
\widetilde\varphi(D(\xi-a/q))
\varphi(D(\xi-b/q)).
\end{align*}
Note that all terms with $a\ne b$ vanish.   Since $\widetilde\varphi=1$ on $\supp\varphi$, the resulting multiplier is exactly
\[
 \sum_{a=0}^{q-1}
 S(a/q)w_N(\xi-a/q)
 \varphi(D(\xi-a/q)).
\]
Lemma \ref{lem:cyclic-v2} and \eqref{eq:B-bound-v2} now give
\eqref{eq:weighted-v2}.
\end{proof}

\begin{proof}[Proof of Lemma~\ref{lem:M-p-v2}]
For $s\ge2$, apply Lemma \ref{lem:weighted-v2} with
\[
 q=Q_s,\qquad D=Q_s^2,\qquad \varphi=\phi.
\]
The condition $D\ge q$ is then satisfied.  The case $s=1$ involves
only the fixed denominator $Q_1=2$ and follows from the same argument
with a harmless change of cutoff.
\end{proof}

\subsection{\texorpdfstring{$L_2$}{L2}-approximation estimates}
\label{subsec:L2-approximating}
The main result of this subsection is the following lemma, which
gives a summably decaying approximation of $K_{2^k}$ by $M_{s,k}$
in the operator-valued maximal $L_2$-norm, together with the
corresponding estimate for the successive differences
$M_{s,k}-M_{s-1,k}$.

\begin{lemma}\label{lem:L2decay-v2}
There is $\delta>0$ such that, for every $s\ge1$,
\begin{equation}\label{eq:error-decay-v2}
 \left\|
 \bigl(f*(K_{2^k}-M_{s,k})\bigr)_{k\ge4^s}
 \right\|_{L_2(\Aalg;\ell_\infty)}
 \lesssim2^{-\delta s}\|f\|_2.
\end{equation}
Moreover, for every $s\ge2$,
\begin{equation}\label{eq:difference-decay-v2}
 \left\|
 \bigl(f*(M_{s,k}-M_{s-1,k})\bigr)_{k\ge4^s}
 \right\|_{L_2(\Aalg;\ell_\infty)}
\lesssim2^{-\delta s}\|f\|_2.
\end{equation}
\end{lemma}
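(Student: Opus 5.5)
The plan follows the route announced in the introduction. We pass through the truncated multiplier $L_{2^k}^{(\le 2^s)}:=\sum_{t\le s}\widehat L_{t,2^k}$, and possibly a variant built with Bourgain's earlier decomposition \cite[Sections~3 and~6]{Bo881} into sets with common denominators. Write
\[
 \widehat K_{2^k}-\widehat M_{s,k}
 =\bigl(\widehat K_{2^k}-\textstyle\sum_{t\ge0}\widehat L_{t,2^k}\bigr)
 +\textstyle\sum_{t>s}\widehat L_{t,2^k}
 +\bigl(L_{2^k}^{(\le2^s)}-\widehat M_{s,k}\bigr).
\]
We then estimate each piece in $L_2(\Aalg;\ell_\infty)$.

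\textbf{First piece.} By Proposition~\ref{Bo89-es1}(iv) its symbol is $O(2^{-c_0k})$. Using the trivial bound
\[
 \|(x_k)_k\|_{L_2(\ell_\infty)}\le\Bigl(\sum_k\|x_k\|_2^2\Bigr)^{1/2},
\]
which holds because one can dominate by $\bigl(\sum_k|x_k|^2\bigr)^{1/2}$-type square functions for self-adjoint parts, and applying Plancherel, we get a bound $\sum_{k\ge4^s}2^{-c_0k}\lesssim 2^{-\delta s}$.

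\textbf{Second piece (layers).} For each layer $t>s$, $\mathfrak R_t$ is contained in the set of fractions $a/Q$ with common denominator $Q=\mathrm{lcm}$ of the denominators in that layer. More precisely, following \cite{Bo881}, we group $\xi=a/q$ by exact denominator $q$, so that each group is of the form treated in Lemma~\ref{lem:weighted-v2}. We prove a layer estimate: for fixed $q\in[2^t,2^{t+1})$, the maximal operator with symbol $\sum_{(a,q)=1}S(a/q)w_{2^k}(\cdot-a/q)\phi(10^t(\cdot-a/q))$ is bounded on $L_2(\ell_\infty)$ by $\sup_{(a,q)=1}|S(a/q)|\lesssim q^{-\delta_d}$. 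To see this, factor out a fixed-$k$ multiplier $b$ carrying the $S(a/q)$ weights; its $L_2$ norm is just $\sup|S|$ by Plancherel. The remaining maximal part is bounded uniformly by Lemma~\ref{lem:cyclic-v2}/Lemma~\ref{lem:CHW-v2}, with the coprimality restriction handled by the multiplicativity of $S$ (Ramanujan-sum decomposition, as in Bourgain). Summing over $q\sim2^t$ would cost $2^t$. This is too much for a naive sum, so instead we use Weil's bound $|S(a/q)|\lesssim q^{-1/2+\epsilon}$ together with multiplicativity, as in \cite{Bo881}. This should yield a layer bound $2^{-\sigma' t}$ and hence a total $\lesssim2^{-\delta s}$. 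This is what the introduction refers to as Lemma~\ref{lem:layer-v2}, and I expect it to be the \emph{main obstacle}. Getting a summable bound over denominators requires the scalar approximation error $O((\log N)^{-\sigma})$ with $\sigma>1/2$, and with it the full arithmetic input rather than the crude $2^{-s\delta_d}$.

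\textbf{Third piece (comparison).} This is the analogue of Lemma~\ref{lem:comparison-v2}. On each arc around $a/q$ with $q<2^{s+1}$, we have $a/q=a'/Q_s$ because $q\mid Q_s$. The difference between the two symbols then consists of two contributions. The first is the cutoff mismatch $\phi(10^t\cdot)$ versus $\phi(Q_s^2\cdot)$; since $k\ge4^s$ makes $2^{kd}\gg Q_s^2$, $|w_{2^k}|$ decays there by \eqref{eq:wdec-v2}. This gives a square-summable bound $(Q_s^2 2^{-kd})^{1/d}$ summed over $k$, which is $\lesssim 2^{-s}$. The second contribution is the fractions $a'/Q_s$ whose reduced denominator exceeds $2^{s}$; these are controlled again by the layer estimate of the second piece, giving $2^{-\delta s}$.

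\textbf{The difference estimate.} Finally, \eqref{eq:difference-decay-v2} follows from \eqref{eq:error-decay-v2} by the triangle inequality, applied at $s$ and at $s-1$ on $k\ge4^s\ge4^{s-1}$.
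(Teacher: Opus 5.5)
\textbf{The gap: the second piece.} You need a bound in $L_2(\Aalg;\ell_\infty)$ for the layers $\sum_{t>s}\widehat L_{t,2^k}$ of \cite{Bo89}. This is exactly the point where the paper has to abandon Bourgain's 1989 construction.

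Your fixed-denominator estimate is correct, and easier than you suggest. For fixed $q$ the centres lie in $q^{-1}\mathbb Z$ and $\phi(10^t\cdot)$ is supported inside $(-\frac1{2q},\frac1{2q})$, so Lemma~\ref{lem:packet-v2} gives $\lesssim\sup_a|S(a/q)|\lesssim q^{-\delta_d}$. No Ramanujan sums are needed, since $c_\theta$ may vanish off the reduced fractions.

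The problem is the sum over the $\simeq2^t$ denominators in $\mathfrak R_t$. It loses a factor $2^t$, and no pointwise information on $S$ can pay for this.
\begin{itemize}
\item For $P(n)=n^2$ and odd squarefree $q$ one has exactly $|S(a/q)|=q^{-1/2}$.
\item For general moduli only $q^{-\delta_d}$ holds. The square-root bound you attribute to Weil fails at prime powers, where $|S|$ can be of size $q^{-1/d}$.
\item Multiplicativity does not improve a triangle-inequality sum.
\end{itemize}
To gain anything you must treat many denominators at once through a common denominator $Q$. The sampling principle then forces the cutoff to sit inside $(-\frac1{2Q},\frac1{2Q})$, i.e.\ $Q\lesssim10^t$. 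Such a $Q$ has only $2^{o(t)}$ divisors, so for $P(n)=n^2$ you still face $2^{t-o(t)}$ groups with coefficients of size $2^{-t/2}$. The lcm of the whole layer is far larger than $10^t$.

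This is precisely why \cite{Bo89} needs the multi-frequency inequality \cite[Lemma~4.1]{Bo89}, whose noncommutative analogue is open (Remark~\ref{rem:Bourgain-Lemma41}). So the layer bound $2^{-\sigma't}$ is not reachable by the route you describe. Note also that in your decomposition the exponent $\sigma>1/2$ plays no role: your first piece is $O(2^{-c_0k})$ and your second piece is an exact symbol. This shows the two constructions have been conflated.

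\textbf{The missing idea: change the intermediate approximant itself.} The paper writes
\[
\widehat K_{2^k}-\widehat M_{s,k}=(\widehat K_{2^k}-L_{2^k})+(L_{2^k}-L^{(\le2^s)}_{2^k})+(L^{(\le2^s)}_{2^k}-\widehat M_{s,k}),
\]
with Bourgain's 1988 multiplier $L_N$ from \eqref{eq:LN-v2}. There the frequencies are grouped into the sets $\Gamma_u$, which share the common denominator $\mathcal Q_u=(u!)^{A\lfloor\log u\rfloor}$ and carry super-narrow cutoffs $\vartheta_u$ of width $\approx2^{-u^\kappa}$. The wider cutoffs $\varphi_u$ (width $\approx2^{-u^\rho}$) are used only for the at most $u^{1+\varepsilon}$ denominators $q\le u^{1+\varepsilon}$. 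These are split into $\lesssim u^{1+\varepsilon-\rho'}$ blocks whose least common multiple fits that width.

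Applying Lemma~\ref{lem:packet-v2} to each group then gives $u^{-\gamma}$ with $\gamma=\delta_d-(1+\varepsilon-\rho')>0$; this is Lemma~\ref{lem:layer-v2}, and it is summable over dyadic $u>2^s$. The price is paid in the first piece, which is now only $O((\log N)^{-\sigma})$ by \eqref{eq:scalar-error-v2}. The condition $\sigma>1/2$ is exactly what gives $\bigl(\sum_{k\ge4^s}k^{-2\sigma}\bigr)^{1/2}\lesssim2^{-(2\sigma-1)s}$. The third piece is Lemma~\ref{lem:comparison-v2}, in which the centres of $L^{(\le2^s)}_{2^k}$ absent from $\widehat M_{s,k}$ all have $q\gtrsim4^s$.

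\textbf{Two smaller slips in your comparison step.}
\begin{itemize}
\item $q<2^{s+1}$ does not imply $q\mid(2^s)!$; primes in $(2^s,2^{s+1})$ do not divide it. So part of layer $s$ is not matched by $\widehat M_{s,k}$.
\item The centres of $\widehat M_{s,k}$ with reduced denominator $>2^s$ need no layer estimate. They all lie in $Q_s^{-1}\mathbb Z$ under the cutoff $\phi(Q_s^2\cdot)$, so Lemma~\ref{lem:packet-v2} with $Q=Q_s$ bounds them by $2^{-s\delta_d}$.
\end{itemize}
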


In the scalar setting, Bourgain proved the analogue of
Lemma~\ref{lem:L2decay-v2} by means of a logarithmic maximal estimate
for Fourier projections onto shrinking neighborhoods of finitely
many frequencies; see \cite[Lemma~4.1]{Bo89}.  Since a suitable
noncommutative counterpart of that estimate is not presently
available, we use a different argument, combining the decomposition
from \cite[Section~9]{Bo881} with the construction of $M_{s,k}$ in
\cite[Section~7]{Bo89}.  This gives
\eqref{eq:error-decay-v2} and \eqref{eq:difference-decay-v2} in the
operator-valued setting.  The obstruction to applying Bourgain's
argument directly is the following:
\begin{remark}\label{rem:Bourgain-Lemma41}
Let $\lambda_1,\ldots,\lambda_K\in\mathbb T$ and set
\[
 R_j
 =
 \left\{
 \xi\in\mathbb T:
 \min_{1\le r\le K}
 d_{\mathbb T}(\xi,\lambda_r)\le2^{-j}
 \right\}.
\]
Bourgain's scalar estimate \cite[Lemma~4.1]{Bo89} takes the form
\[
 \left\|
 \sup_j
 \left|
 \mathcal T_{\mathbf1_{R_j}}(f)
 \right|
 \right\|_{\ell_2(\mathbb Z)}
 \lesssim
 \bigl(\log(K+1)\bigr)^2
 \|f\|_{\ell_2(\mathbb Z)}.
\]
A natural operator-valued counterpart would be
\begin{equation}\label{eq:nc-Bourgain-Lemma41}
 \left\|
 \left(
 \mathcal T_{\mathbf1_{R_j}}(f)
 \right)_j
 \right\|_{L_2(\Aalg;\ell_\infty)}
 \lesssim
 \bigl(\log(K+1)\bigr)^2\|f\|_2.
\end{equation}
Bourgain used  pointwise supremum and a metric-entropy
argument.  As $L_2(\Aalg;\ell_\infty)$ has no analogous
pointwise description and we are not aware of any tool extending metric-entropy. It is presently unclear whether
\eqref{eq:nc-Bourgain-Lemma41} holds, and this appears to be an
interesting problem in its own right.
\end{remark}

We now use the decomposition introduced by Bourgain in
\cite[Section~9]{Bo881} and state the auxiliary estimates
needed for the proof of Lemma~\ref{lem:L2decay-v2}.

For dyadic $u=2^j$, $j\geq1$, define
\begin{equation}\label{eq:Qu-v2}
 \mathcal Q_u=(u!)^{A\lfloor\log u\rfloor},
\end{equation}
where $A=A(P)$ is sufficiently large, and put
\[
 \widetilde\Gamma_u
 =
 \left\{
 \frac{a}{q}\in\mathbb Q\cap\mathbb T:
 (a,q)=1,\ q\mid\mathcal Q_u
 \right\},
 \qquad
 \Gamma_u
 =
 \widetilde\Gamma_u\setminus\widetilde\Gamma_{u/2}.
\]

Recall that $\delta_d>0$ is the exponent in
Proposition~\ref{Bo89-es1}(i) and depends only on the degree $d$ of
$P$.  Fix parameters
\[
 0<\rho<1<\kappa<2,
 \qquad
 \varepsilon>0,
\]
with $\rho$ and $\kappa$ sufficiently close to $1$ and
$\varepsilon$ sufficiently small so that
\[
 1+\varepsilon-\rho<\delta_d.
\]
Fix $\zeta\in C_c^\infty((-2,2))$ such that
$0\leq\zeta\leq1$ and $\zeta=1$ on $[-1,1]$. For each dyadic $u$,
set
\[
 \varphi_u(\beta):=\zeta(2^{u^\rho}\beta),
 \qquad
 \vartheta_u(\beta):=\zeta(2^{u^\kappa}\beta).
\]
Then
\begin{align*}
 \varphi_u(\beta)
 &=1
 \quad\bigl(|\beta|\leq2^{-u^\rho}\bigr),
 &
 \supp\varphi_u
 &\subset
 \bigl\{|\beta|\leq2^{1-u^\rho}\bigr\},
 \\
 \vartheta_u(\beta)
 &=1
 \quad\bigl(|\beta|\leq2^{-u^\kappa}\bigr),
 &
 \supp\vartheta_u
 &\subset
 \bigl\{|\beta|\leq2^{1-u^\kappa}\bigr\}.
\end{align*}

Fix a smooth cutoff $\chi_0$ which equals one in a neighborhood of
the origin and is supported in $(-1/2,1/2)$.  Define
\begin{equation}\label{eq:LN-v2}
 L_N(\alpha)
 =
 w_N(\alpha)\chi_0(\alpha)
 +
 \sum_{u\ {\rm dyadic}}\Psi_{u,N}(\alpha).
\end{equation}
For these choices of parameters, Bourgain's estimate \cite[(9.17)]{Bo881} then gives an exponent
$\sigma>1/2$ such that
\begin{equation}\label{eq:scalar-error-v2}
 \|\widehat K_N-L_N\|_{L_\infty(\mathbb T)}
 \lesssim_{P} (\log N)^{-\sigma},
 \qquad N\geq3.
\end{equation}
We shall also use the following lemma, whose proof is given later in
the subsection.

\begin{lemma}\label{lem:layer-v2}
There is $\gamma>0$ such that, for every dyadic $u$,
\begin{equation}\label{eq:layer-v2}
 \left\|
 \bigl(\mathcal T_{\Psi_{u,N}}(f)\bigr)_N
 \right\|_{L_2(\Aalg;\ell_\infty)}
 \lesssim u^{-\gamma}\|f\|_2.
\end{equation}
\end{lemma}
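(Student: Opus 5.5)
I will prove \eqref{eq:layer-v2} by factoring the multiplier $\Psi_{u,N}$ into two pieces. The first is a \emph{fixed} (independent of $N$) multiplier that carries the arithmetic weights $S(a/q)$ and is small in $L_2$-operator norm. The second is an $N$-dependent multiplier with a single common denominator, to which the noncommutative sampling principle (Lemma~\ref{lem:CHW-v2}, through Lemma~\ref{lem:cyclic-v2}) applies with a bound uniform in $u$. I take $\Psi_{u,N}$ in Bourgain's form from \cite[Section~9]{Bo881}:
\[
 \Psi_{u,N}(\alpha)=\sum_{a/q\in\Gamma_u}S(a/q)\,w_N(\alpha-a/q)\,\vartheta_u(\alpha-a/q)\,\varphi_u(\alpha-a/q).
\]
If the actual definition uses only one of the cutoffs, I would insert the other, because $\kappa>\rho$ gives $\varphi_u\equiv1$ on $\supp\vartheta_u$ for all large $u$.

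\emph{Step 1 (factorization).} Define
\[
 b_u(\alpha)=\sum_{a/q\in\Gamma_u}S(a/q)\,\varphi_u(\alpha-a/q),
 \qquad
 m_{u,N}(\alpha)=\sum_{0\le a<\mathcal Q_u}w_N(\alpha-a/\mathcal Q_u)\,\vartheta_u(\alpha-a/\mathcal Q_u).
\]
Every $a/q\in\Gamma_u$ can be written as $a'/\mathcal Q_u$, since $q\mid\mathcal Q_u$. Distinct points $a/\mathcal Q_u$ are $1/\mathcal Q_u$-separated. Moreover, $\mathcal Q_u\le 2^{Au\log^2u}\ll 2^{u^\rho}\le 2^{u^\kappa}$ once $u$ is large, using $\rho<1$ only through $u\log^2u\ll u^\rho$; if this fails for the given $\rho$, I would use the separation at scale $2^{-u^\rho}$ between points of $\Gamma_u$ directly, as Bourgain does. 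Hence the supports of the bumps are pairwise disjoint, and since $\varphi_u=1$ on $\supp\vartheta_u$ I get $\Psi_{u,N}=b_u\,m_{u,N}$. Only the terms of $m_{u,N}$ centred at points of $\Gamma_u$ survive the product. Small $u$ contribute only finitely many layers, which are bounded trivially.

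\emph{Step 2 (the fixed factor).} The supports of the $\varphi_u(\cdot-a/q)$ are disjoint and $0\le\varphi_u\le1$, so Plancherel in the $\mathbb Z$-variable (tensored with $\mathcal M$) gives
\[
 \|\mathcal T_{b_u}\|_{L_2\to L_2}\le\sup_{a/q\in\Gamma_u}|S(a/q)|.
\]
If $a/q\in\Gamma_u$, then $q\nmid\mathcal Q_{u/2}$. Every integer $q\le u/2$ divides $(u/2)!$ and hence divides $\mathcal Q_{u/2}$, so $q>u/2$. By \eqref{eq:Sdec-v2}, $|S(a/q)|\lesssim_P u^{-\delta_d}$.

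\emph{Step 3 (the maximal factor).} Apply Lemma~\ref{lem:cyclic-v2} with $p=2$, $q=\mathcal Q_u$, $D=2^{u^\kappa-1}$ and $\varphi=\zeta(2\,\cdot)$, so that the support lies in $(-1/2,1/2)$ and $\zeta(2^{u^\kappa}\beta)=\varphi(D\beta)$. The hypothesis $\mathcal Q_u\le D$ holds for large $u$ because $\kappa>1$. This gives
\[
 \|(\mathcal T_{m_{u,N}}g)_N\|_{L_2(\Aalg;\ell_\infty)}\lesssim_P\|g\|_2
\]
uniformly in $u$. Taking $g=\mathcal T_{b_u}f$ and combining with Step 2 yields \eqref{eq:layer-v2} with $\gamma=\delta_d$.

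I expect the main obstacle to be the bookkeeping in Step 1. One must check that the exact definition of $\Psi_{u,N}$ in \cite{Bo881} factors in this way, in particular whether its cutoff is $\vartheta_u$ or $\varphi_u$. One must also check that the sizes of $\mathcal Q_u$ relative to $2^{u^\rho}$ and $2^{u^\kappa}$ give both the disjointness of supports and the condition $q\le D$ needed in Lemma~\ref{lem:cyclic-v2}. If the cutoff is only the wider $\varphi_u$, I would still run Lemma~\ref{lem:cyclic-v2} with $D\approx2^{u^\rho}\gg\mathcal Q_u$, which requires $u\log^2u\ll u^\rho$. If that fails, I would instead split $\Gamma_u$ by the exact denominator $q$ and sum over $q$ using the decay $q^{-\delta_d}$ together with a counting bound on the number of $q\mid\mathcal Q_u$. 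This is where the condition $1+\varepsilon-\rho<\delta_d$ would enter.
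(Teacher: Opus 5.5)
There is a genuine gap. Your Steps 1--3 handle only the part of $\Psi_{u,N}$ that carries the narrow cutoff $\vartheta_u$, and they rely on the comparison $\mathcal Q_u\ll 2^{u^\rho}$, which is false. Indeed $\log\mathcal Q_u\simeq u(\log u)^2$ and $\rho<1$, so $\mathcal Q_u\gg 2^{u^\rho}$; only $\mathcal Q_u|\supp\vartheta_u|\ll1$ holds (Proposition~\ref{prop:scalar-v2}(ii)). In the paper, $\Psi_{u,N}=A_{u,N}+B_{u,N}$, that is, $\Psi_{u,N}(\alpha)=\sum_{\theta=a/q\in\Gamma_u}S(\theta)w_N(\alpha-\theta)\eta_\theta(\alpha-\theta)$ with $\eta_\theta=\vartheta_u$ if $q>u^{1+\varepsilon}$ and $\eta_\theta=\varphi_u$ if $q\le u^{1+\varepsilon}$. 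Your version is only $A_{u,N}$. For that narrow part, your factorization is essentially the paper's Lemma~\ref{lem:packet-v2} with $Q=\mathcal Q_u$. It needs one repair: $b_u$ must be built from bumps of width $<1/\mathcal Q_u$, for example indicators of $(\theta-\frac1{2\mathcal Q_u},\theta+\frac1{2\mathcal Q_u})$. As written, the $\varphi_u$-bumps have width $\approx 2^{-u^\rho}$ and sit on the $\mathcal Q_u^{-1}$-spaced points of $\Gamma_u$, so they overlap heavily and $\|\mathcal T_{b_u}\|\le\sup|S|$ fails.

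The real difficulty is the wide part $B_{u,N}$ (denominators $q\le u^{1+\varepsilon}$, cutoff $\varphi_u$), and neither of your fallbacks handles it.
\begin{itemize}
\item Lemma~\ref{lem:cyclic-v2} needs all centres in $Q^{-1}\mathbb Z$ with $Q|\supp\varphi_u|\ll1$, i.e.\ $Q\lesssim 2^{u^\rho}$. But every prime in $(u/2,u]$ is such a denominator, so their least common multiple is at least $e^{(1/2+o(1))u}$.
\item Splitting by the exact denominator $q$ does give pieces to which Lemma~\ref{lem:packet-v2} applies. In $L_2(\Aalg;\ell_\infty)$, however, you can only add them with the triangle inequality. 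There are $\gtrsim u^{1+\varepsilon}/\log u$ such denominators (already $q=\ell m$ with $\ell\in(u/2,u]$ prime and $m\le u^{\varepsilon}$). The resulting bound $\sum_q q^{-\delta_d}\gtrsim u^{(1+\varepsilon)(1-\delta_d)}/\log u$ grows, and the condition $1+\varepsilon-\rho<\delta_d$ cannot enter along this route.
\end{itemize}

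The missing idea is to group the denominators into blocks (Proposition~\ref{prop:scalar-v2}(iii)):
\begin{itemize}
\item Split the $q\le u^{1+\varepsilon}$ occurring in $\Gamma_u$ into $\lesssim u^{1+\varepsilon-\rho'}$ blocks $\Lambda$, each with at most $u^{\rho'}$ denominators, where $\rho'<\rho$.
\item The least common multiple $Q_\Lambda$ of a block satisfies $\log Q_\Lambda\lesssim u^{\rho'}\log u$, hence $Q_\Lambda|\supp\varphi_u|\ll1$.
\item Lemma~\ref{lem:packet-v2} then gives $u^{-\delta_d}$ per block, using your correct observation that $q>u/2$.
\item Summing over blocks by the triangle inequality gives $u^{1+\varepsilon-\rho'-\delta_d}=u^{-\gamma}$.
\end{itemize}
So the lemma holds with $\gamma=\delta_d-(1+\varepsilon-\rho')$, not with $\gamma=\delta_d$ as you claim.
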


For $s\ge1$, put
\begin{equation}\label{eq:defLNs}
 L_N^{(\le 2^s)}
 =w_N\chi_0+
 \sum_{\substack{u\le 2^s\\u\ {\rm dyadic}}}\Psi_{u,N}.
\end{equation}
The following estimate is the main ingredient in the
proof of Lemma~\ref{lem:L2decay-v2}.  Its proof is postponed until
the end of this subsection.

\begin{lemma}\label{lem:comparison-v2}
There is {$\gamma'>0$} such that, for every $s\ge1$,
\begin{equation}\label{eq:comparison-v2}
 \left\|
 \bigl(
\mathcal T_{
 L_{2^k}^{(\le2^s)}-\widehat M_{s,k}}( f)
 \bigr)_{k\ge4^s}
 \right\|_{L_2(\Aalg;\ell_\infty)}
 \lesssim 2^{-\gamma's}\|f\|_2.
\end{equation}
\end{lemma}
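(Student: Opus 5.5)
The plan is to reduce the maximal $L_2$-estimate \eqref{eq:comparison-v2} to a square-function bound, and then exploit the fact that the two multipliers are built from the same arithmetic data. The basic tool is the elementary domination
\[
\|(x_k)_k\|_{L_2(\Aalg;\ell_\infty)}\le\Big\|\Big(\sum_k|x_k|^2\Big)^{1/2}\Big\|_2 .
\]
For Fourier multipliers on $\Aalg$, Plancherel turns the right-hand side into
\[
\Big(\sup_\theta\sum_{k\ge4^s}|m_k(\theta)|^2\Big)^{1/2}\|f\|_2 .
\]
Everything is commutative in the $\mathbb Z$-variable, and the tensor factor $\mathcal M$ is inert. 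So it would suffice to show
\[
\sup_\theta\sum_{k\ge4^s}\big|L^{(\le2^s)}_{2^k}(\theta)-\widehat M_{s,k}(\theta)\big|^2\lesssim 2^{-2\gamma's},
\]
at least after splitting off a part that is handled by an $\ell_\infty$-valued bound rather than a square function.

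\textbf{Step 1: express both multipliers over the same rationals.} For $u\le 2^s$, every $a/q\in\widetilde\Gamma_u$ has $q\mid\mathcal Q_u$. However, $\mathcal Q_u$ can be much larger than $Q_s=(2^s)!$, so the rationals do not coincide exactly. I would therefore split $L^{(\le2^s)}_{2^k}$ into two parts: rationals with $q<2^{s}$, which all divide $Q_s$, and the remaining ones with $q\ge 2^{s}$, say. For the second part, and for the terms in $\widehat M_{s,k}$ with reduced denominator $q\ge 2^{s/2}$, I would use the decay $|S(a/q)|\lesssim q^{-\delta_d}$ from Proposition~\ref{Bo89-es1}(i). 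These pieces would then be treated as errors, bounded in the maximal norm via the sampling principle Lemma~\ref{lem:CHW-v2} combined with Lemma~\ref{lem:continuous-v2}, exactly as in the proofs of Lemmas~\ref{lem:cyclic-v2}--\ref{lem:weighted-v2}. Grouping rationals by common denominator $q$ and summing over $q\sim 2^j$ then gives a bound of order $\sum_{j}2^{j}\cdot 2^{-j\delta_d}$, which is not summable. So this step must instead use the $L_2$ square-function route where possible; the layer bound of Lemma~\ref{lem:layer-v2} handles the dyadic $\Psi_{u,N}$ pieces for large $u$, giving $\sum_{u\ge 2^{s/2}}u^{-\gamma}$. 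I expect the precise power bookkeeping here to require choosing the threshold between $2^{s/2}$ and $2^s$ carefully.

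\textbf{Step 2: compare cutoffs at matched rationals.} For the common rationals $a/q$ with $q$ small, the two symbols differ only in their cutoffs: $\phi(Q_s^2(\theta-a/q))$ versus $\varphi_u\vartheta_u$-type cutoffs at scales $2^{-u^\rho}$ and $2^{-u^\kappa}$. Here I use the restriction $k\ge4^s$, which gives $2^{kd}\gg Q_s^{2}$ and $2^{kd}\gg 2^{u^\kappa}$. On the region where the cutoffs differ, one has $|\theta-a/q|\gtrsim\min(Q_s^{-2},2^{-u^\kappa})$, so \eqref{eq:wdec-v2} gives
\[
|w_{2^k}(\theta-a/q)|\lesssim \big(2^{kd}\min(Q_s^{-2},2^{-u^\kappa})\big)^{-1/d}.
\]
This decays geometrically in $k$. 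Squaring and summing over $k\ge4^s$ gives an exponentially small bound in $4^s$ for each rational. The arcs are disjoint, so the supremum over $\theta$ involves one rational at a time, and this part costs at most $2^{-cs}$.

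\textbf{Main obstacle.} I expect the hard part to be Step 1. There, $\widehat M_{s,k}$ includes all $a/Q_s$, hence denominators up to $(2^s)!$, whereas $L^{(\le2^s)}$ includes divisors of $\mathcal Q_u$, so the symmetric difference of the two rational sets is large. The square-function bound for the $\widehat M_{s,k}$ terms with $2^{s'}\le q<2^{s'+1}$ is pointwise $\lesssim 2^{-s'\delta_d}$ by disjointness of the arcs. Summing the square function over $s'\ge s/2$ gives the decay $2^{-\delta_d s/2}$, since in $\sum_k|\cdot|^2$ the sum over $k$ converges by the decay of $w$ away from each rational, while the sum over $s'$ is done in norm. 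I would try this route first, and fall back on Lemma~\ref{lem:layer-v2} for the $\Psi_u$ tail.
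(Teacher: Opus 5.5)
\textbf{Gap: the high-denominator terms are not controlled.} You correctly flag these as the crux, but your main route cannot work. Take the part of $\widehat M_{s,k}$ with $2^{s'}\le q<2^{s'+1}$. Since the arcs around the points of $Q_s^{-1}\mathbb Z$ are disjoint, at $\alpha=\theta$ this part equals $S(\theta)w_{2^k}(0)\phi(0)=S(\theta)$ for every $k$. Hence $\sum_{k\ge4^s}|\cdot|^2=\infty$ there. The decay of $w_{2^k}$ only helps where $|\alpha-\theta|$ is bounded below, i.e.\ for cutoff differences as in your Step~2, and never for the main terms.

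Your fallbacks do not close the gap either. Lemma~\ref{lem:layer-v2} bounds whole layers $\Psi_{u,N}$, but large denominators occur in low layers too, since each $\Gamma_u$ contains denominators up to $\mathcal Q_u$. So $\{q\ge 2^s\}$ is not a union of layers. Your non-summable count $\sum_j 2^j2^{-j\delta_d}$ comes from sampling one denominator $q$ at a time. Finally, your premise that the two center sets have a large symmetric difference is false.

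\textbf{Missing observation 1: the extra centers are one-sided and have large denominators.} Since $Q_s=(2^s)!$ divides $\mathcal Q_{2^s}$, every center of $\widehat M_{s,k}$ is already a center of $L^{(\le2^s)}_{2^k}$ (in the paper's notation, $\Lambda_s\subset\Theta_s$). Let $a/q$ be an extra center, so $q\mid\mathcal Q_{2^s}$ and $q\nmid(2^s)!$. Then some prime $\ell\le 2^s$ has $v_\ell(q)>v_\ell((2^s)!)$, which forces $q\gtrsim 4^s$. Consequently:
\begin{itemize}
\item $|S(a/q)|\lesssim 2^{-2\delta_d s}$;
\item these centers occur only in the $\vartheta_u$-parts, since $u^{1+\varepsilon}\le 2^{(1+\varepsilon)s}\ll4^s$;
\item for each of the at most $s+1$ levels $u$, they all lie in $\mathcal Q_u^{-1}\mathbb Z$, with $\mathcal Q_u|\supp\vartheta_u|\ll1$.
\end{itemize}
Lemma~\ref{lem:packet-v2} samples at this single large modulus and allows arbitrary bounded coefficients on an arbitrary subset of centers. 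It therefore gives $(1+s)2^{-2\delta_d s}\|f\|_2$, with no loss in the number of denominators.

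\textbf{Missing observation 2: the cutoff comparison needs no restriction on $q$.} On the support of the cutoff difference one has $|\beta|\gtrsim 2^{-2^{\kappa s}}$, so $|w_{2^k}(\beta)|\lesssim 2^{-k+C2^{\kappa s}}$. Any count of overlapping terms is then absorbed by $\sum_{k\ge4^s}2^{-2k}$, because $\kappa<2$. This holds for the paper's bound $2^{C_0 s}$, or even the crude bound $\#\Lambda_s\le Q_s=2^{O(s2^s)}$. It also makes harmless your claim that the arcs are disjoint, which is false across different levels $u$.

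Your threshold decomposition could be repaired the same way. Bound the large-$q$ parts of both multipliers with Lemma~\ref{lem:packet-v2} at the moduli $Q_s$ and $\mathcal Q_u$, and at $Q_\Lambda$ for the $\varphi_u$-blocks, instead of using a square function.
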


Lemma~\ref{lem:layer-v2} is obtained by combining Bourgain's
decomposition of the rational frequencies into the sets
$\Gamma_u$ with the noncommutative sampling principle.  The main
new ingredient is the estimate in
Lemma~\ref{lem:comparison-v2}, which connects the two approximations
$L_{2^k}^{(\le2^s)}$ and $M_{s,k}$ arising from
\cite{Bo881} and \cite{Bo89}, respectively.
Assuming Lemmas~\ref{lem:layer-v2} and
\ref{lem:comparison-v2}, we can now prove the main result of this
subsection.

\begin{proof}[Proof of Lemma~\ref{lem:L2decay-v2}]
For every $k\ge4^s$,
\begin{align*}
 \widehat K_{2^k}-\widehat M_{s,k}
 =(\widehat K_{2^k}-L_{2^k})
 +(L_{2^k}-L_{2^k}^{(\le2^s)})+(L_{2^k}^{(\le2^s)}-\widehat M_{s,k}).
\end{align*}
We estimate the three resulting sequences separately.

For the first one, \eqref{eq:scalar-error-v2} gives
\begin{align*}
\left\|
 \bigl(\mathcal T_{\widehat K_{2^k}-L_{2^k}}(f)
 \bigr)_{k\ge4^s}
 \right\|_{L_2(\Aalg;\ell_\infty)}
&\lesssim\left(\sum_{k\ge4^s}k^{-2\sigma}\right)^{1/2}\|f\|_2\\
&\lesssim2^{-(2\sigma-1)s}\|f\|_2.
\end{align*}
Next,
\[
 L_{2^k}-L_{2^k}^{(\le2^s)}
 =
 \sum_{\substack{u>2^s\\u\ {\rm dyadic}}}\Psi_{u,2^k}.
\]
By the triangle inequality,
Lemma \ref{lem:layer-v2} yields
\begin{align*}
&\left\|
 \bigl(\mathcal T_{L_{2^k}-L_{2^k}^{(\le2^s)}}(f)\bigr)_{k\ge4^s}
 \right\|_{L_2(\Aalg;\ell_\infty)}
 \\
&\qquad\lesssim\sum_{\substack{u>2^s\\u\ {\rm dyadic}}}u^{-\gamma}\|f\|_2
 =
 \sum_{j>s}2^{-\gamma j}\|f\|_2
 \lesssim2^{-\gamma s}\|f\|_2.
\end{align*}
The third sequence is bounded by Lemma \ref{lem:comparison-v2},
\[
 \left\|
 \bigl(\mathcal T_{L_{2^k}^{(\le2^s)}-\widehat M_{s,k}}(f)\bigr)_{k\ge4^s}
 \right\|_{L_2(\Aalg;\ell_\infty)}
 \lesssim2^{-\gamma's}\|f\|_2.
\]
Therefore, with
\[
 \delta=\min\{2\sigma-1,\gamma,\gamma'\}>0,
\]
the triangle inequality proves \eqref{eq:error-decay-v2}.

Finally, let $s\ge2$.  For every $k\ge4^s$,
\[
 \widehat M_{s,k}-\widehat M_{s-1,k}
 =
 (\widehat M_{s,k}-\widehat K_{2^k})
 +(\widehat K_{2^k}-\widehat M_{s-1,k}).
\]
Using \eqref{eq:error-decay-v2} first at level $s$ and then at level
$s-1$, and observing that
$\{k:k\ge4^s\}\subset\{k:k\ge4^{s-1}\}$, we obtain
\begin{align*}
&\left\|
 \bigl(f*(M_{s,k}-M_{s-1,k})\bigr)_{k\ge4^s}
 \right\|_{L_2(\Aalg;\ell_\infty)}
 \\
&\qquad\lesssim2^{-\delta s}\|f\|_2
 +2^{-\delta(s-1)}\|f\|_2
 \lesssim2^{-\delta s}\|f\|_2.
\end{align*}
This proves \eqref{eq:difference-decay-v2}.
\end{proof}

It remains to prove Lemmas~\ref{lem:layer-v2}
and~\ref{lem:comparison-v2}.

We collect below the scalar facts and auxiliary estimates needed for
their proofs.  They
follow from Bourgain's construction in
\cite[Sections~3 and~6]{Bo881}, the major-arc estimates in
\cite[Section~6]{Bo89}, and some elementary consequences of the
definitions above.

\begin{proposition}\label{prop:scalar-v2}
The parameters chosen above have the following properties:

\begin{enumerate}[label=(\roman*)]
\item[\rm{(i)}] If $\theta=a/q\in\Gamma_u$, then
$q\gtrsim u$ and $|S(\theta)|\lesssim u^{-\delta_d}$;

\item[\rm{(ii)}] $\mathcal Q_u|\supp\vartheta_u|\ll1$;

\item[\rm(iii)] For $0<\rho'<\rho$ sufficiently close to $\rho$, set
\[
 \gamma:=\delta_d-(1+\varepsilon-\rho')>0.
\]
Then the denominators $q\leq u^{1+\varepsilon}$ occurring in
$\Gamma_u$ can be partitioned into at most
$Cu^{1+\varepsilon-\rho'}$ blocks, and the least common multiple
$Q_\Lambda$ of each block satisfies
\[
 Q_\Lambda|\supp\varphi_u|\ll1.
\]
\end{enumerate}
\end{proposition}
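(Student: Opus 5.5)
The three assertions are elementary consequences of the definitions of $\mathcal Q_u$, $\varphi_u$, $\vartheta_u$ and of Proposition~\ref{Bo89-es1}(i). The plan is to prove each directly, taking $u$ larger than a threshold $u_0=u_0(P,\rho,\kappa,\varepsilon)$. The finitely many dyadic $u\le u_0$ only affect the implicit constants: they contribute a bounded number of layers in the later estimates.

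For (i), let $\theta=a/q\in\Gamma_u$. Then $q\mid\mathcal Q_u$ but $q\nmid\mathcal Q_{u/2}$. Since $q\mid (u!)^{A\lfloor\log u\rfloor}$, every prime factor $p$ of $q$ satisfies $p\le u$. Because $q\nmid\mathcal Q_{u/2}$, there is a prime power $p^e\,\|\,q$ with $p^e\nmid\mathcal Q_{u/2}$. Two cases arise.
\begin{itemize}
\item If $p>u/2$, then $q\ge p>u/2$.
\item If $p\le u/2$, then $p$ divides $(u/2)!$. Hence the exponent of $p$ in $\mathcal Q_{u/2}$ is at least $A\lfloor\log(u/2)\rfloor$. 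Therefore $e>A\lfloor\log(u/2)\rfloor$, and $q\ge p^e\ge 2^{A\lfloor\log (u/2)\rfloor}\gtrsim u$ once $A\ge2$.
\end{itemize}
In both cases $q\gtrsim u$. Since $q\in[2^s,2^{s+1})$ for some $s$ with $2^s\gtrsim u$, \eqref{eq:Sdec-v2} gives $|S(\theta)|\lesssim 2^{-s\delta_d}\lesssim u^{-\delta_d}$.

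For (ii), Stirling's formula gives
\[
\log\mathcal Q_u\le A\log u\,\log(u!)\lesssim_A u(\log u)^2 .
\]
On the other hand, $|\supp\vartheta_u|\le 2^{2-u^\kappa}$. Since $\kappa>1$, we have $u^\kappa\gg u(\log u)^2$. Hence $\mathcal Q_u|\supp\vartheta_u|\le 2^{Cu(\log u)^2+2-u^\kappa}$, which tends to $0$ and is as small as desired for $u\ge u_0$.

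For (iii), the relevant denominators are integers $q\le u^{1+\varepsilon}$, so there are at most $u^{1+\varepsilon}$ of them. List them in increasing order and cut the list into consecutive blocks of length $m=\lfloor u^{\rho'}\rfloor$. This produces at most $u^{1+\varepsilon}/m+1\le Cu^{1+\varepsilon-\rho'}$ blocks. For a block $\Lambda$, the least common multiple satisfies
\[
Q_\Lambda\le\prod_{q\in\Lambda}q\le u^{(1+\varepsilon)u^{\rho'}} .
\]
Hence
\[
\log_2\bigl(Q_\Lambda|\supp\varphi_u|\bigr)\le (1+\varepsilon)u^{\rho'}\log_2 u+1-u^\rho\longrightarrow-\infty,
\]
because $\rho'<\rho$. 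Finally, $\gamma=\delta_d-(1+\varepsilon-\rho')>0$ holds once $\rho'$ is close enough to $\rho$, by the standing assumption $1+\varepsilon-\rho<\delta_d$.

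The only step needing genuine care is (i), specifically the prime-power argument showing that a new denominator in $\Gamma_u$ cannot be small. This is where the exponent $A\lfloor\log u\rfloor$ in \eqref{eq:Qu-v2} is used. Parts (ii) and (iii) are routine size comparisons.
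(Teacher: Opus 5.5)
Your proof is correct and follows the paper's argument essentially step by step. For (ii) you use the same Stirling comparison $u(\log u)^2\ll u^\kappa$, and for (iii) the same grouping of the denominators $q\le u^{1+\varepsilon}$ into blocks of size $\lfloor u^{\rho'}\rfloor$ with $Q_\Lambda\le\prod_{q\in\Lambda}q$. The only difference is in (i), where the paper gets $q>u/2$ in one line: if $q\le u/2$ then $q\mid(u/2)!\mid\mathcal Q_{u/2}$. So your prime-power case split is valid but unnecessary, and the exponent $A\lfloor\log u\rfloor$ plays no role there beyond being at least $1$.
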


\begin{proof}
Let $\theta=a/q\in\Gamma_u$, with $(a,q)=1$.  By definition,
\[
 q\mid\mathcal Q_u,
 \qquad
 q\nmid\mathcal Q_{u/2}.
\]
Thus
$q>u/2$, and \eqref{eq:Sdec-v2} gives
\[
 |S(\theta)|
 \lesssim q^{-\delta_d}
 \lesssim u^{-\delta_d}.
\]
This proves (i).

Next, Stirling's formula gives
\[
 \log\mathcal Q_u
 =
 A\lfloor\log u\rfloor\log(u!)
 \lesssim u(\log u)^2.
\]
Since $\kappa>1$ and
$|\supp\vartheta_u|\lesssim2^{-u^\kappa}$, we obtain
\[
 \mathcal Q_u|\supp\vartheta_u|
 \lesssim
 \exp\!\bigl(Cu(\log u)^2-cu^\kappa\bigr)
 \ll1,
\]
which proves (ii).

For (iii), divide the denominators
$q\le u^{1+\varepsilon}$ occurring in $\Gamma_u$ into blocks, each
containing at most $\lfloor u^{\rho'}\rfloor$ denominators.  The
number of blocks is at most
\[
 Cu^{1+\varepsilon-\rho'}.
\]
If $\Lambda$ is one such block, then
\[
 Q_\Lambda
 \le
 \prod_{q\in\Lambda}q,
\]
and therefore
\[
 \log Q_\Lambda
 \le
 Cu^{\rho'}\log u.
\]
Since $\rho'<\rho$ and
$|\supp\varphi_u|\lesssim2^{-u^\rho}$,
\[
 Q_\Lambda|\supp\varphi_u|
 \lesssim
 \exp\!\bigl(Cu^{\rho'}\log u-cu^\rho\bigr)
 \ll1.
\]

\end{proof}

\begin{lemma}\label{lem:packet-v2}
Let $Q\ge1$ and $E\subset Q^{-1}\Z\cap \mathbb T$.  Let $\chi$ be smooth and
\[
 \supp\chi\subset\left(-\frac1{2Q},\frac1{2Q}\right).
\]
For scalars $(c_\theta)_{\theta\in E}\in \ell_\infty$, consider the symbol on $\mathbb T$
$$H_N(\alpha)=\sum_{\theta\in E}c_\theta w_N(\alpha-\theta) \chi(\alpha-\theta).$$
Then, for every $f\in L_2(\Aalg)$,
\begin{equation*}
 \|(\mathcal T_{H_N}f)_N\|_{L_2(\Aalg;\ell_\infty)}
 \lesssim \sup_{\theta\in E}|c_\theta|\,\|f\|_2.
\end{equation*}
\end{lemma}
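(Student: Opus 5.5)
The plan is to factor $H_N$ as a product of two multipliers. The first is a fixed $N$-independent multiplier that carries the coefficients $c_\theta$. The second is an $N$-dependent periodized multiplier to which the sampling principle (Lemma~\ref{lem:CHW-v2}) applies, exactly as in Lemmas~\ref{lem:cyclic-v2} and \ref{lem:weighted-v2}.

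\textbf{Step 1: factorization.} Choose a smooth $\widetilde\chi$ equal to one on $\supp\chi$ and supported in $(-\frac1{2Q},\frac1{2Q})$. The translates $\theta+\supp\widetilde\chi$, $\theta\in Q^{-1}\Z\cap\mathbb T$, are pairwise disjoint on $\mathbb T$. Hence
\[
 H_N(\alpha)=\Big[\sum_{\theta\in Q^{-1}\Z\cap\mathbb T} w_N(\alpha-\theta)\widetilde\chi(\alpha-\theta)\Big]\cdot b(\alpha),
 \qquad
 b(\alpha):=\sum_{\theta\in E}c_\theta\chi(\alpha-\theta),
\]
since only the diagonal terms survive.

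\textbf{Step 2: the $N$-independent factor.} At the $L_2$ level, Plancherel handles $b$ directly. The supports of the terms are disjoint, so $\|b\|_{L_\infty(\mathbb T)}\le\|\chi\|_\infty\sup_\theta|c_\theta|$. Because $\mathcal T_b$ acts only in the $\Z$-variable, it is bounded on $L_2(\Aalg)=L_2(\mathbb T)\otimes_2 L_2(\mathcal M)$ with the same norm. Thus $\|\mathcal T_bf\|_2\lesssim\sup_\theta|c_\theta|\,\|f\|_2$. This is the only place the coefficients enter, and it is why the statement is an $L_2$ statement: no $\ell_1$ kernel bound for $b$ is needed.

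\textbf{Step 3: the maximal factor.} The bracket is the symbol $\sum_{m\in\Z}m_N(Q\alpha-m)$ with $m_N(\xi)=w_N(\xi/Q)\widetilde\chi(\xi/Q)$. This function is supported in $(-1/2,1/2)$ because $\supp\widetilde\chi\subset(-\frac1{2Q},\frac1{2Q})$. For the continuous family $(T_{m_N})_N$ we use the following chain:
\begin{itemize}
\item Lemma~\ref{lem:continuous-v2} applies after the dilation $\xi\mapsto\xi/Q$. This dilation is harmless, since $w_N(\cdot/Q)$ corresponds to the averages along $P(y)/Q$, and the proof of Lemma~\ref{lem:continuous-v2} only uses monotonicity and growth of $P$. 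Equivalently, $L_p(\mathcal A_{\mathbb R}(\mathcal M);\ell_\infty)$ norms are dilation invariant up to the factor $Q^{1/p}$ on both sides.
\item This gives a uniform maximal bound for $(T_{w_N(\cdot/Q)}g)_N$.
\item Composing with the fixed multiplier $\widetilde\chi(\cdot/Q)$ costs $\|\widehat{\widetilde\chi}\|_{L_1}$ by Young. Writing $\widetilde\chi(\cdot/Q)=\widetilde\chi_1(\cdot)$ after rescaling, this constant is independent of $Q$.
\item Lemma~\ref{lem:CHW-v2} with $q=Q$ then gives
\[
 \|(\mathcal T_{G_N}h)_N\|_{L_2(\Aalg;\ell_\infty)}\lesssim\|h\|_2
\]
for the bracket symbols $G_N$.
\end{itemize}
Apply this with $h=\mathcal T_bf$ and combine with Step 2 to conclude.

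\textbf{Main obstacle.} The delicate point is keeping the constants independent of $Q$ and $E$. This is handled by the rescaling invariance of Lemma~\ref{lem:continuous-v2} and by the fact that the sampling principle's constant is independent of $q$. A further small point is that Lemma~\ref{lem:CHW-v2} requires the hypothesis for every semifinite $\mathcal M$, which Lemma~\ref{lem:continuous-v2} provides.
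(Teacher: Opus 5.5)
Your proposal is correct and is essentially the paper's argument: you factor $H_N$ into an $N$-independent multiplier carrying the coefficients, which is bounded on $L_2(\Aalg)$ by Plancherel, times the periodized multiplier controlled by the sampling principle (Lemma~\ref{lem:cyclic-v2} with $q=D=Q$, which you re-derive inline). The only cosmetic difference is that the paper keeps $\chi$ itself in the maximal factor $J_N$ and puts the coefficients on the step function $\Gamma=\sum_{\theta\in E}c_\theta\mathbf 1_{(\theta-\frac1{2Q},\theta+\frac1{2Q})}$, so no auxiliary cutoff $\widetilde\chi$ is needed and the constant depends only on the dilation-invariant quantity $\|\widehat\chi\|_{L_1}$.
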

\begin{proof}By  Lemma \ref{lem:cyclic-v2} with $q=D=Q$ there, we have
$$\|(\mathcal T_{J_N}f)_N\|_{L_2(\Aalg;\ell_\infty)}
 \lesssim  \|f\|_2,$$ with $J_N(\cdot)=\sum_{\theta\in Q^{-1}\mathbb Z \cap \mathbb T} w_N(\cdot-\theta) \chi(\cdot-\theta)$.
 The intervals $(u-\frac 1{2Q},u+\frac 1{2Q})$ are pairwise disjoint
  when $u\in Q^{-1}\mathbb Z$. Thus the Fourier multiplier on $\mathbb Z$
  with symbol $\Gamma=\sum_{\theta\in E} c_\theta 1_{(\theta-\frac 1{2Q},\theta+\frac 1{2Q})}$ is bounded on $L_2(\Aalg)$ by $\sup_{\theta\in E}|c_\theta|$. Since $J_N\Gamma=H_N$ by disjointness, the Lemma is proved.
\end{proof}

We now prove Lemma~\ref{lem:layer-v2} by applying
Lemma~\ref{lem:packet-v2} separately to the two sums defining
$\Psi_{u,N}$.

\begin{proof}[Proof of Lemma~\ref{lem:layer-v2}]
Write
\[
 \Psi_{u,N}=A_{u,N}+B_{u,N},
\]
where
\begin{align*}
 A_{u,N}(\alpha)
 &=
 \sum_{\theta=a/q\in\Gamma_u}
 S(\theta)w_N(\alpha-\theta)
 \vartheta_u(\alpha-\theta),\\
 B_{u,N}(\alpha)
 &=
 \sum_{\substack{\theta=a/q\in\Gamma_u\\
                  q\le u^{1+\varepsilon}}}
 S(\theta)w_N(\alpha-\theta)
 (\varphi_u-\vartheta_u)(\alpha-\theta).
\end{align*}

For every $\theta=a/q\in\Gamma_u$, one has
$q\mid\mathcal Q_u$.  Thus all the centers occurring in
$A_{u,N}$ belong to
$\mathcal Q_u^{-1}\mathbb Z\cap\mathbb T$.  Moreover,
Proposition~\ref{prop:scalar-v2}(ii) ensures that the cutoff may be
chosen so that
\[
 \supp\vartheta_u
 \subset
 \left(
 -\frac{1}{2\mathcal Q_u},
 \frac{1}{2\mathcal Q_u}
 \right).
\]
Since Proposition~\ref{prop:scalar-v2}(i) gives
\[
 \sup_{\theta\in\Gamma_u}|S(\theta)|
 \lesssim u^{-\delta_d},
\]
Lemma~\ref{lem:packet-v2}, applied with
$Q=\mathcal Q_u$, yields
\begin{equation}\label{eq:narrow-v2}
 \left\|
 \bigl(\mathcal T_{A_{u,N}}(f)\bigr)_N
 \right\|_{L_2(\Aalg;\ell_\infty)}
\lesssim u^{-\delta_d}\|f\|_2.
\end{equation}

For $B_{u,N}$, let $\Lambda$ be one of the denominator blocks from
Proposition~\ref{prop:scalar-v2}(iii), and define
\[
 B_{u,N}^{\Lambda}(\alpha)
 =
 \sum_{\substack{\theta=a/q\in\Gamma_u\\q\in\Lambda}}
 S(\theta)w_N(\alpha-\theta)
 (\varphi_u-\vartheta_u)(\alpha-\theta).
\]
Then
\[
 B_{u,N}
 =
 \sum_{\Lambda}B_{u,N}^{\Lambda}.
\]
If $Q_\Lambda$ denotes the least common multiple of the denominators
in $\Lambda$, then all the centers occurring in
$B_{u,N}^{\Lambda}$ belong to
$Q_\Lambda^{-1}\mathbb Z\cap\mathbb T$.  Furthermore,
Proposition~\ref{prop:scalar-v2}(iii) gives
\[
 \supp(\varphi_u-\vartheta_u)
 \subset
 \left(
 -\frac{1}{2Q_\Lambda},
 \frac{1}{2Q_\Lambda}
 \right).
\]
Therefore, as before with Lemma~\ref{lem:packet-v2} gives
\[
 \left\|
 \bigl(\mathcal T_{B_{u,N}^{\Lambda}}(f)\bigr)_N
 \right\|_{L_2(\Aalg;\ell_\infty)}
 \lesssim u^{-\delta_d}\|f\|_2.
\]
There are at most
$Cu^{1+\varepsilon-\rho'}$ such blocks.  Hence the triangle
inequality yields
\begin{align}
 \left\|
 \bigl(\mathcal T_{B_{u,N}}(f)\bigr)_N
 \right\|_{L_2(\Aalg;\ell_\infty)}
 &\lesssim u^{1+\varepsilon-\rho'-\delta_d}\|f\|_2
 \notag\\
 &=u^{-\gamma}\|f\|_2.
 \label{eq:wide-v2}
\end{align}
Since $0<\gamma<\delta_d$, combining
\eqref{eq:narrow-v2} and \eqref{eq:wide-v2} proves
\eqref{eq:layer-v2}.
\end{proof}
We now prove Lemma~\ref{lem:comparison-v2}.

\begin{proof}[Proof of Lemma~\ref{lem:comparison-v2}]
For clarity, we divide the proof into two parts.  First, we estimate
the contribution of the rational centers which occur in
$L_{2^k}^{(\le2^s)}$ but not in $\widehat M_{s,k}$, using the decay
of $S(\theta)$ and Lemma~\ref{lem:packet-v2}.  We then estimate the
error caused by the different cutoffs at the common centers, using
the separation of the rational centers, the support properties of
the cutoffs, and the decay estimate \eqref{eq:wdec-v2}.

\medskip

\emph{Step 1: the contribution of the additional rational centers.}
We first identify and estimate the rational centers which occur in the
sum defining $L_{2^k}^{(\le2^s)}$ but not in $\widehat M_{s,k}$.  Set
\[
 \Theta_s
 =
 \left\{
 \frac aq\in\Q\cap\mathbb T:(a,q)=1,\ q\mid\mathcal Q_{2^s}
 \right\}, \quad\mathcal Q_{2^s}=(2^s)!^{A\lfloor\log 2^s\rfloor}
\]
and
\[
 \Lambda_s
 =
 \left\{
 \frac aq\in\Q\cap\mathbb T:(a,q)=1,\ q\mid Q_s
 \right\},
 \qquad Q_s=(2^s)!.
\]
By the definition of the sets $\Gamma_u$,
\[
 \{0\}\cup
 \bigcup_{\substack{u\le2^s\\u\ {\rm dyadic}}}\Gamma_u
 =\Theta_s.
\]
Thus $\Theta_s$ is the set of centers occurring in
$L_{2^k}^{(\le2^s)}$, whereas $\Lambda_s$ is the set of centers
occurring in $\widehat M_{s,k}$.  Since
\[
 Q_s=(2^s)!\mid\mathcal Q_{2^s},
\]
we have $\Lambda_s\subset\Theta_s$.

When $a/Q_s$ reduces to $a'/q$, the definition of the exponential
sum gives
\[
 S(a/Q_s)=S(a'/q).
\]
Consequently, the sum defining $\widehat M_{s,k}$ may be indexed by the reduced
fractions in $\Lambda_s$, with coefficient $S(\theta)$ at the center
$\theta$.  It remains in this step to estimate the centers in
$\Theta_s\setminus\Lambda_s$.

Let $\theta=a/q\in\Theta_s\setminus\Lambda_s$ be written in reduced
form.  Then
\[
 q\mid\mathcal Q_{2^s},
 \qquad
 q\nmid(2^s)!.
\]
For a prime $\ell$ and a positive integer $n$, we write
\[
 v_\ell(n)
 =
 \max\{\nu\in\mathbb N\cup\{0\}:\ell^\nu\mid n\}.
\]
Thus $v_\ell(n)$ is the exponent of $\ell$ in the prime
factorization of $n$.  We shall also use the standard formula
\[
 v_\ell(n!)
 =
 \sum_{\nu\ge1}
 \left\lfloor\frac{n}{\ell^\nu}\right\rfloor.
\]

Since $q|\mathcal Q_{2^s}$ its prime factor are smaller than $2^s$, so
if $q\nmid(2^s)!$, there is a prime $\ell\leq2^s$ such that
\[
 v_\ell(q)>v_\ell((2^s)!).
\]
We claim that this implies
\[
 4^s\lesssim q.
\]
If $\ell>2^{s-1}$, then $v_\ell((2^s)!)=1$, and therefore
$q\ge\ell^2\ge2^{2s-2}$.  If $\ell\le2^{s-1}$, put
\[
 m=\left\lfloor\frac{2^s}{\ell}\right\rfloor\ge2.
\]
Since $v_\ell((2^s)!)\ge m$, the integer $q$ is divisible by
$\ell^{m+1}$.  Moreover, $2^s<(m+1)\ell$, and hence
\[
 \frac{q}{4^s}
 \ge
 \frac{\ell^{m+1}}{4^s}
 >
 \frac{\ell^{m-1}}{(m+1)^2}
 \ge
 \frac{2^{m-1}}{(m+1)^2}
 \ge
 \frac{2}{9}.
\]
This proves the claim.

It follows from \eqref{eq:Sdec-v2} that every such center satisfies
\[
 |S(\theta)|
 \lesssim q^{-\delta_d}
 \lesssim 2^{-2\delta_ds}.
\]
Moreover, these centers can occur only in the first part of
$\Psi_{u,N}$.  Indeed, a center in its second part must satisfy
\[
 q\le u^{1+\varepsilon}
 \le2^{(1+\varepsilon)s},
\]
which is incompatible with $q\ge c2^{2s}$ because
$\varepsilon<1$.  The finitely many small values of $s$ are absorbed
into the constant.

Let
\[
 e_{s,k}(\alpha)
 =
 \sum_{\substack{u\le2^s\\u\ {\rm dyadic}}}
 \sum_{\theta\in\Gamma_u\cap
       (\Theta_s\setminus\Lambda_s)}
 S(\theta)w_{2^k}(\alpha-\theta)
 \vartheta_u(\alpha-\theta).
\]
For each dyadic $u\le2^s$, Lemma~\ref{lem:packet-v2}
applies to the terms indexed by
$\Gamma_u\cap(\Theta_s\setminus\Lambda_s)$.
There are at most $s+1$ such values of $u$.  Therefore
\[
 \left\|
 \bigl(\mathcal T_{e_{s,k}}(f)\bigr)_{k\ge4^s}
 \right\|_{L_2(\Aalg;\ell_\infty)}
 \lesssim(1+s)2^{-2\delta_ds}\|f\|_2
 \lesssim2^{-\delta_ds}\|f\|_2.
\]

\medskip

\emph{Step 2: the cutoff error at the common centers.}
For $\theta=a/q\in\Gamma_u$, let
\[
 \eta_\theta(\beta)
 =
 \begin{cases}
 \varphi_u(\beta),&q\le u^{1+\varepsilon},\\
 \vartheta_u(\beta),&q>u^{1+\varepsilon}.
 \end{cases}
\]
At the origin we put $\eta_0=\chi_0$.  Then the contribution of the
common centers to the difference of the two multipliers is
\[
 d_{s,k}(\alpha)
 =
 \sum_{\theta\in\Lambda_s}
 S(\theta)w_{2^k}(\alpha-\theta)
 \left[
 \eta_\theta(\alpha-\theta)
 -
 \phi\!\left(Q_s^2(\alpha-\theta)\right)
 \right].
\]
By the description of the two sets of centers in Step~1 and the
definition of $e_{s,k}$, this gives the exact identity
\begin{equation}\label{eq:comparison-decomposition-v2}
 L_{2^k}^{(\le2^s)}-\widehat M_{s,k}
 =e_{s,k}+d_{s,k}.
\end{equation}

We next determine the support of the cutoff difference
\[
 \eta_\theta(\beta)-\phi(Q_s^2\beta).
\]  Since $u\le2^s$, the definitions of $\varphi_u$ and
$\vartheta_u$ imply that
\[
 \eta_\theta(\beta)=1
 \qquad\text{whenever}\qquad
 |\beta|\lesssim2^{-2^{\kappa s}}.
\]
Also,
\[
 Q_s^{-2}=((2^s)!)^{-2}
 =2^{-O(s2^s)}
 \gg2^{-2^{\kappa s}},
\]
because $\kappa>1$.  Since $\phi$ equals one near zero, after
decreasing $c>0$ if necessary,
\[
 \phi(Q_s^2\beta)=1
 \qquad\text{whenever}\qquad
 |\beta|\lesssim2^{-2^{\kappa s}}.
\]
The same assertion holds for the fixed cutoff $\chi_0$ at the
origin.  Consequently,
\[
 \eta_\theta(\beta)-\phi(Q_s^2\beta)=0
 \qquad\text{if}\qquad
 |\beta|\lesssim2^{-2^{\kappa s}}.
\]
Thus the cutoff error is supported where
\[
 |\beta|\lesssim2^{-2^{\kappa s}}.
\]
For $k\ge4^s$, estimate \eqref{eq:wdec-v2} therefore gives
\[
 |w_{2^k}(\beta)|
 \lesssim2^{-k}|\beta|^{-1/d}
 \lesssim2^{-k+C_1 2^{\kappa s}}
\]
on the support of the cutoff error.

For each common center $\theta\in\Lambda_s$, define
\[
 p_{\theta,k}(\alpha)
 =
 S(\theta)w_{2^k}(\alpha-\theta)
 \left[
 \eta_\theta(\alpha-\theta)
 -
 \phi\!\left(Q_s^2(\alpha-\theta)\right)
 \right].
\]
Then
\[
 d_{s,k}=\sum_{\theta\in\Lambda_s}p_{\theta,k}.
\]
By the preceding estimate,
\[
 \|p_{\theta,k}\|_\infty
 \lesssim 2^{-k+C_1 2^{\kappa s}}.
\]
We next estimate
\[
 d_{s,k}=\sum_{\theta\in\Lambda_s}p_{\theta,k}.
\]
Since the preceding bound applies to each complete difference
$p_{\theta,k}$, it remains to control how many of these terms can be
nonzero at a fixed point $\alpha\in\Torus$.

For $\alpha\in\Torus$, set
\[
 \mathcal E_s(\alpha)
 =
 \left\{
 \theta\in\Lambda_s:
 \eta_\theta(\alpha-\theta)\neq0
 \right\}
\]
and
\[
 \mathcal F_s(\alpha)
 =
 \left\{
 \theta\in\Lambda_s:
 \phi\!\left(Q_s^2(\alpha-\theta)\right)\neq0
 \right\}.
\]
We first estimate the cardinality of $\mathcal E_s(\alpha)$.  Fix a
dyadic number $u\leq2^s$.  For the centers
$\theta=a/q\in\Gamma_u$ with $q>u^{1+\varepsilon}$, we have
$\eta_\theta=\vartheta_u$.  These centers can be written with the
common denominator $\mathcal Q_u$, and
\[
 \mathcal Q_u|\supp\vartheta_u|\ll1.
\]
Consequently, the corresponding translates of $\vartheta_u$ are
pairwise disjoint, so at most one of them can be nonzero at a given
point $\alpha$.
For the centers with $q\leq u^{1+\varepsilon}$, we have
$\eta_\theta=\varphi_u$.  By Proposition
\ref{prop:scalar-v2}(iii), these centers can be divided into at most
$Cu^{1+\varepsilon-\rho'}$ blocks.  Inside each block the centers can
be written with a common denominator $Q_\Lambda$, and
\[
 Q_\Lambda|\supp\varphi_u|\ll1.
\]
Hence the corresponding translates of $\varphi_u$ are pairwise
disjoint inside each block.  Thus, at any fixed $\alpha$, at most
$Cu^{1+\varepsilon-\rho'}$ such terms are nonzero.  Including the
single center at the origin, we obtain
\begin{align}
 \#\mathcal E_s(\alpha)
 \le
 1+
 C\sum_{\substack{u\leq2^s\\u\ {\rm dyadic}}}
 \left(1+u^{1+\varepsilon-\rho'}\right)
 \lesssim 2^{C_0s}
 \label{eq:eta-overlap-v2}
\end{align}
for some $C_0>0$, uniformly in $\alpha$.

On the other hand, the elements of $\Lambda_s$ belong to
$Q_s^{-1}\mathbb Z\cap\mathbb T$, and hence two distinct centers are
separated by at least $Q_s^{-1}$.  Since
\[
 \phi\!\left(Q_s^2(\alpha-\theta)\right)
\]
is supported in an interval of length $O(Q_s^{-2})$, these
translated cutoffs are pairwise disjoint.  Therefore,
\begin{equation}\label{eq:phi-overlap-v2}
 \#\mathcal F_s(\alpha)\leq1.
\end{equation}
Since
\[
 \supp p_{\theta,k}
 \subset
 \supp\bigl(\eta_\theta(\,\cdot-\theta)\bigr)
 \cup
 \supp\bigl(\phi(Q_s^2(\,\cdot-\theta))\bigr),
\]
we have
\[
 \left\{
 \theta\in\Lambda_s:p_{\theta,k}(\alpha)\neq0
 \right\}
 \subset
 \mathcal E_s(\alpha)\cup\mathcal F_s(\alpha).
\]
It follows from \eqref{eq:eta-overlap-v2},
\eqref{eq:phi-overlap-v2}, and the estimate for
$\|p_{\theta,k}\|_\infty$ that
\begin{align*}
 |d_{s,k}(\alpha)|
 \le
 \sum_{\substack{\theta\in\Lambda_s
                  p_{\theta,k}(\alpha)\neq0}}
 |p_{\theta,k}(\alpha)|\lesssim2^{C_0s}2^{-k+C_1 2^{\kappa s}}.
\end{align*}
Taking the supremum over $\alpha\in\Torus$, we conclude that
\begin{equation*}
 \|d_{s,k}\|_\infty
 \lesssim2^{C_0s}2^{-k+C_1 2^{\kappa s}}.
\end{equation*}
Therefore,
\begin{align*}
\left\|
 \bigl(\mathcal T_{d_{s,k}}(f)\bigr)_{k\geq 4^s}
 \right\|_{L_2(\Aalg;\ell_\infty)}
&\qquad\lesssim\left(
 \sum_{k\geq 4^s}
 \|\F^{-1}[d_{s,k}\widehat f]\|_2^2
 \right)^{1/2}\\
&\qquad\lesssim2^{C_0s+C_1 2^{\kappa s}}
 \left(\sum_{k\ge4^s}2^{-2k}\right)^{1/2}\|f\|_2\\
&\qquad\lesssim2^{C_0s+C_1 2^{\kappa s}-4^s}\|f\|_2.
\end{align*}
Since $\kappa<2$, we have $2^{\kappa s}=o(4^s)$.  It follows that
\[
 2^{C_0s+C_1 2^{\kappa s}-4^s}
 \lesssim2^{-C_{-1} s
}\]
for some $C_{-1}>0$. We obtain
\[
 \left\|
 \bigl(\mathcal T_{d_{s,k}}(f)\bigr)_{k\ge4^s}
 \right\|_{L_2(\Aalg;\ell_\infty)}
 \lesssim2^{-C_{-1}s}\|f\|_2.
\]

\medskip

\emph{Step 3: conclusion.}
By \eqref{eq:comparison-decomposition-v2}, the triangle inequality
gives
\begin{align*}
 &\left\|
 \left(
 \mathcal T_{L_{2^k}^{(\leq2^s)}-\widehat M_{s,k}}f
 \right)_{k\geq4^s}
 \right\|_{L_2(\Aalg;\ell_\infty)}
 \\
 &\qquad\lesssim \left(2^{-\delta_ds}+2^{-C_{-1}s}\right)\|f\|_2
\lesssim2^{-\gamma's}\|f\|_2,
\end{align*}
where
\[ \gamma'=\min\{\delta_0,C_{-1}\}>0.
\]
This proves \eqref{eq:comparison-v2}.
\end{proof}

We have now obtained both the uniform $L_p$-maximal estimate and the
decaying $L_2$-approximation estimates for the major-arc
multipliers.  These estimates will be used in the next section to
complete the proof of the main theorem.

\section{Proof of the main result--Theorem~\ref{thm1} }\label{sec:proof of main result}

With all the preparatory work in the previous sections, we are now able to conclude the main result by following Bourgain's strategy.

\begin{proof}[Proof of Theorem~\ref{thm1}]
We first prove the maximal inequality \eqref{mthm1}.
 It suffices to prove
\eqref{thm11}, namely,
\begin{equation}\label{thm111}
   \big\|(f*K_{2^k})_{k\in\mathbb N}\big\|_
   {L_p(\mathcal A_{\mathbb Z}(\mathcal N);\ell_\infty)}
   \lesssim_{p} \|f\|_{L_p(\mathcal A_{\mathbb Z}(\mathcal N))}.
\end{equation}

Since each convolution operator $f\mapsto f*K_{2^k}$ is positive,
the standard decomposition of an arbitrary element into four
positive elements reduces the proof to the case $f\geq0$.  We first
consider $1<p<2$.  The finitely many indices $k<4$ are bounded by
$C\|f\|_p$, since $\|f*K_{2^k}\|_p\leq\|f\|_p$.  Thus it remains to
consider $k\geq4$.

Given $k\in[4^s,4^{s+1})$, where $s$ is a positive integer,
\begin{equation*}
    f*K_{2^k}=\Big(f*K_{2^k}-f*M_{s,k}\Big)+\sum_{s^\prime=1}^sf*\Big(M_{s^\prime,k}-M_{s^\prime-1,k}\Big).
\end{equation*}
Here, $M_{0,k}:=0$.
For $k\ge4$, put $s(k)=\lfloor\log_4k\rfloor$. Then
$$\Big(\sum_{s^\prime=1}^{s(k)}f*\big(M_{s^\prime,k}-M_{s^\prime-1,k}\big)\Big)_{k\geq 4}=\sum_{s^\prime=1}^{\infty}\Big(1_{k\geq 4^{s'}}f*\big(M_{s^\prime,k}-M_{s^\prime-1,k}\big)\Big)_{k\geq 4}.$$
Consequently,
\begin{align}\label{Key-es2}
    \big\|(f*K_{2^k})_{k\in\mathbb N}\big\|_
    {L_p(\mathcal A_{\mathbb Z}(\mathcal N);\ell_\infty)}
    \lesssim(I)+(II),
    \end{align}
where
\begin{equation*}
    (I)=\sum_{s=1}^\infty
    \big\|\big(f*(K_{2^k}-M_{s,k})\big)_{4^s\leq k<4^{s+1}}\big\|_
    {L_p(\mathcal A_{\mathbb Z}(\mathcal N);\ell_\infty)}
\end{equation*}
and
\begin{equation*}
    (II)=\sum_{s^\prime=1}^\infty
    \big\|\big(f*(M_{s^\prime,k}-M_{s^\prime-1,k})\big)_{k\ge4^{s^\prime}}\big\|_
    {L_p(\mathcal A_{\mathbb Z}(\mathcal N);\ell_\infty)}.
\end{equation*}
Choose $p_0$ such that $1<p_0<p$.  By Lemma \ref{lem:M-p-v2},
\begin{equation}\label{Key-es3}
    \big\|(f*M_{s^\prime,k})_{k\ge4^{s^\prime}}\big\|_
    {L_{p_0}(\mathcal A_{\mathbb Z}(\mathcal N);\ell_\infty)}
    \lesssim\|f\|_{p_0}.
\end{equation}Hence, by the triangle inequality and \eqref{Key-es3},
\begin{equation}\label{Key-es5}
    \big\|\big(f*(M_{s^\prime,k}-M_{s^\prime-1,k})\big)_{k\ge4^{s^\prime}}\big\|_
    {L_{p_0}(\mathcal A_{\mathbb Z}(\mathcal N);\ell_\infty)}
    \lesssim\|f\|_{p_0}.
\end{equation}
The interval $[4^s,4^{s+1})$ is the union of
$[4^s,2\cdot4^s)$ and $[2\cdot4^s,4\cdot4^s)$.  Applying
Proposition \ref{Key1-es} with $N_0=2^{4^s}$ and
$N_0^2=2^{2\cdot4^s}$, respectively, and absorbing the endpoints into
the constant, we obtain the following estimate, since
$\log\log(2^{4^s})\simeq s$:
\begin{equation}\label{Key-es4}
    \big\|(f*K_{2^k})_{4^s\leq k<4^{s+1}}\big\|_
    {L_{p_0}(\mathcal A_{\mathbb Z}(\mathcal N);\ell_\infty)}
    \lesssim s\|f\|_{p_0}.
\end{equation}
Moreover, \eqref{Key-es3} and \eqref{Key-es4} give
\begin{equation}\label{Key-es6}
    \big\|\big(f*(K_{2^k}-M_{s,k})\big)_{4^s\leq k<4^{s+1}}\big\|_
    {L_{p_0}(\mathcal A_{\mathbb Z}(\mathcal N);\ell_\infty)}
    \lesssim s\|f\|_{p_0}.
\end{equation}
By Lemma \ref{lem:L2decay-v2}, for $s^\prime\geq2$ we have
\begin{equation}\label{Key-es7}
   \big\|\big(f*(M_{s^\prime,k}-M_{s^\prime-1,k})\big)_{k\ge4^{s^\prime}}\big\|_
   {L_2(\mathcal A_{\mathbb Z}(\mathcal N);\ell_\infty)}
   \lesssim2^{-\delta s^\prime}\|f\|_{2}.
\end{equation}
The same lemma also gives, for every $s\geq1$,
\begin{equation}\label{Key-es8}
     \big\|\big(f*(K_{2^k}-M_{s,k})\big)_{4^s\leq k<4^{s+1}}\big\|_
     {L_2(\mathcal A_{\mathbb Z}(\mathcal N);\ell_\infty)}
     \lesssim 2^{-\delta s}\|f\|_{2}.
\end{equation}
The case $s^\prime=1$ in \eqref{Key-es7} follows from Lemma
\ref{lem:M-p-v2} with $p=2$, after increasing the implicit constant.

Let $0<\theta<1$ be determined by
\[
 \frac1p=\frac{1-\theta}{p_0}+\frac\theta2.
\]
Interpolating \eqref{Key-es5} with \eqref{Key-es7}, we obtain
\begin{equation}\label{Key-es9}
   \big\|\big(f*(M_{s^\prime,k}-M_{s^\prime-1,k})\big)_{k\ge4^{s^\prime}}\big\|_
   {L_p(\mathcal A_{\mathbb Z}(\mathcal N);\ell_\infty)}
   \lesssim2^{-\delta_ps^\prime}\|f\|_{p}.
\end{equation}
Similarly, interpolation between \eqref{Key-es6} and
\eqref{Key-es8} first gives
\begin{equation}\label{Key-es10}
     \big\|\big(f*(K_{2^k}-M_{s,k})\big)_{4^s\leq k<4^{s+1}}\big\|_
     {L_p(\mathcal A_{\mathbb Z}(\mathcal N);\ell_\infty)}
     \lesssim s^{1-\theta}2^{-\theta\delta s}\|f\|_{p}
     \lesssim 2^{-\delta_ps}\|f\|_{p},
\end{equation}
where one may take any $0<\delta_p<\theta\delta$.  Combining
\eqref{Key-es2}, \eqref{Key-es9}, and \eqref{Key-es10}, and summing
the resulting geometric series, we deduce
\begin{equation*}
     \big\|(f*K_{2^k})_{k\in\mathbb N}\big\|_
     {L_p(\mathcal A_{\mathbb Z}(\mathcal N);\ell_\infty)}
     \lesssim \|f\|_p.
\end{equation*}

For $p=2$, the same conclusion follows directly from
\eqref{Key-es2}, \eqref{Key-es7}, and \eqref{Key-es8}.  Finally, for
$2<p<\infty$, interpolate the $L_2$ estimate with the evident bound
\[
 \big\|(f*K_{2^k})_{k\in\mathbb N}\big\|_
 {L_\infty(\mathcal A_{\mathbb Z}(\mathcal N);\ell_\infty)}
 \leq \|f\|_\infty.
\]
This proves \eqref{thm111} for every $1<p<\infty$, and hence
establishes \eqref{mthm1}.

Since the maximal inequality holds in $L_p$, to prove the b.a.u. convergence for $f\in L_p$, it suffices to do it only when $f\in L_2$.  To do so, by the  the discussion in
\cite[Section~4.1]{HongWang+arXiv2026}, it suffices to prove an analog of arguments of
\cite[Proposition~4.4]{HongWang+arXiv2026}. More precisely,  it suffices to show that for  fixed $0<\varepsilon<1/4$, there is a sequence $c_J\to 0$ such that  with
\[
 \mathbb D_\varepsilon
 =
 \{\lfloor(1+\varepsilon)^k\rfloor:k\geq1\},
 \qquad
 I_i=\mathbb D_\varepsilon\cap[N_i,N_{i+1}),
\]
where $N_i\in\mathbb D_\varepsilon$ and $N_{i+1}>2N_i$ and any $f\in L_2$:
\begin{equation}\label{eq:quantitative-convergence}
 \frac1{J+1}\sum_{i=0}^J
 \left\|
 \bigl(f*K_N-f*K_{N_i}\bigr)_{N\in I_i}
 \right\|_{L_2(\mathcal A_{\mathbb Z}(\mathcal N);\ell_\infty)}^2
 \leq c_J\|f\|_2^2.
\end{equation}

Fix $s\geq1$. Decompose, using \eqref{eq:LN-v2} and \eqref{eq:defLNs},
\[
 \widehat K_N
 =
 (\widehat K_N-L_N)
 +L_N^{(\leq2^s)}
 +(L_N-L_N^{(\leq2^s)}).
\]
Let $T_N^{(1)}$, $T_{s,N}^{(2)}$, and $T_{s,N}^{(3)}$
be the operators with these three multipliers, respectively.
Then
\[
 f*K_N
 =
 T_N^{(1)}f+T_{s,N}^{(2)}f+T_{s,N}^{(3)}f.
\]
For the first term, \eqref{eq:scalar-error-v2},
and Plancherel's theorem give
\begin{align*}
 \sum_{i=0}^J
 \left\|
 \bigl(T_N^{(1)}f-T_{N_i}^{(1)}f\bigr)_{N\in I_i}
 \right\|_{L_2(\mathcal A_{\mathbb Z}(\mathcal N);\ell_\infty)}^2
 \lesssim_{\varepsilon,P}
 \left(
 \sum_{N\in\mathbb D_\varepsilon}(\log N)^{-2\sigma}
 \right)\|f\|_2^2
 \lesssim_P\|f\|_2^2,
\end{align*}
since $\sigma>1/2$.
For the third term, summing the bounds in
Lemma~\ref{lem:layer-v2} over dyadic $u>2^s$ yields
\[
 \sum_{i=0}^J
 \left\|
 \bigl(T_{s,N}^{(3)}f-T_{s,N_i}^{(3)}f\bigr)_{N\in I_i}
 \right\|_{L_2(\mathcal A_{\mathbb Z}(\mathcal N);\ell_\infty)}^2
 \lesssim_P
 (J+1)2^{-2\gamma s}\|f\|_2^2.
\]
It remains to estimate $T_{s,N}^{(2)}$. Its multiplier is a
finite sum of terms
\[
 S(\theta)w_N(\alpha-\theta)\chi(\alpha-\theta),
\]
where $\theta$ and $\chi$ are independent of $N$.
We compare $w_N$ with the corresponding expression for the
leading monomial $a_dt^d$:
\[
 w_N^{(0)}(\beta)
 :=
 \frac1N\int_0^N
 \exp(-2\pi i\beta a_dt^d)\,dt.
\]
 Estimate \eqref{eq:wdec-v2} gives
\[
 |w_N(\beta)-w_N^{(0)}(\beta)|
 \lesssim_P
 \min\bigl\{
 |\beta|N^{d-1},(N^d|\beta|)^{-1/d}
 \bigr\},
 \qquad \beta\neq0.
\]
Consequently,
\[
 \|w_N-w_N^{(0)}\|_\infty
 \lesssim_P N^{-1/(d+1)}.
\]
For each fixed $\theta$ and $\chi$,
\cite[Corollary~4.8 and Remark~4.9]{HongWang+arXiv2026}
gives the quantitative maximal estimate over the intervals
$I_i$ for the multiplier
\[
 S(\theta)w_N^{(0)}(\alpha-\theta)\chi(\alpha-\theta),
\]
with a constant independent of $J$ and $(N_i)$. For the difference $w_N-w_N^{(0)}$, the same
calculation as for $T_N^{(1)}$, we have
\[
 \left(
 \sum_{N\in\mathbb D_\varepsilon}
 \|w_N-w_N^{(0)}\|_\infty^2
 \right)\|f\|_2^2
 \lesssim_{P}
 \left(
 \sum_{N\in\mathbb D_\varepsilon}N^{-2/(d+1)}
 \right)\|f\|_2^2
 \lesssim_{\varepsilon,P}\|f\|_2^2.
\]
Combining this bound with the estimate for $w_N^{(0)}$
and summing over the finitely many terms, we obtain
\[
 \sum_{i=0}^J
 \left\|
 \bigl(T_{s,N}^{(2)}f-T_{s,N_i}^{(2)}f\bigr)_{N\in I_i}
 \right\|_{L_2(\mathcal A_{\mathbb Z}(\mathcal N);\ell_\infty)}^2
 \lesssim_{\varepsilon,P,s} \|f\|_2^2,
\]
where the constant is independent of $J$ and $(N_i)$.

Combining these three bounds, we obtain
\[
 \frac1{J+1}\sum_{i=0}^J
 \left\|
 \bigl(f*K_N-f*K_{N_i}\bigr)_{N\in I_i}
 \right\|_{L_2(\mathcal A_{\mathbb Z}(\mathcal N);\ell_\infty)}^2
 \leq
 \left(
 \frac{C_{\varepsilon,P,s}}{J+1}
 +C_P2^{-2\gamma s}
 \right)\|f\|_2^2.
\]
Letting $J\to\infty$ and then $s\to\infty$ proves
\eqref{eq:quantitative-convergence}.
The desired b.a.u. convergence now follows from the discussion in
\cite[Section~4.1]{HongWang+arXiv2026}.
This completes the proof.
\end{proof}

\appendix

\section{A noncommutative Burkholder's theorem for $p=1$}

Burkholder in \cite{Bur64} obtained a probabilistic analogue of Stein's result.
For this one, it is possible to extend it to noncommutative setting for $p=1$.
What is challenging again here is to find a suitable tool to replace the Borel-Cantelli Lemma. We follow closely \cite[Theorem 1]{Bur64} .

Two noncommutative probability spaces $(\mathcal M_1,\tau_1)$ and
$(\mathcal M_2,\tau_2)$ sitting in a bigger one $(\mathcal M,\tau)$
(in particular $\tau_{|\mathcal M_i}=\tau_i$, so we skip the indices
for the traces) are said to be independent if for all
$x_1\in L_1(\mathcal M_1)$ and $x_2\in L_2(\mathcal M_2)$, we have
$\tau(x_1x_2)=\tau(x_1)\tau(x_2)$. The obvious way to make two spaces
independent is to take their tensor product or reduced free product. A
sequence of noncommutative probability spaces $(\mathcal M_n)_n$ of
$(\mathcal M,\tau)$ is independent if, for all $n$, $\mathcal M_{n+1}$
is independent with the algebra generated by $\mathcal M_i$'s for $i\leq n$.

Given a sequence $F=(f_n)$ in a noncommutative probability space (more precisely in $L_1$), we
write $(\mathcal M_F, \tau_F)$ for the algebra it generates and
$\tau_F$ is just the restriction of the ambiant trace. For two
sequences $F=(f_n)$ and $G=(f_n)$ in possibly different noncommutative probability spaces, we say they have the same distribution
and write $F\sim G$ if there is a trace preserving isomorphism $\pi$ from
$(\mathcal M_F, \tau_F)$ into $(\mathcal M_G, \tau_G)$ with $\pi(f_n)=g_n$ (with the natural extension to $L_1$).

We denote by $\mathcal D$ the collection of all positive integrable sequences
$(f_n)_n$ over noncommutative probability spaces (that is
$L_1(\mathcal M,\tau)_+$, where $(\mathcal M,\tau)$ is finite). A
collection $\mathcal C\subset D$ is stochastically convex if $F_k=(f_{k,n})_n$ is a sequence in $\mathcal C$, then there is another sequence $G_k\in \mathcal C$ such that

\begin{enumerate}
  \item[(i)] the algebra generated by $G_k$'s are independent;
  \item[(ii)] for all $k\geq 1$ $F_k\sim G_k$;
  \item[(iii)] if $(a_k)_{k\geq 1}$ is a sequence in $\mathbb R^+$ with $\sum_k a_k=1$, then there is a sequence  $H\in \mathcal C$ such that $h\sim \sum_{k=1}^\infty a_k G_k$.
\end{enumerate}

As in \cite{Bur64}, we say that $\mathcal C$ satisfies the finiteness
condition if for all $F=(f_n) \in \mathcal C$, there is a projection $p$ (in the ambiant algebra) with $\tau(p)>0$ and $\sup_n \|pf_np\|_\infty<\infty$.

\begin{theorem}
  If $\mathcal C$ is stochastically convex with the finiteness condition, then there is some $K>0$ such that for all $F\in \mathcal C$, $\|F\|_{\Lambda_{1}(\mathcal M_F;\ell_\infty)}\leq K$.
\end{theorem}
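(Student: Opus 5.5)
We argue by contradiction along the lines of Burkholder's proof of \cite[Theorem~1]{Bur64}, but with $\Phi_{\sup}$ in place of the scalar distribution function; Lemma~\ref{sup>inf} says this suffices. Define
\[
 M(\lambda):=\sup_{F\in\mathcal C}\lambda\,\Phi_{\sup}\Big(\mathop{\mathrm{sup}^{+}}\limits_n f_n>\lambda\Big),
\]
with the implicit normalization of $\mathcal C$ by $\tau(f_n)\le1$ (as in Burkholder). If the conclusion fails, then $M$ is unbounded. The selection argument of \cite[p.~78]{Bur64} then provides $a_k>0$ with $\sum_k a_k=1$, levels $\lambda_k\to\infty$, and sequences $F_k\in\mathcal C$ such that
\[
 \sum_k\Phi_{\sup}\Big(\mathop{\mathrm{sup}^{+}}\limits_n f_{k,n}>\lambda_k/a_k\Big)=\infty .
\]
As in the proof of Theorem~\ref{pro:stein extrapolation}, for each $k$ we choose pairwise orthogonal projections $e_{k,i}$ and indices $n_{k,i}$ with
\[
 e_{k,i}f_{k,n_{k,i}}e_{k,i}>(\lambda_k/a_k)e_{k,i},
 \qquad
 \sum_k\tau(e_k)=\infty,\quad e_k=\sum_ie_{k,i}.
\]

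Next we use stochastic convexity. Property (i)--(ii) replaces the $F_k$ by $G_k\sim F_k$ generating independent algebras $\mathcal M_{G_k}$. The trace-preserving isomorphisms carry the projections $e_{k,i}$ (taken inside $\mathcal M_{F_k}$, or after enlarging $\mathcal C$ suitably) to projections $e'_{k,i}\in\mathcal M_{G_k}$. Property (iii) gives $H\in\mathcal C$ with $H\sim\sum_ka_kG_k$, so we may work with $h_n=\sum_ka_kg_{k,n}$ directly. Positivity yields
\[
 h_n\ge a_kg_{k,n}\qquad\text{for all }k,n,
\]
hence $e'_{k,i}h_{n_{k,i}}e'_{k,i}\ge\lambda_ke'_{k,i}$.

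The main obstacle is the Borel--Cantelli step: we must show that, for every $\ell$, the projections $e'_k$ with $k\ge\ell$ cover $1$ (or at least the support of the finiteness projection). Here independence replaces the translations of Proposition~\ref{prop:borel-cantelli}. For independent subalgebras one has
\[
 \tau\Big(\bigwedge_{k\in J}e_k'^{\perp}\Big)\le\prod_{k\in J}\tau(e_k'^{\perp}).
\]
We would try to prove this by induction, using $\tau(p\wedge q)\le\tau(pq)$ and $\tau(pq)=\tau(p)\tau(q)$ whenever $p$ lies in the algebra generated by the earlier $\mathcal M_{G_j}$ and $q\in\mathcal M_{G_k}$. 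The catch is that $p\wedge q$ no longer lies in that earlier algebra, so at the inductive step we would instead control $\tau(p_1p_2\cdots p_m\cdots)$-type expressions. The cleanest route, if the tensor situation is available, is to assume the $\mathcal M_{G_k}$ commute (tensor-product independence); then all $e'_{k,i}$ with distinct $k$ commute, and the classical Borel--Cantelli lemma applies in the commutative algebra they generate.

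Finally we conclude as in Theorem~\ref{pro:stein extrapolation}. Fix a projection $p$ with $\tau(p)>0$ and $\sup_n\|ph_np\|_\infty=C<\infty$, which exists by the finiteness condition for $H$. Since $\bigvee_{k\ge\ell}e'_k=1$ and $\tau(p)>0$, for large $k$ some $e'_{k,i}$ must meet $p$ nontrivially. Passing to the commutative algebra $\mathcal D$ generated by the $e'_{k,i}$, via the conditional expectation $\mathbb E$ as before, we get
\[
 \sup_n\mathbb E(h_n)\ge\lambda_ke'_k\to\infty
\]
on a set of full measure, in the $\limsup$ sense. Compressing by $p$, or rather by $\mathbb E$ of a spectral projection of $p$, contradicts the bound $C$. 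Making this compatibility between $p$ and $\mathcal D$ precise is the second delicate point. We would handle it by choosing the projection $e$ in the definition of $\Phi_{\inf}$ and arguing on $\tau(e^\perp)$, rather than relying on commutation with $p$.
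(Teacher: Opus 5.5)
Your proposal has a genuine gap, and it is not where you locate it. The covering step is actually fine without any commutation. If $P$ is a projection in the von Neumann algebra generated by $\mathcal M_{G_1},\dots,\mathcal M_{G_k}$ and $q\in\mathcal M_{G_{k+1}}$, then $P\wedge q$ lies in the algebra generated by $\mathcal M_{G_1},\dots,\mathcal M_{G_{k+1}}$, which is independent of $\mathcal M_{G_{k+2}}$. So $\tau(P\wedge q)\le\tau(Pq)=\tau(P)\tau(q)$ iterates and yields $\bigvee_{k\ge\ell}e'_k=1$.

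What fails is turning this covering into a contradiction with the finiteness projection $p$ of $H$. Noncommutatively, $\bigvee_{k\ge\ell}e'_k=1$ gives no nonzero meet $e'_{k,i}\wedge p$: in $M_2$, two distinct lines have join $1$ while each meets a third line trivially. Even $e'_k\wedge p\neq0$ would not suffice, because large compressions of $h_n$ to the orthogonal pieces $e'_{k,i}$ do not force a large compression to a subprojection of $\sum_i e'_{k,i}$; cross terms can cancel. The conditional expectation onto the algebra $\mathcal D$ generated by the $e'_{k,i}$ does not help either, even if the $\mathcal M_{G_k}$ commute. The projection $p$ neither belongs to nor commutes with $\mathcal D$, and $\mathbb E(h_n)$ can be large everywhere on $\mathcal D$ while $ph_np=0$. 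This already happens in $M_2$ with $\mathcal D$ the diagonal, $h_n$ a multiple of the projection onto $(1,-1)/\sqrt2$, and $p$ the projection onto $(1,1)/\sqrt2$. Unlike in Theorem~\ref{pro:stein extrapolation}, there is no almost-everywhere finiteness to play against: the hypothesis only controls compressions by one fixed $p$. Your last sentence points toward $\Phi_{\inf}$, but it does not say which test projection to use or why its complement would have small trace, and that is the whole difficulty.

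The missing idea, which is how the paper argues, replaces Borel--Cantelli by Bessel's inequality applied to $p$ itself. Let $t=\tau(p)>0$, let $\mathbb E_k$ be the conditional expectation onto $\mathcal M_k=\mathcal M_{G_k}$, and set $f_k=\mathbb E_k(p)$. Independence makes the subspaces $L_2(\mathcal M_k)\ominus\mathbb C1$ mutually orthogonal, so $\sum_k\|f_k-t\|_2^2\le\|p-t\|_2^2<\infty$. By Chebyshev, the projections $p_k=1_{[t/2,\infty)}(f_k)\in\mathcal M_k$ then satisfy $\sum_k\tau(p_k^\perp)<\infty$. Now use three facts: $h_n\ge a_kg_{k,n}$, $\mathbb E_k$ is a contractive $\mathcal M_k$-bimodule map, and $f_k\ge\frac t2 p_k$ with $f_k,p_k$ commuting. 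Together they give
\[
 \|ph_np\|_\infty\ge\bigl\|(a_kg_{k,n})^{1/2}p\bigr\|_\infty^2\ge\bigl\|(a_kg_{k,n})^{1/2}f_k\bigr\|_\infty^2\ge\frac{t^2}{4}\,\|p_ka_kg_{k,n}p_k\|_\infty .
\]
To close the argument you must run the contradiction with $\Phi_{\inf}$, not $\Phi_{\sup}$. If $\sum_k\Phi_{\inf}\bigl(\mathop{\mathrm{sup}^{+}}_n a_kg_{k,n}>\lambda_k\bigr)=\infty$, then infinitely many $k$ satisfy $\tau(p_k^\perp)<\Phi_{\inf}\bigl(\mathop{\mathrm{sup}^{+}}_n a_kg_{k,n}>\lambda_k\bigr)$. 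For those $k$, $p_k$ is not admissible in \eqref{phi_inf}, so $\sup_n\|p_ka_kg_{k,n}p_k\|_\infty>\lambda_k\to\infty$, contradicting $\sup_n\|ph_np\|_\infty<\infty$. Your reduction to $\Phi_{\sup}$ via Lemma~\ref{sup>inf} works against you here. Divergence of $\sum_k\Phi_{\sup}$ is a weaker hypothesis, and orthogonal witnesses $e_{k,i}$ give no information about compressions by a projection such as $p_k$, which is dictated by $p$.
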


\begin{proof}
  As in  \cite{Bur64}, if the conclusion does not hold then we may find a sequence $\lambda_k\to \infty$, $a_k>0$ with $\sum a_k=1$ and $G_k=(g_{k,n})_n\in \mathcal C$ such that $\sum_{k=1}^\infty \Phi_{\rm inf}(\sup^+_na_kg_{k,n}>\lambda_k)=\infty$ and the $(G_k)$ are tensor independent.

  Let $\mathcal M_k= \mathcal M_{G_k}$ for $k\geq 1$, we can see them in a bigger $(\mathcal M,\tau)$ as they are independent. There are conditional expectations $\mathbb E_k : \mathcal M \to \mathcal M_k$. We will also use that
  independence forces the spaces
$L_2(\mathcal M_k)\cap 1^\bot$ to be orthogonal in $L_2(\mathcal M)$.

  Let $X=(x_n)_n=\sum_{k=1}^\infty
  a_kG_k$, that is $x_n=\sum_{k=1}^\infty a_kg_{k,n}$.
  To get a contraction with the finiteness condition, we
  want to show that for any projection $p\in \mathcal
  M$ with $\tau(p)=t>0$ we have $\sup_n \|px_np\|=\infty$.

 Set
  $f_k=\mathbb E_k(p)$, then $f_k-t$ has mean 0 and by independence
  $\sum_{k=1}^\infty \|f_k-t\|_2^2\leq \| p-t\|_2^2<\infty$.

  Put $p_k=1_{f_k\geq t/2}$ and $p_k^\bot=1-p_k=1_{t-f_k> t/2}$. We have $t^2/4\tau(p_k^\bot)\leq \|f_k-t\|^2$, thus $\sum_k \tau(p_k^\bot)<\infty$. This obviously implies that $I=\{k\geq 1 \;|\; \tau(p_k^\bot)<\Phi_{\rm inf}(\sup^+_na_kg_{k,n}>\lambda_k)\}$ is infinite.

  Now, since $x_n\geq a_kg_{k,n}$, we have
  \begin{equation*}\|x_n^{1/2}p\|_\infty\geq
  \|(a_kg_{k,n})^{1/2}p\|_\infty\geq \| \mathbb E_k((a_kg_{k,n})^{1/2}p)\|_\infty=\|(a_kg_{k,n})^{1/2}f_k\|_\infty.\end{equation*}
  But $f_k\geq t/2 p_k$ and they commute so $t/2p_k=f_kc$ for some contraction $c$
  and we conclude, for all $k,n\geq 1$
  \begin{equation}\label{normxn}\|px_np\|_\infty\geq t^2/4\|p_k a_k g_{k,n}p_k\|_\infty.\end{equation}

  Fix $k\geq 1$, if for all $n\geq 1$, we have
  $\|p_k a_k g_{k,n}p_k\|_\infty\leq \lambda_k$, then by definition of $\Phi_{\inf}$ in \eqref{phi_inf},  $\Phi_{\rm inf}(\sup^+_na_kg_{k,n}>\lambda_k)\leq \tau(p_k^\bot)$ and thus $k\notin I$. It follows that $\sup_n \|p_ka_k g_{k,n}p_k\|\geq \lambda_k$ for $k\in I$. Since $I$ is infinite and $\lambda_k\to \infty$, \eqref{normxn} yields $\sup_n \|px_np\|=\infty$ and the proof is over.
\end{proof}

\noindent\textbf{Acknowledgements.}
Part of this work was carried out during the first author’s visit to the University of Caen  in 2025, supported by a poste rouge from the CNRS. The first author would like to thank \'Eric Ricard and the Department of Mathematics for their warm hospitality. 
This work was partially supported by the National Natural Science
Foundation of China (grant nos.~12325105,
W2441002, and W2611005).

\noindent\textbf{AI statement.}
In addition to assisting with English-language polishing, the \LaTeX{} formatting, and preliminary checks of mathematical consistency, GPT-5.6 Sol suggested an approach to the treatment of \eqref{ai} for integer $r$ in terms of triangular truncation. This inspired the authors, but the precise formulation and the proof of the multilinear Doob's maximal inequality are given independently by the authors and do not rely on AI-generated arguments. Apart from this suggestion, all mathematical ideas, results, arguments, and proofs in the manuscript were developed independently by the authors without reliance on AI assistance. 

\bibliographystyle{amsplain}

\end{document}